\numberwithin{equation}{section}
\theoremstyle{definition}
\newtheorem{thm}{Theorem}[subsection]
\newtheorem{lem}[thm]{Lemma}
\newtheorem{dfn}[thm]{Definition}
\newtheorem{fct}[thm]{Fact}
\newtheorem{eg}[thm]{Example}
\newtheorem{rmk}[thm]{Remark}
\newtheorem{asm}[thm]{Assumption}
\newtheorem*{dfn*}{Definition}
\newtheorem*{cor*}{Corollary}
\newtheorem*{fct*}{Fact}
\newtheorem*{eg*}{Example}
\newcommand{\ve}{\varepsilon}
\newcommand{\ol}{\overline}
\newcommand{\wt}{\widetilde}
\newcommand{\sg}{\Sigma}
\newcommand{\sinf}{\tfrac{\infty}{2}}
\newcommand{\inj}{\hookrightarrow}
\newcommand{\surj}{\twoheadrightarrow}
\newcommand{\simto}{\xrightarrow{\sim}}
\newcommand{\longto}{\longrightarrow}
\newcommand{\longsimto}{\xrightarrow{\ \sim \ }}
\newcommand{\longinj}{\lhook\joinrel\longrightarrow}
\newcommand{\longsurj}{\relbar\joinrel\twoheadrightarrow}
\newcommand{\longbij}{\mathrel{\substack{\xrightarrow{\rule{1.5em}{0em}} \\[-.9ex] \xleftarrow{\rule{1.5em}{0em}}}}}
\newcommand{\deq}{\, = \, }
\newcommand{\seteq}{\, := \, }
\newcommand{\bbk}{\mathbbm{k}}
\newcommand{\bbN}{\mathbb{N}}
\newcommand{\bbZ}{\mathbb{Z}}
\newcommand{\one}{\mathbf{1}}
\newcommand{\catA}{\mathcal{A}}
\newcommand{\catB}{\mathcal{B}}
\newcommand{\catC}{\mathcal{C}}
\newcommand{\catD}{\mathcal{D}}
\newcommand{\catE}{\mathcal{E}}
\newcommand{\catM}{\mathcal{M}}
\newcommand{\cOp}{\mathcal{O}p}
\newcommand{\cCo}{\mathcal{C}oop}
\newcommand{\cFun}{\mathop{\mathcal{F}unc}\nolimits}
\newcommand{\cSet}{\mathcal{S}et}
\newcommand{\cMod}[2]{{\mathcal{M}od}^{#1}_#2}
\newcommand{\lMod}[1]{#1\text{-}\mathcal{M}od}
\newcommand{\dgkMod}{\mathrm{dg} \, \lMod{\bbk}}
\newcommand{\frkg}{\mathfrak{g}}
\newcommand{\id}{\mathrm{id}}
\newcommand{\fog}{\mathrm{fog}}
\newcommand{\Der}{\mathop{\mathrm{Der}}\nolimits}
\newcommand{\Hom}{\mathop{\mathrm{Hom}}\nolimits}
\newcommand{\Ind}{\mathop{\mathrm{Ind}}\nolimits}
\newcommand{\Ker}{\mathop{\mathrm{Ker}}\nolimits}
\newcommand{\Coker}{\mathop{\mathrm{Coker}}\nolimits}
\newcommand{\BD}[1]{\mathop{\mathrm{Bar}_{\Delta}^{#1}}\nolimits}
\newcommand{\opC}{\mathsf{C}}
\newcommand{\opF}{\mathsf{F}}
\newcommand{\opI}{\mathsf{I}}
\newcommand{\opP}{\mathsf{P}}
\newcommand{\opQ}{\mathsf{Q}}
\newcommand{\opU}{\mathsf{U}}
\newcommand{\opAsc}{\mathsf{Asc}}
\newcommand{\opCom}{\mathsf{Com}}
\newcommand{\opEnd}{\mathsf{End}}
\newcommand{\opLie}{\mathsf{Lie}}
\newcommand{\opBar}{\mathop{\mathsf{Bar}}\nolimits}
\newcommand{\opCob}{\mathop{\mathsf{Cobar}}\nolimits}
\newcommand{\op}{\text{op}}
\newcommand{\aug}{\text{aug}}
\newcommand{\coaug}{\text{coaug}}
\newcommand{\bcirc}{\mathbin{\overline{\circ}}}
\begin{document}

%%%%%%%%%%%%%%%%%%%%%%%% Authors & Paper data %%%%%%%%%%%%%%%%%%%%%%%%

\title{Operadic semi-infinite homology}

\author{Shintarou Yanagida}
\address{Graduate School of Mathematics, Nagoya University 
Furocho, Chikusaku, Nagoya, Japan, 464-8602.}
\email{yanagida@math.nagoya-u.ac.jp}

%\subjclass[2010]{05A15}
%\date{January 14--, 2018}
\date{March 30, 2018}

\begin{abstract}
We propose the notion of semi-infinite homology for algebras over operads
using the relative homology theory for operadic algebras.
\end{abstract}

\maketitle

\tableofcontents

%%%%%%%%%%%%%%%%%%%%%%%%%%%%%%%%%%%%%%%%%%%%%%%%%%%%%%%%%%%%%%%%%%%%%%
%%%%%%%%%%%%%%%%%%%%%%%%%%%%%%%%%%%%%%%%%%%%%%%%%%%%%%%%%%%%%%%%%%%%%%
\setcounter{section}{-1}
\section{Introduction}
\label{sect:intro}

The motivation of this note is to understand 
semi-infinite (co)homology for Lie algebras and associative algebras
with the viewpoint of operads.
Although there are already much literature on semi-infinite homology algebra,
we believe this note sheds new light on the topic.

Let us cite a concise explanation of semi-infinite homology from 
\cite[Introduction]{P1}:
\emph{Roughly speaking, the semi-infinite cohomology is defined for 
a Lie or associative algebra-like object which is split in two halves; 
the semi-infinite cohomology has the features of 
a homology theory (left derived functor) along one half of the variables
and a cohomology theory (right derived functor) along the other half}.

Semi-infinite homology for Lie algebras was first introduced by 
B.\ Feigin in \cite{Fe} in the 1980s.
Voronov proposed in \cite{V1} a homology algebraic treatment
(see also \cite{So,V2}).
For associative algebras,  Arkhipov \cite{A1,A2} first built 
the theory of semi-infinite homology.
Further study is done by Sevostyanov \cite{Se}.

Positselski constructed in \cite{P1} a huge theory using 
semi-infinite version of comodule-contramodule correspondence.
See also \cite{P2} for the theory of derived categories 
in his framework.

In this note 
we develop a relative homology theory for algebras over operads,
and use it to define a standard semi-injective resolution.
The theory of operadic relative (co)homology is built
with the help of cotriple homology, as explained in \S\ref{sect:rel-hom}.
The construction of standard semi-injective resolution 
is explained in \S\ref{sect:std}

Our construction is pretty simple, and may give a transparent 
understanding of various theories of semi-infinite (co)homology.
We plan to apply our method to the algebras 
in the category of Tate vector spaces in future.

%%%%%%%%%%%%%%%%%%%%%%%%%%%%%%%%%%%%%%%%%%%%%%%%%%%%%%%%%%%%%%%%%%%%%%
\subsection*{Notations}

$\bbN=\{0,1,2,\ldots\}$ denotes the set of non-negative integers.
For $n \in \bbZ_{>0}$, 
we denote by $\sg_n$ the $n$-th symmetric group,
and we set $\sg_0 := \sg_1 = \{e\}$, the trivial group.

For a category $\catA$, we write $X \in \catA$ in the meaning of 
$X$ being an object of $\catA$.
The set of morphisms $X \to Y$ for $X,Y \in \catA$ 
is denoted by $\catA(X,Y)$, or sometimes by $\Hom_{\catA}(X,Y)$.
$\cSet$ denotes the category of ($U$-small) sets 
(if we fix a universe $U$).

We use the word \emph{limit} in the meaning of inverse limit,
and \emph{colimit} in the meaning of direct limit.
The word \emph{dg} means differential $\bbZ$-graded.

Calligraphy symbols like $\catA,\catB,\ldots$ denote categories.
Sans-Serif symbols like $\opP,\opQ,\ldots$ denote operads.

%%%%%%%%%%%%%%%%%%%%%%%%%%%%%%%%%%%%%%%%%%%%%%%%%%%%%%%%%%%%%%%%%%%%%%
%%%%%%%%%%%%%%%%%%%%%%%%%%%%%%%%%%%%%%%%%%%%%%%%%%%%%%%%%%%%%%%%%%%%%%
\section{Operads in relative setting}
\label{sect:operad}

Following \cite[Chapters 1--3]{Fr}, 
we give a preliminary on operads in relative setting.
%We follow \cite{Fr} for the categorical setting.
Namely we work over a symmetric monoidal category $\catC$,
which will be called the base category.
Operads $\opP$ we will consider live in this base category $\catC$,
and algebras over $\opP$ will be defined in another category $\catE$ 
which is ``over $\catC$".

Following \cite[\S0.1]{Fr}, we put the following conditions 
on categories and functors.

\begin{asm}\label{asm:colimit}
On categories and functors we assume the following conditions.
\begin{itemize}
\item
Every category $\catA$ considered in this note 
contains a small subcategory $\catA_f$ such that 
every object $X \in \catA$ is the filtered colimit of a diagram of $\catA$.
\item
Every functor $F:\catA \to \catB$ 
preserves filtered colimits.
\end{itemize}
\end{asm}

The assumption on the existence of $\catA_f$ 
implies that a functor $\varphi: \catA \to \catB$ 
admits a right adjoint $\catB \to \catA$ if and only if 
$\varphi$ preserves colimits.
This observation will be used at Definition \ref{dfn:opEnd}
of the endomorphism operad.

%%%%%%%%%%%%%%%%%%%%%%%%%%%%%%%%%%%%%%%%%%%%%%%%%%%%%%%%%%%%%%%%%%%%%%
\subsection{Operads}
\label{subsec:operads}

We fix a symmetric monoidal category $\catC=(\catC,\otimes,\one)$
satisfying the following conditions.
\begin{enumerate}[label=C\arabic*]
\item
All small colimits and small limits exist.
\item
$\otimes: \catC \times \catC \to \catC$ 
commutes with all small colimits in each variable.
\end{enumerate}
This assumption implies that $\catC$ has the initial object,
which will be denoted by $0$.
We call $\catC$ the base category.

\begin{dfn*}
A \emph{$\sg_*$-object} in $\catC$ 
consists of a sequence $M=\{M(n)\}_{n\in \bbZ}$ 
of objects $M(n) \in \catC$ with a right $\sg_n$-action.
A \emph{morphism $M \to N$ between $\sg_*$-objects} consists of 
a sequence $\{M(n) \to N(n)\}_{n \in \bbZ}$ of morphisms in $\catC$ 
which commute with the $\sg_n$-actions.
We denote by $\catC^{\sg}$ 
the category of $\sg_*$-objects in $\catC$.
\end{dfn*}

The \emph{identity $\sg_*$-object} $\opI \in \catC^{\sg}$ 
is defined by 
\[
 \opI(n) = 
 \begin{cases}
  \one & (n = 1) \\
  0    & (n \neq 1)
 \end{cases}.
\]

%Let $\catF$ be the category of finite sets and bijections. 
%We associate to a $\sg_*$-object $M$ in $\catC$,
%a functor $\catF \to \catC$, which is also denoted by $M$,
%in the following way.
%For $J \in \catF$ with $|J|=r$, we set 
%\[
% M(J) := \otimes_{j: \{1,\ldots,r\} \to J} M(r)/\equiv.
%\]
%Here $j$ runs over all the bijections to $J$.
%The equivalence relation $\equiv$ is given by 
%$(x,j) \equiv (\sigma(x),j \circ \sigma)$ for $\sigma \in \sg_r$,
%where $\sigma(x)$ is the permutation of $x$ by $\sigma$
%and $j \circ \sigma$ is the composition of maps.
%For $u:J \to J'$ in $\catF$, we set $M(u)$ to be 
%the morphism $M(J) \to M(J')$ induced naturally by $u$ 
%as re-indexing.

The category $\catC^{\sg}$ has a natural monoidal structure
induced by the one in $\catC$.
Before explaining that, let us define the bifunctor 
$\otimes:\catC \times \cSet \to \catC$ by
\[
 C \otimes K := \otimes_{k \in K} C.
\]
Here $\cSet$ denotes the category of sets.
Since $\catC$ has a colimit by the condition C2, 
this definition makes sense.
Note also that it makes sense 
for any category $\catC$ with all small colimits.
Next let $G$ be a group with $m: G \otimes G \to G$ the multiplication map,
$M$ be a left $G$-module with $\lambda: G \otimes M \to M$ the action map, 
and assume that $C \in \catC$ has a right $G$-action $\rho$.
Then the tensor product $C \otimes G \otimes M$ makes sense.
Now we define $C \otimes_{G} M \in \catC$ to be the following coequalizer.
\[
 \xymatrix{
 C \otimes G \otimes M 
 \ar@<0.5ex>[rr]^{\id \otimes \lambda} \ar@<-0.5ex>[rr]_{\rho \otimes \id} 
 & & C \otimes M \ar[r] & C \otimes_{G} M.
 }
\]

We can now explain the monoidal structure on $\catC^{\sg}$.
For $M,N \in \catC^{\sg}$ we set
\[
 (M \otimes N)(n) :=  \bigoplus_{p+q=n} 
  \bigl(M(p) \otimes M(q)\bigr) \otimes_{\sg_p \times \sg_q} \sg_n.
\]
On the right hand side we used the coequalizer definition 
of $\otimes_{\sg_p \times \sg_q}$.
The left action of $\sg_p \times \sg_q$ on $\sg_n$ is defined as follows.
We consider $\sg_p \times \sg_q$ as a subgroup of $\sg_n$ 
by identifying $\sigma \in \sg_p$ with a permutation of 
$\{1,\ldots,p\} \subset \{1,\ldots,p,p+1,\ldots,n\}$
and by identifying $\tau \in \sg_q$ with a permutation of 
$\{p+1,\ldots,n\} \subset \{1,\ldots,p,p+1,\ldots,n\}$.
Then $\sg_p \times \sg_q$ acts on $\sg_n$ by translations on the right,
which is the desired action.
Now the right action of $\sg_n$ on itself makes $(M \otimes  N)(n)$ 
a right $\sg_n$-module.
Thus we have a $\sg_*$-object $M \otimes N = \{(M \otimes N)(n)\}_{n\in\bbN}$.
One can check that the triple
\[
 (\catC^{\sg}, \otimes, \opI)
\]
is a symmetric monoidal category.

The category $\catC^{\sg}$ %of $\sg_*$-objects in $\catC$
is equipped with another monoidal structure 
$(\catC^{\sg}, \circ, \opI)$, 
where the monoidal operation $\circ$ is defined by the following.

\begin{dfn}\label{dfn:circ}
For $M, N \in \catC^{\sg}$, define $M \circ N \in \catC^{\sg}$ by
\[
 (M \circ N)(r) \seteq
 \bigoplus_{k \ge 0} \bigl(M(k) \otimes N^{\otimes k} (r) \bigr)_{\sg_k}.
\]
In the right hand side the group $\sg_k$ acts diagonally on
$M(k) \otimes N^{\otimes k} (r)$,
where the action on $N^{\otimes k} (r)$ is the permutation of $k$ factors.
The symbol $X_{G}$ denotes the coinvariants 
in $X \in \catC$ with respect to the action of a group $G$.
\end{dfn}

Since we assume that $\catC$ has colimits, and that 
$\otimes$ commutes with colimits,
this definition makes sense and 
$M\circ N$ is actually a $\sg_*$-object.
More explicitly,
the $r$-th component $(M \circ N)(r)$ is given by 
\[
 (M \circ N)(r) = 
 \bigoplus_{k \ge 0} M(k) \otimes_{\sg_k} 
 \Bigl(
  \bigoplus \Ind^{\sg_r}_{\sg_{i_1} \times \cdots \times \sg_{i_k}}
  N(i_1) \otimes \cdots \otimes N(i_k)
 \Bigr).
\]
Here the second direct sum runs over 
$(i_1,\ldots,i_k) \in \bbN^k$ such that $i_1+\cdots+i_k=n$.
The right $\sg_k$-action on the right-hand side factor 
is the permutation of the $k$-tuple $(i_1,\ldots,i_k)$.
This presentation makes the natural $\sg_r$-action on $(M\circ N)(r)$ explicit.
See \cite[\S5.1.4]{LV} for a detailed explanation.

In a set-theoretic context, 
we denote an element of 
\[
 M(k) \otimes_{\sg_k} 
 \Ind^{\sg_r}_{\sg_{i_1} \times \cdots \times \sg_{i_k}}
 N(i_1) \otimes \cdots \otimes N(i_k) 
 \ \subset \  
 (M \circ N)(r)
\]
by the following notation \cite[\S5.1.7]{LV}.
\begin{equation}\label{eq:compos:elem}
 (m;n_1,\ldots,n_k;\sigma), \quad 
 m \in M(k), \ 
 n_j \in N(i_j), \ 
 \sigma \in Sh(i_1,\ldots,i_k).
\end{equation}
Here $Sh(i_1 \ldots,i_k)$ is the set of $(i_1 \ldots,i_k)$-shuffles.
In other words, $\sigma \in \sg_r$ is an $(i_1 \ldots,i_k)$-shuffle
if it satisfies 
\begin{align*}
&\sigma(1)<\sigma(2)<\cdots<\sigma(i_1), \ 
 \sigma(i_1+1)<\cdots<\sigma(i_1+i_2), \ \ldots,
\\
&\ldots, \ 
 \sigma(i_1+\cdots+i_{k-1}+1)<\cdots<\sigma(i_1+\cdots+i_k)=\sigma(r).
\end{align*}
The set $Sh(i_1 \ldots,i_k)$ is bijective to the quotient 
$(\sg_{i_1} \times \cdots \times \sg_{i_k})\backslash \sg_r$.

\begin{dfn*}
An \emph{operad in $\catC$} is a triple
\[
 (\opP,\mu,\eta)
\]
consisting of a $\sg_*$-object $\opP$ in $\catC$ 
and two morphisms of $\sg_*$-objects 
$\mu: \opP \circ \opP \to \opP$ and $\eta: \opI \to \opP$
making the following diagrams commutative.
\[
 \xymatrix{
  \opP \circ \opP \circ \opP 
   \ar[r]^(0.55){\mu \circ \opP} \ar[d]_{\opP \circ \mu} & 
  \opP \circ \opP \ar[d]^{\mu} \\
  \opP \circ \opP \ar[r]_{\mu} &
  \opP
 }
 \qquad
 \xymatrix{
  \opI \circ \opP \ar[r]^{\eta \circ \id} \ar[rd]_{\id} & 
  \opP \circ \opP \ar[d]^(0.45){\mu} & 
  \opP \circ \opI \ar[l]_(0.45){\id \circ \eta} \ar[ld]^{\id}  \\
  & \opP
 } 
\]
%(see \cite[\S5.2.1]{LV} for the detail, 
%where such a $\opP$ is called symmetric operad).
The morphism $\mu$ is called the \emph{composition map},
and $\eta$ is called the \emph{unit map}.
\end{dfn*}

One can see that the identity $\sg_*$-object $\opI$ 
has an obvious operad structure.
This operad is called \emph{the identity operad} 
and denoted by the same symbol $\opI$.

The composition map $\mu$ is encoded by the \emph{partial compositions}  
\begin{align}\label{eq:circ_i}
 \circ_i: \opP(r) \otimes \opP(s) \longto \opP(r+s-1)
\end{align}
for $1 \le i \le r$.
These are defined to be the composites
\begin{align*}
 \opP(r) \otimes \opP(s) 
 \longsimto
&\opP(r) \otimes 
 (\one \otimes \cdots \otimes \overset{\text{$i$-th}}{\opP(s)} 
       \otimes \cdots \otimes \one)
\\
 \longto
&\opP(r) \otimes 
 (\opP(1) \otimes \cdots \otimes \opP(s) \otimes \cdots \otimes \opP(1))
 \xrightarrow{\ \mu \ }
 \opP(r+s-1),
\end{align*}
where the second morphism is 
$\id \otimes (\eta \otimes \cdots \otimes \id \otimes \cdots \otimes \eta)$.
See \cite[\S5.3.4]{LV} for the equivalence of the definitions of 
operads in terms of the composition map $\mu$ and 
the partial composition maps $\circ_i$.

\begin{dfn*}
A \emph{morphism $\varphi: \opP \to \opQ$ between operads} in $\catC$ 
is defined to be a morphism of $\sg_*$-objects that preserves operad structures.
Such $\varphi$ will be called operad morphism.
We denote by $\cOp_{\catC}$ the category of operads in $\catC$.
\end{dfn*}

Let us also recall the notion of free operads.
The forgetful functor $\cOp_{\catC} \to \catC^{\sg}$ from 
operads to $\sg_*$-objects has a left adjoint
\[
 \opF: \catC^{\sg} \longto \cOp_{\catC}
\]
making each object $M$ to an operad $\opF(M)$ in $\catC$.

%In a set-theoretic context, the operad $\opF(M)$ consists of 
%all formal partial composites
%\[
% ( \cdots ((m_1 \circ_{e_2} m_2) \circ_{e_3} \cdots) \circ_{e_r} m_r
%\]
%of elements $m_1 \in M(n_1), \cdots, m_r \in M(n_r)$.

Explicitly, the underlying $\sg_*$-object of $\opF(M)$ 
is given by
\[
 \opF(M) \seteq \varinjlim \opF_n(M),
\]
where $\opF_n(M)$ is determined inductively by
\[
 \opF_0(M) \seteq \opI, \quad
 \opF_n(M) \seteq M \circ \opF_{n-1}(M),
\]
and the inclusion map $i_n:\opF_n(M) \inj \opF_{n+1}(M)$
is given by $i_1: \opI \inj \opI \oplus M$, inclusion in the first factor,
and  $i_n := \id \oplus (\id \circ i_{n-1})$.
Any induced inclusion map $\opF_m(M) \inj \opF_n(M)$ is denoted by $i$.
We also have the inclusion map $M \circ \opF_{n-1}(M) \inj \opF_n(M)$
into second factor.
The induced map $M \inj \opF(M)$ is denoted by $j$.

The composition map $\mu$ on $\opF(M)$ is determined by
$\mu_{m,n}: \opF_m(M) \circ \opF_n(M) \to \opF_{m+n}(M)$,
and we construct $\mu_{m,n}$ inductively.
First we set $\mu_{0,n}:= \id$ on $\opF_0(M) \circ \opF_n(M) \to \opF_n$.
Then we define $\mu_{m,n}$ to be the following composite.
\begin{align*}
 \opF_m(M) \circ \opF_n(M)
&\longsimto 
 \opF_n(M) \oplus \bigl(M \circ (\opF_{m-1}(M) \circ \opF_n(M)) \bigr) \\
&\longsimto
 \opF_n(M) \oplus \bigl((M \circ \opF_{m-1}(M)) \circ \opF_n(M) \bigr)
\\
&\xrightarrow{\, (\id,\id \circ \mu_{m-1,n}) \, }
 \opF_n(M) \oplus M \circ \opF_{m+n-1}(M)
 \xrightarrow{\, i+j \, }
 \opF_{m+n}(M).
\end{align*}

We can also define the unit map $\eta:\opI \to \opF(M)$ to be the composite 
$\opI \simto \opF_0(M) \inj \opF(M)$.
The triple $(\opF(M),\mu,\eta)$ obtained in this way is indeed an operad.
See \cite[\S5.5.1]{LV} for the proof.

\begin{dfn}\label{dfn:free-operad}
The operad $\opF(M)$ is called the \emph{free operad} associated to $M$.
\end{dfn}

The natural projection $\ve:\opF(M) \to \opI$ is an \emph{augmentation map}
of the free operad $\opF(M)$, i.e., 
$\ve$ is a morphism of operads to the identity operad
satisfying the relation $\ve \eta = \id$.
Thus $\opF(M)$ is an augmented operad in the following sense.

\begin{dfn}\label{dfn:augmented-operad}
An operad $\opP$ is called \emph{augmented} 
if it has the augmentation map $\ve:\opP \to \opI$.
The kernel $\Ker(\ve)$ is called the \emph{augmentation ideal}
and denoted by $\opP_{\aug}$.
\end{dfn}

Concrete examples of operads are constructed via generators and relations.
Let $M,F \in \catC^{\sg}$ be $\sg_*$-objects.
Then we can define an operad $\opP$ in $\catC$ to be a coequalizer 
of the form 
\[
 \xymatrix{
 \opF(R) \ar@<0.5ex>[r]^{d_0} \ar@<-0.5ex>[r]_{d_1} & 
 \opF(M) \ar[r] & \opP.
 }
\]

In the classical setting like $\catC = \lMod{k}$, 
the category of $k$-modules over a commutative ring $k$,
one usually take $R \subset \opF(M)$ 
and consider $\opP :=\opF(M)/(R)$.
Here $(R)$ denotes the ideal generated by $R$ in the operadic sense,
and it is spanned by composites which include a factor of the form 
$w \rho$ with $w \in \sg_n$ and $\rho \in R(n)$.
In terms of the coequalizer definition, 
one takes $d_1=0$ to get this operad $\opP$.
See \cite[Chapter 5]{LV} for the detail.

Closing this subsection,
let us recall the classical examples of operads, 
namely the associative, commutative and Lie operads.

\begin{eg}\label{eg:classical-operads}
Let $\catC = (\lMod{k},\otimes,k)$ be the category of $k$-modules 
over a commutative ring $k$
with $\otimes=\otimes_k$ the ordinary tensor product.
Then
\begin{enumerate}
\item 
The commutative operad $\opCom$ is 
\[
 \opCom = \opF(k \mu)/(\tau \mu - \mu, \mu \circ_1 \mu - \mu \circ_2 \mu).
\]
Thus the generating $\sg_*$-object 
$M=k \mu$ is spanned by a binary operation $\mu=\mu(x_1,x_2)$, 
and as a $k$-module $F(k \mu)$ is isomorphic to the tensor algebra
$\oplus_{n\ge0} (k \mu)^{\otimes n}$.
The defining relations are given by two elements 
$\tau \mu - \mu \in \opF(k \mu)(2)$ and 
$\mu \circ_1 \mu- \mu \circ_2 \mu \in \opF(k \mu)(3)$.
Here $\tau=(1,2)$ denotes the permutation.
Thus the relation means the commutativity 
$\mu(x_1,x_2)=\mu(x_2,x_1)$ 
and the associativity
$\mu(\mu(x_1,x_2),x_3)=\mu(x_1,\mu(x_2,x_3))$ 
of the multiplication $\mu$.

\item
The associative operad $\opAsc$ is 
\[
 \opCom = \opF(k \mu \oplus k \tau \mu)/(\mu \circ_1 \mu - \mu \circ_2 \mu).
\]

\item
The Lie operad $\opLie$ is
\[
 \opLie = \opF(k \mu)/(\mu+\tau\mu, (1+c+c^2)\mu \circ_1 \mu).
\]
Here we denoted $c=(1,2,3) \in \sg_3$.
The first relation means the anti-commutativity of the operation 
$\mu=\mu(x_1,x_2)$,
and the second means the Jacobi identity.
\end{enumerate}
We also have a sequence of morphisms of operads
\begin{equation}\label{eq:classical-operads-seq}
 \opLie \longto \opAsc \longto \opCom.
\end{equation}
The first morphism is induced by the natural embedding 
of $\sg_n$-modules $\opLie(n)$ 
into the regular $\sg_n$-modules $\opAsc(n)$.
The second one is induced by the natural quotient.
\end{eg}

%%%%%%%%%%%%%%%%%%%%%%%%%%%%%%%%%%%%%%%%%%%%%%%%%%%%%%%%%%%%%%%%%%%%%%
\subsection{Algebras over operads}

Let us fix a symmetric monoidal category $\catC$
with the same assumptions as in the previous subsection.
Following \cite[Chapter 1]{Fr} we introduce 
the notion of symmetric monoidal category over $\catC$

\begin{dfn}\label{dfn:E:over:C}
A \emph{symmetric monoidal category $\catE$ over $\catC$} is 
%a symmetric monoidal category 
$\catE=(\catE,\otimes,\one)$ 
together with a bifunctor $\otimes: \catC \times \catE \to \catE$
such that 
\begin{enumerate}[label=E\arabic*]
\item
$\one_{\catC} \otimes X \simeq X$ for all $X \in \catE$,
\item
$(C \otimes D) \otimes X \simeq C \otimes (D \otimes X)$
for all $C,D \in \catC$ and $X \in \catE$,
\item
$C \otimes (X \otimes Y) \simeq (C \otimes X) \otimes Y
 \simeq X \otimes (C \otimes Y)$ 
for all $C \in \catC$ and $X,Y \in \catE$.
\end{enumerate}
At the first condition we wrote $\one_{\catC}$ 
the unit object of the monoidal category 
$\catC=(\catC,\otimes,\one_{\catC})$.
This bifunctor $\otimes: \catC \times \catE \to \catE$ 
is called the \emph{external tensor product},
and $\otimes: \catE \times \catE \to \catE$ 
is called the \emph{internal tensor product}.
We also assume the following conditions.
\begin{enumerate}[label=E\arabic*]
\setcounter{enumi}{3}
\item
All small colimits and small limits exist in $\catE$.
\item
The internal tensor product $\otimes: \catE \times \catE \to \catE$
commutes with small colimits in each variable.
\item
The external tensor product $\otimes: \catC \times \catE \to \catE$
commutes with small colimits in each variable.
\end{enumerate}
\end{dfn}

A symmetric monoidal category $\catE$ will be the place 
where an algebra over an operad lives.
For the definition of algebras over operads, we need one more preparation.

\begin{dfn*}
For a $\sg_*$-object $P$ in $\catC$ and an object $E \in \catE$,
we define %the functor $S(P,-):\catE \to \catE$ by 
\[
 S(P,E) := 
 \bigoplus_{n \ge 0} \bigl(P(n) \otimes E^{\otimes n}\bigr)_{\sg_n} 
 \, \in \catE.
\]
\end{dfn*}

Here we used the internal tensor product to form $E^{\otimes n}$,
and used the external tensor product to form $P(n) \otimes E^{\otimes n}$.
The existence of colimits assures that the coinvariants
$(P(n) \otimes X^{\otimes n})_{\sg_n}$ is well-defined.
We see from the definition that 
if $\catE=\catC$ and the external and internal tensor products coincide,
then $S(P,E) = P \circ E$.

The operation $P \to S(P,-)$ has the following nice property.
Let $\cFun(\catE,\catE)$ be the category of functors on $\catE$.

\begin{fct*}
The operation $P \to S(P,-)$ defines a functor
\[
 (\catC^{\sg_*},\circ,\opI) \longto (\cFun(\catE,\catE),\circ,\id)
\]
of monoidal categories.
Here the $\circ$ in the target denotes the composition of functors.
\end{fct*}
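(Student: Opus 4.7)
The plan is to verify the three ingredients of a monoidal functor: (a) functoriality of $P \mapsto S(P,-)$ as an assignment $\catC^{\sg} \to \cFun(\catE,\catE)$; (b) the unit condition $S(\opI,-) \simeq \id_{\catE}$; and (c) a natural isomorphism $S(P \circ Q,-) \simeq S(P,-) \circ S(Q,-)$ satisfying the monoidal coherences. Functoriality is immediate: a morphism $f:P \to P'$ in $\catC^{\sg}$ is a collection of $\sg_n$-equivariant maps $f(n):P(n) \to P'(n)$, and tensoring with $E^{\otimes n}$ followed by passing to $\sg_n$-coinvariants produces the induced map $S(P,E) \to S(P',E)$, naturally in $E$. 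The unit condition is also immediate: since $\opI(n)$ is initial for $n \neq 1$, $\opI(1) = \one_{\catC}$, and $\sg_1$ is trivial, axiom E1 yields $S(\opI,E) = \one_{\catC} \otimes E \simeq E$.

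The substance of the argument is (c). I would first expand
\[
 S(P, S(Q,E))
 = \bigoplus_{n \ge 0} \bigl(P(n) \otimes S(Q,E)^{\otimes n}\bigr)_{\sg_n},
\]
then use E5 and E6 (colimits commuting with internal and external tensor products in each variable) to distribute the internal tensor over the direct sum appearing in $S(Q,E)^{\otimes n}$ and to pull the $\sg_{k_i}$-coinvariants outside. Axiom E3 lets one interchange external and internal tensor factors, so the result is naturally identified with a direct sum, indexed by $n$ and by tuples $(k_1,\ldots,k_n)$ with $r:=k_1+\cdots+k_n$, of objects
\[
 \bigl(P(n) \otimes Q(k_1) \otimes \cdots \otimes Q(k_n) \otimes E^{\otimes r}\bigr)_{\sg_{k_1} \times \cdots \times \sg_{k_n}},
\]
followed by $\sg_n$-coinvariants for the block-permutation action on both the $Q(k_i)$-factors and the corresponding blocks of $E^{\otimes r}$. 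For $S(P \circ Q,E)$, I would insert the explicit formula for $(P \circ Q)(r)$ recalled after Definition \ref{dfn:circ}, apply E6 to move $\sg_r$-coinvariants past the external tensor with $E^{\otimes r}$, and then invoke the standard identity $(\Ind^{\sg_r}_{H} X)_{\sg_r} \simeq X_H$ for $H = \sg_{k_1}\times\cdots\times\sg_{k_n} \subset \sg_r$ to collapse the induction. Both sides then agree term by term, and naturality in $P$, $Q$, $E$ is built into the construction.

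Monoidal coherence — compatibility with the associators of $\circ$ on each side and with the left and right unit constraints — is then a formal consequence, reducing to the associativity and unit isomorphisms for $\otimes$ in $\catC$ and $\catE$ together with the standard associativity coherence of $\circ$ established in \cite[\S5.1.4]{LV}. The main obstacle is the combinatorial bookkeeping in step (c): one must track how each $\sg_n$ permutes tensor factors compatibly with the induction-coinvariance identity and with the E3-rearrangement of external and internal factors. Assumptions E3, E5, E6 are precisely what make each such manipulation legitimate, but verifying the coherence diagrams without error requires careful attention to the shuffle combinatorics implicit in the presentation \eqref{eq:compos:elem}.
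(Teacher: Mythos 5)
Your argument is correct and is essentially the standard one: the paper itself states this as an unproved Fact, deferring to \cite[Chapter 1--3]{Fr} (see also \cite[\S 5.1.4]{LV}), and your three-step verification (functoriality, the unit isomorphism $S(\opI,E)\simeq E$ via E1, and the composition isomorphism $S(P\circ Q,E)\simeq S(P,S(Q,E))$ via distributing tensor powers over coproducts and collapsing the induction) is exactly the proof found in those references. The only point worth tightening is the induction step: what you actually need is the projection formula $\bigl(\Ind^{\sg_r}_{H}(X)\otimes E^{\otimes r}\bigr)_{\sg_r}\simeq\bigl(X\otimes E^{\otimes r}\bigr)_{H}$ for $H=\sg_{k_1}\times\cdots\times\sg_{k_n}$, rather than the bare identity $(\Ind^{\sg_r}_{H}X)_{\sg_r}\simeq X_H$, and for the unit step one should note that $0\otimes E\simeq 0$ follows from E6 since the initial object is an empty colimit; with those precisions the proof is complete.
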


\begin{dfn*}
Let $\opP$ be an operad in $\catC$.
A \emph{$\opP$-algebra in $\catE$} is a pair 
\[
 (A,\mu_A) 
\]
consisting of an object $A \in \catE$
and a morphism $\mu_A: S(\opP,A) \to A$ in $\catE$ 
making the following diagrams commutative.
\begin{align*}
 \xymatrix{
  S(\opP \circ \opP,A) \ar[r]^(0.45){\sim} \ar[d]_{S(\mu,A)} & 
  S(\opP,S(\opP,A)) \ar[rr]^(0.6){S(\opP,\mu_A)} & & 
  S(\opP,A) \ar[d]^{\mu_A} \\
  S(\opP,A) \ar[rrr]_{\mu_A} & & & A 
 } 
 \qquad
 \xymatrix{
  S(\opI,A) \ar[r]^{S(\eta,\id)} \ar[dr]_{\sim}& 
  S(\opP,A) \ar[d]^{\mu_A} \\
  & A
 }
\end{align*}
The morphism $\mu_A$ is called the \emph{evaluation map}.
%See \cite[\S3.2.1]{Fr} for the detail.
We denote such $(A,\mu_A)$ simply by $A$.
\end{dfn*}

More explicitly,
$\mu_A$ consists of a collection of morphisms
$\opP(n) \otimes A^{\otimes n} \to A$
which are equivariant with respect to the $\sg_n$-actions,
and which also satisfy the unit and associative conditions 
in terms of operad actions.

One can restate $\opP$-algebra structures on $A \in \catE$ 
in terms of endomorphism operads.
See \cite[\S5.2.11]{LV} for an explanation in the non-relative setting.
{}To explain it in the relative setting, 
let us introduce the endomorphism operad 
$\opEnd_A$ for $A \in \catE$ following \cite[\S3.4]{Fr}.

Recall the assumption that the external tensor product 
$\otimes: \catC \times \catE \to \catE$ preserves colimits.
Then the argument after Assumption \ref{asm:colimit} 
says that there is  a bifunctor
\[
 \Hom_{\catE}: 
 \catE^{\op} \times \catE \longto \catC
\]
such that for any $C \in \catC$ and $X,Y \in \catE$ we have 
\begin{equation}\label{eq:HomE}
 \catE(C \otimes X, Y) \deq \catC(C,\Hom_{\catE}(X,Y)).
\end{equation}

\begin{dfn}\label{dfn:opEnd}
Let $X \in \catE$.
The \emph{endomorphism operad} $\opEnd_X$ of $X$ 
is the $\sg_*$-object in $\catC$ consisting of 
\[
 \opEnd_X(r) := \Hom_{\catE}(X^{\otimes r}, X),
\] 
where $\sg_r$-acts  
by permuting the tensor components of $X^{\otimes r}$,
together with the composition map $\mu$ 
given by the natural composition of endomorphisms.
\end{dfn}

Using the set-theoretic symbol \eqref{eq:compos:elem},
we can write the action of $\mu$ as 
\[
 \mu(f;f_1,\ldots,f_k;1) = f(f_1 \otimes \cdots \otimes f_k)
\]
for $f \in \Hom_{\catE}(X^{\otimes k},X)$ 
and $f_j \in \Hom_{\catE}(X^{\otimes i_j},X)$.
%For $f \in \Hom_{\catE}(X^{\otimes 0},X)$, we put $f$ to be 
In the case $\sigma \neq \id$, 
the right $\sigma_r$-action and 
the symmetric monoidal structure of $\catE$ over $\catC$
determines $\mu(f;f_1,\ldots,f_k;\sigma)$. 
%from $\mu(f;f_1,\ldots,f_k;\id)$.

Now we have the well-known equivalence
between $\opP$-algebra structures on a given object $A$ and 
operad morphisms between $\opP$ and $\opEnd_A$.

\begin{fct}[{\cite[Proposition 3.4.3]{Fr}}]
\label{fct:P-alg=op-mor}
For any $A \in \catE$,
there is a one-to-one correspondence between operad morphisms 
$\opP \to \opEnd_{A}$ and $\opP$-algebra structures on $A$.
\end{fct}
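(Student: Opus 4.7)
The plan is to unpack a $\opP$-algebra structure $\mu_A:S(\opP,A)\to A$ into a family of morphisms via the adjunction \eqref{eq:HomE}, and then to match the two axioms of a $\opP$-algebra with the two axioms for an operad morphism $\opP\to\opEnd_A$. Everything reduces to checking that each translation is a bijection and that the diagrams correspond on the nose.

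First I would decompose the data of $\mu_A$. Since $S(\opP,A)=\bigoplus_{n\ge 0}(\opP(n)\otimes A^{\otimes n})_{\sg_n}$ is a colimit in $\catE$, giving $\mu_A$ is the same as giving, for each $n\ge 0$, a $\sg_n$-equivariant morphism $\mu_A^{(n)}:\opP(n)\otimes A^{\otimes n}\to A$ in $\catE$ (with trivial $\sg_n$-action on the target). Next I would pass through the adjunction \eqref{eq:HomE} with $C=\opP(n)$, $X=A^{\otimes n}$, $Y=A$, converting each $\mu_A^{(n)}$ into a morphism $\wt\mu_A^{(n)}:\opP(n)\to\opEnd_A(n)$ in $\catC$. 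Naturality of the adjunction, together with the fact that the $\sg_n$-action on $\opEnd_A(n)=\Hom_{\catE}(A^{\otimes n},A)$ is induced by permutation of the tensor factors of $A^{\otimes n}$, matches the $\sg_n$-equivariance on the two sides. The family $\{\wt\mu_A^{(n)}\}_n$ then assembles into a morphism of $\sg_*$-objects $\wt\mu_A:\opP\to\opEnd_A$, and every step in this passage is individually a bijection.

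It then remains to verify that the two axiom diagrams defining a $\opP$-algebra correspond, under this bijection, precisely to $\wt\mu_A$ being a morphism of operads. The unit axiom is immediate: the right-hand diagram reduces to $\mu_A^{(1)}\circ(\eta\otimes\id)$ being the canonical isomorphism $\one\otimes A\simto A$, whose adjunct is exactly the condition $\wt\mu_A\circ\eta_{\opP}=\eta_{\opEnd_A}$. For the associativity axiom, the key tool is the Fact stated just before Definition \ref{dfn:opEnd}, which says that $P\mapsto S(P,-)$ is a monoidal functor from $(\catC^{\sg},\circ,\opI)$ to $(\cFun(\catE,\catE),\circ,\id)$. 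This produces a natural isomorphism $S(\opP\circ\opP,A)\simto S(\opP,S(\opP,A))$ sitting inside the left-hand diagram. Passing through the adjunction componentwise, and using that the composition map of $\opEnd_A$ is the literal composition of $\catE$-endomorphisms from Definition \ref{dfn:opEnd}, this diagram is equivalent to the equality $\wt\mu_A\circ\mu_{\opP}=\mu_{\opEnd_A}\circ(\wt\mu_A\circ\wt\mu_A)$ of morphisms $\opP\circ\opP\to\opEnd_A$, which is exactly the compatibility demanded of an operad morphism.

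The hard part will be this last associativity step, because the isomorphism $S(\opP\circ\opP,A)\simto S(\opP,S(\opP,A))$ conceals nontrivial $\sg_*$-equivariance inside the coinvariants and the induction used to define $\opP\circ\opP$. I would handle it in one of two equivalent ways: either work with the partial compositions $\circ_i$ of \eqref{eq:circ_i}, which reduce the verification to a single insertion and bypass most of the combinatorics, or, in a set-theoretic setting, test both sides on a typical generator $(p;q_1,\ldots,q_k;\sigma)$ of $(\opP\circ\opP)(r)$ as in \eqref{eq:compos:elem} and check that the two resulting morphisms $A^{\otimes r}\to A$ agree. Once these agree, bijectivity of the overall correspondence is automatic, since each stage (decomposition into $\sg_n$-equivariant pieces, adjunction, assembly as a $\sg_*$-morphism) is itself a bijection.
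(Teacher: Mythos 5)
Your proposal is correct and follows essentially the same route as the paper: the paper's proof is exactly the componentwise application of the adjunction \eqref{eq:HomE} to identify $\catC^{\sg}(\opP,\opEnd_A)$ with $\catE(S(\opP,A),A)$, with the matching of the unit and associativity axioms left implicit. Your additional discussion of how the two axiom diagrams correspond under this bijection only fills in details the paper omits with ``which yields the statement.''
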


\begin{proof}
By  \eqref{eq:HomE} we have 
$\catC(\opP(n),\opEnd_A(n)) = \catE(P(n)\otimes A^{\otimes n},A)$.
Thus we have 
\[
 \catC^{\sg}(\opP,\opEnd_{A}) \deq 
%\oplus_{n\ge0}\catC(\opP,\opEnd_A(n)) \deq
 \catE\bigl(\oplus_{n\ge0} (\opP(n) \otimes A^{\otimes n})_{\sg_n},A \bigr) 
 \deq \catE(S(P,A),A),
\]
which yields the statement.
\end{proof}

\begin{dfn}\label{dfn:P-alg:hom}
A \emph{morphism $f: A \to B$ between $\opP$-algebras} $A$ and $B$ in $\catE$
is a morphism in $\catE$ commuting with the evaluation maps $\mu_A$ and $\mu_B$.
In other words, the following diagram commutes.
\begin{align}\label{diag:P-alg:hom}
 \xymatrix{
  S(\opP,A) \ar[r]^(0.6){\mu_A} \ar[d]_{S(\id,f)} & A \ar[d]^{f} \\
  S(\opP,B) \ar[r]_(0.6){\mu_B} & B
 }
\end{align}
The category of $\opP$-algebras in $\catE$ is denoted by ${}_{\opP}\catE$.
\end{dfn}

\begin{eg}\label{eg:classical-op-alg}
Algebras over the classical operads in Example \ref{eg:classical-operads}
are commutative, associative and Lie algebras in the ordinary meaning.
Precisely speaking, setting $\catE = \catC$ to be the category of $k$-modules
over a commutative ring $k$, we have the followings.
\begin{enumerate}
\item
A $\opCom$-algebra $C$ is a commutative $k$-algebra $(C,\mu)$.
\item
An $\opAsc$-algebra $A$ is an associative $k$-algebra $(A,\mu)$.
\item
A $\opLie$-algebra $\frkg$ is a Lie algebra $(\frkg, [\cdot , \cdot]=\mu)$
over $k$.
\end{enumerate}
The sequence \eqref{eq:classical-operads-seq} says that 
for $A \in \catC$
a commutative algebra structure on $A$ naturally gives 
an associative algebra structure,
and an associative algebra structure on $A$ gives rise to 
a Lie algebra structure by setting the Lie bracket to be the commutator.
\end{eg}

%%%%%%%%%%%%%%%%%%%%%%%%%%%%%%%%%%%%%%%%%%%%%%%%%%%%%%%%%%%%%%%%%%%%%%
\subsection{Modules of algebras over operads}

Following \cite[\S4.2]{Fr},
we introduce modules over a $\opP$-algebra 
in the relative setting.
Note that in \cite{Fr} they are called 
representations of a $\opP$-algebra.
See \cite[\S12.3.1]{LV} for the non-relative setting.
%where such representations are called modules of algebras over operads.

Fix $\catC$ and $\catE$ as in the previous subsections.
Recall that $\catC^{\sg}$ denotes 
the category of $\sg_*$-objects in $\catC$.
For $P \in \catC^{\sg}$ and $M,N \in \catE$,
define $S(P,M;N) \in \catE$ to be 
\[
 S(P,M;N) := 
  \bigoplus_{k \ge 0} \Bigl( P(k) \otimes 
  \bigl(\bigoplus_{1 \le i \le k} M \otimes \cdots \otimes 
   \overset{\text{$i$-th}}{N} \otimes \cdots \otimes M \bigr) \Bigr)_{\sg_k}
\] 
where in the right hand side $M$ appears $k-1$ times.
The symbol $(-)_{\sg_k}$ denotes the coinvariants as before.
Thus we have $S(P,M;M)=S(P,M)$.
Note that for $P,Q \in \catC^{\sg}$ and $M,N \in \catE$
we have a natural isomorphism
\[
 S(P,S(Q,M);S(Q,M;N)) \simeq
 S(P \circ Q, M;N).
\]
We also have an isomorphism
\[
 S(\opI,M;N) = \one \otimes N \simeq N
\]
since $\opI(1)=\one$ and $\opI(n)=0$ for $n \neq 1$.

\begin{dfn}\label{dfn:mod-P-alg}
Let $\opP=(\opP,\mu,\eta)$ be an operad in $\catC$,
and $A=(A,\mu_A)$ be a $\opP$-algebra in $\catE$.
%A \emph{representation of $A$} (over $\opP$ in $\catE$) or an
An \emph{$A$-module} (over $\opP$ in $\catE$) is a pair %triple
\[
 (E,\lambda_E) %\eta_E
\]
consisting of $E \in \catE$ and a morphism
$\lambda_E: S(\opP,A;E) \to E$ %and $\eta_E: E \to S(\opP,A;E)$
in $\catE$ making the following diagrams commutative.
\begin{align*}
&\xymatrix{
 S(\opP \circ \opP,A;E) \ar[d]_{S(\mu,\id;\id)} &
 S(\opP,S(\opP,A);S(\opP,A;E)) 
  \ar[l]_(0.57){\sim} \ar[rr]^(0.61){S(\id,\mu_A;\lambda_E)} & & 
 S(\opP,A;E) \ar[d]^{  \lambda_E} \\
 S(\opP,A;E) \ar[rrr]_{\lambda_E} & & & E
}
\\ %\quad
&\hskip 9em
 \xymatrix{
 S(\opI,A;E) \ar[rr]^{S(\eta,\id;\id)} \ar[rrd]_{\sim} & & 
 S(\opP,A;E) \ar[d]^{ \lambda_E} \\
 & & E
}
\end{align*}
The morphism $\lambda_E$ is called \emph{the action map}.
We denote such $(E,\lambda_E)$ simply by $E$.

A \emph{morphism $f: E \to E'$ of $A$-modules} is 
a morphism in $\catE$ commuting with the action maps 
$\lambda_E$ and $\lambda_{E'}$.

We denote by $\cMod{\opP}{A}$ the category of $A$-modules in $\catE$.
\end{dfn}

Let us explain a universal construction of $A$-modules,
namely the construction of \emph{free $A$-module} generated by an object in $\catE$.

Given $M \in \catE$
we define $A \otimes^{\opP} M \in \catE$ to be the coequalizer 
\[
 \xymatrix{
  S(\opP,S(\opP,A);M) 
  \ar@<0.5ex>[r]^(0.55){\wt{\mu}} \ar@<-0.5ex>[r]_(0.55){\wt{\mu}_A} &
  S(\opP,A;M) \ar[r] & A \otimes^{\opP} M
 },
\]
where $\wt{\mu}$ and $\wt{\mu}_A$ are given by 
\[
 \wt{\mu}: \,
 S(\opP,S(\opP,A);M) \longsimto S(\opP \circ \opP,A;M) 
 \xrightarrow{\, S(\mu,\id;\id) \,} S(\opP,A;M)
\]
and
\[
 \wt{\mu}_A: \, 
 S(\opP,S(\opP,A);M) \xrightarrow{\, S(\id,\mu_A;\id) \,} S(\opP,A;M).
\]
$A \otimes^{\opP} M$ is actually an $A$-module.
The $A$-action on $A \otimes^{\opP} M$ is given by the following fact: 
the composite 
\[
 S(\opP,A;S(\opP,A;M)) \longto S(\opP \circ \opP,A;M)
 \xrightarrow{\, S(\mu,\id;\id) \, } S(\opP,A:M)
\]
factors through the quotient
$S(\opP,A;M) \surj A \otimes^{\opP}M$.
See \cite[Lemma 12.3.3]{LV} for the proof.

\begin{dfn}\label{dfn:free-A-mod}
Let $A \in {}_{\opP}\catE$ and $M \in \catE$.
The $A$-module $A \otimes^{\opP}M$ is called 
%\emph{the free representation of $A$}, or 
\emph{the free $A$-module} generated by $M$. %over $\opP$.
\end{dfn}

%Now let us state the adjoint property again.
One can also check that 
\[
 A \otimes^{\opP} - : \, \catE \longto \cMod{\opP}{A}
\]
is a functor, and moreover it is a left adjoint of 
the forgetful functor $\fog: \cMod{\opP}{A} \to \catE$.
In other words, we have 

\begin{fct*}[{\cite[Theorem 12.3.4]{LV}}]
For %any operad $\opP$ and 
any $A \in {}_{\opP}\catE$, %$\opP$-algebra $A$ in $\catE$,
the two functors
\[
 A \otimes^{\opP} -: \, 
 \catE \longbij \cMod{\opP}{A} \, :\fog
\]
form an adjoint pair.
Thus for any $M \in \cMod{\opP}{A}$ and $N \in \catE$ we have
\[
 \cMod{\opP}{A}(A \otimes^{\opP}M,N) = 
 \catE(M,\fog(N)).
\]
\end{fct*}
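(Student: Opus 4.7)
The plan is to construct the adjunction bijection explicitly by pushing a morphism through the coequalizer presentation of $A \otimes^{\opP} M$, and to match its universal property with the defining condition on morphisms of $A$-modules. I interpret the statement with $M \in \catE$ and $N \in \cMod{\opP}{A}$, consistent with the definition of $A \otimes^{\opP} M$.

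Starting from $f \in \catE(M, \fog(N))$, I would form the composite
\[
 \widetilde{f}: \, S(\opP,A;M) \xrightarrow{S(\id,\id;f)} S(\opP,A;N) \xrightarrow{\ \lambda_N \ } N,
\]
and verify that $\widetilde{f}$ coequalizes the pair $(\wt{\mu}, \wt{\mu}_A)$. This is precisely where the $\opP$-algebra associativity of $(A,\mu_A)$ and the $A$-module associativity of $(N,\lambda_N)$ intervene in tandem: after inserting $S(\id,\id;f)$ on the left and using naturality of $S$ in the third slot, the two composites
\[
 S(\opP,S(\opP,A);N) \rightrightarrows S(\opP,A;N) \xrightarrow{\lambda_N} N
\]
obtained from $\wt{\mu}$ and $\wt{\mu}_A$ coincide by the associativity square of Definition \ref{dfn:mod-P-alg} applied to $\lambda_N$. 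The coequalizer universal property then yields a unique $\bar f : A \otimes^{\opP} M \to N$ in $\catE$ factoring $\widetilde f$, and I would check that $\bar f$ is $A$-equivariant by comparing the $A$-actions on $A \otimes^{\opP} M$ and $N$; this is again a diagram chase reducing to the same associativity square.

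In the opposite direction, given a morphism $g : A \otimes^{\opP}M \to N$ in $\cMod{\opP}{A}$, I would define $\widehat g: M \to \fog(N)$ by precomposing $g$ with
\[
 M \, \simeq \, S(\opI,A;M) \xrightarrow{S(\eta,\id;\id)} S(\opP,A;M) \longto A \otimes^{\opP} M.
\]
That $f \mapsto \bar f$ and $g \mapsto \widehat g$ are mutually inverse follows, on one side, from the unit axiom of the $A$-module $(N,\lambda_N)$ (recovering $f$ from $\bar f$ composed with the unit inclusion), and on the other side, from uniqueness in the coequalizer universal property together with the $A$-equivariance of $g$. Naturality of the bijection in $M$ and $N$ is routine from the functoriality of $S(\opP,A;-)$ and of coequalizers.

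The main obstacle is the coequalizer check, specifically tracking the isomorphism $S(\opP,S(\opP,A);M) \simeq S(\opP\circ\opP,A;M)$ used to define $\wt\mu$: the associativity of $\lambda_N$ is a priori a statement about $S(\opP,S(\opP,A);S(\opP,A;N))$, and one must verify, via naturality of the monoidal identifications of $S$ in each slot, that it implies equality of the two paths starting from $S(\opP,S(\opP,A);M)$. A conceptually cleaner alternative, which I would mention as a remark, is to recognize $S(\opP,A;-)$ as a monad on $\catE$ whose algebras are precisely $A$-modules in the sense of Definition \ref{dfn:mod-P-alg}, so that the statement becomes the standard free–forgetful adjunction for a monad.
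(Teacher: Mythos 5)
The paper gives no proof of this Fact at all --- it is quoted verbatim from \cite[Theorem 12.3.4]{LV} --- so there is no internal argument to compare against; what you have written is essentially the standard proof from the cited source. Your main line is correct. You rightly observe that the roles of $M$ and $N$ are swapped in the paper's displayed adjunction formula, and the construction of the bijection is sound: send $f\in\catE(M,\fog(N))$ to $\lambda_N\circ S(\id,\id;f)$ on $S(\opP,A;M)$, check that it coequalizes $\wt{\mu}$ and $\wt{\mu}_A$ by precomposing the associativity square of $\lambda_N$ with the unit insertion $N\to S(\opP,A;N)$, descend to $A\otimes^{\opP}M$, and recover $f$ from $\bar f$ via the unit axiom. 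You also correctly isolate the one delicate point, namely reconciling the map $S(\opP,S(\opP,A);M)\to S(\opP\circ\opP,A;M)$ used to define $\wt{\mu}$ with the identification $S(\opP,S(\opP,A);S(\opP,A;N))\simeq S(\opP\circ\opP,A;N)$ appearing in the associativity axiom; inserting the unit $N\to S(\opP,A;N)$ in the third slot does bridge the two, using the unit axiom of the operad.

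The ``conceptually cleaner alternative'' in your final sentence is, however, wrong, and you should delete it. The endofunctor $S(\opP,A;-)$ does admit a monad structure (graft the inner operation at the distinguished slot into the outer one), but its algebras are \emph{not} the $A$-modules of Definition \ref{dfn:mod-P-alg}: the monad multiplication never evaluates the $A$-slots by $\mu_A$, whereas the associativity axiom for $\lambda_E$ does, through the map $S(\id,\mu_A;\lambda_E)$. More decisively, if $A$-modules were the algebras over the monad $S(\opP,A;-)$, the free one on $M$ would have underlying object $S(\opP,A;M)$ itself, not the coequalizer $A\otimes^{\opP}M\simeq U_{\opP}(A)\otimes M$, and these differ in general: for $\opP=\opLie$ and $\frkg$ abelian of dimension at least $2$, $S(\opLie,\frkg;M)$ has the size of $T(\frkg)\otimes M$ while $U(\frkg)\otimes M=\mathrm{Sym}(\frkg)\otimes M$. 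The monad whose algebras are $A$-modules is $U_{\opP}(A)\otimes-=\fog\circ(A\otimes^{\opP}-)$, i.e.\ precisely the monad that this Fact (combined with Fact \ref{fct:rep=mod}) produces as output; invoking it would be circular.
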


Another example of $A$-modules is given by 

\begin{lem}\label{lem:mod-by-alg}
Let $f: A \to B$ be a morphism in ${}_{\opP}\catE$.
Then $B$ has a natural structure of $A$-module.
\end{lem}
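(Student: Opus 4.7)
The plan is to define the action map as the composite
\[
\lambda_B: \, S(\opP, A; B) \xrightarrow{\, S(\id, f; \id) \,} S(\opP, B; B) \deq S(\opP, B) \xrightarrow{\, \mu_B \,} B,
\]
using the identification $S(P, M; M) = S(P, M)$ recorded just before Definition \ref{dfn:mod-P-alg}. The intuition is to use $f$ to push the ``background'' $A$-entries into $B$-entries, then apply the $\opP$-algebra evaluation $\mu_B$. Since $f$, $\mu_B$, and $S(-,-;-)$ are functorial, the resulting $A$-module structure on $B$ is natural in $f$, and a morphism $A \to A'$ of $\opP$-algebras induces the expected compatibility of module structures.

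For the unit axiom, the $k = 1$ summand of $S(-, M; N)$ contains no $M$-slot, so $S(\opI, A; B) = \opI(1) \otimes B = \one \otimes B$ and $S(\id, f; \id)$ restricts to the identity on this space. Hence $\lambda_B \circ S(\eta, \id; \id)$ reduces to $\mu_B \circ S(\eta, \id)$ on $S(\opI, B) \to S(\opP, B) \to B$, which is the canonical isomorphism $\one \otimes B \simto B$ by the unit axiom for $(B, \mu_B)$. For the associativity axiom, I would expand both occurrences of $\lambda_B$ in the outer square of Definition \ref{dfn:mod-P-alg} as $\mu_B \circ S(\id, f; \id)$ and then push all the $f$'s to the leftmost position, using: (a) naturality of $S(-,-;-)$ in the operad-slot and the $M$-slot to commute $S(\id, f; \id)$ past both $S(\mu, \id; \id)$ and the decomposition isomorphism $S(\opP \circ \opP, A; B) \simto S(\opP, S(\opP, A); S(\opP, A; B))$; and (b) the assumption that $f$ is a morphism of $\opP$-algebras, equivalently the commutativity of \eqref{diag:P-alg:hom}, which yields $f \circ \mu_A = \mu_B \circ S(\id, f)$ and hence converts the inner $\mu_A$ into $\mu_B$ after $f$ has been applied. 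After this transport to $B$-land, the remaining square to check is precisely the associativity diagram for $(B, \mu_B)$ as a $\opP$-algebra, which commutes by assumption.

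The main obstacle is the bookkeeping around the decomposition isomorphism $S(\opP \circ \opP, A; B) \simto S(\opP, S(\opP, A); S(\opP, A; B))$: the distinguished slot carrying $B$ can lie either in the outermost layer or inside one of the inner $S(\opP, A)$ factors, and one must check that $S(\id, f; \id)$ intertwines the isomorphism uniformly across these cases. This is a naturality statement built into the construction of $S(P, M; N)$, so no genuinely new ingredient beyond (a) and (b) should be required.
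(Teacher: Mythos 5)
Your proposal defines $\lambda_B$ as exactly the same composite $S(\opP,A;B) \xrightarrow{S(\id,f;\id)} S(\opP,B;B) = S(\opP,B) \xrightarrow{\mu_B} B$ that the paper uses, and verifies the unit and associativity axioms by the same mechanism the paper invokes, namely repeated application of the commutative square \eqref{diag:P-alg:hom} expressing that $f$ is a morphism of $\opP$-algebras. This matches the paper's proof in both construction and verification strategy, so no further comparison is needed.
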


\begin{proof}
Define $\lambda_B:S(\opP,A;B) \to B$ to be the composition 
\[
 \lambda_B := \bigl(S(\opP,A;B) \xrightarrow{S(\id,f;\id)} 
 S(\opP,B;B) = S(\opP,B) \xrightarrow{\mu_B} B\bigr).
\]
By the commutativity of \eqref{diag:P-alg:hom},
we have the following commutative diagram.
\[
 \xymatrix{
  S(\opP,A;A) \ar[r]^{\id} \ar[d]_{S(\id,\id;f)} 
  & S(\opP,A) \ar[r]^(0.6){\mu_A} & A \ar[d]^{f} \\
  S(\opP,A;B) \ar[rr]_(0.6){\lambda_B} & & B
 }
\]
Using this diagram repeatedly, one can check that the two diagrams 
in Definition \ref{dfn:mod-P-alg} commute.
\end{proof}

In particular, $A$ is itself an $A$-module
whose action map is given by $\lambda_A = \mu_A$.

%%%%%%%%%%%%%%%%%%%%%%%%%%%%%%%%%%%%%%%%%%%%%%%%%%%%%%%%%%%%%%%%%%%%%%
\subsection{Enveloping Operads}

Following \cite[\S4.1]{Fr} 
we recall the enveloping operads for an operad.
Let $\catC$ and $\catE$ be symmetric monoidal categories 
as in the previous subsections.
Recall that $\cOp_{\catE}$ denotes the category of operads in $\catE$.

We have a symmetric monoidal functor 
$\eta: (\catC, \otimes,\one) \to (\catE,\otimes,\one)$
given by $\eta(C) = C \otimes \one$,
where $C \otimes \one$ denotes the external tensor product.
Using $\eta$, we can map an operad in $\catC$ to an operad in $\catE$.

Let $\opP$ be an operad in $\catC$.
Now take $\catE$ to be a symmetric monoidal category over $\catC$,
and consider the comma category $\opP/\cOp_{\catE}$ 
of objects under $\opP$.
More explicitly, we have 

\begin{dfn*}
Consider the category whose objects are operad morphisms 
$\phi: \opP \to \opQ$ in $\catE$
with $\opP$ regarded as an operad in $\catE$ via the functor $\eta$, 
and whose morphisms are commutative diagrams
\[
 \xymatrix{ 
  & \opP \ar[ld]_{\phi} \ar[rd]^{\phi'} \\
  \opQ \ar[rr] & & \opQ' }
\]
We denote it by $\opP/\cOp_{\catE}$. 
\end{dfn*}

Using this category $\opP/\cOp_{\catE}$
we can give a universal definition of enveloping operad.

\begin{dfn*}
The \emph{enveloping operad} of a $\opP$-algebra $A$ 
is an operad $\opU_{\opP} \in \opP/\cOp_{\catE}$ 
defined by the following adjunction.
\[
 (\opP/\cOp_{\catE})(\opU_{\opP}(A),\opQ) \deq
 {}_{\opP}\catE(A,\opQ(0)).
\]
\end{dfn*}

The right hand side makes sense 
since $\opQ(0)$ is the initial object in the category ${}_{\opQ}\catE$ 
and the structure map $\opP \to \opQ$ makes $\opQ(0)$ a $\opP$-algebra.
The existence of $\opU_{\opP}(A)$ follows from the fact 
that the functor $\opQ \mapsto \opQ(0)$ preserves limits.

In a set-theoretic context,
we have the following description of $\opU_{\opP}(A)(m)$.
It is spanned by formal elements
\[
 u(x_1,\ldots,x_m) = p(x_1,\ldots,x_m,a_1,\ldots,a_r),
\]
where $p \in \opP(m+r)$ with arbitrary $r \in \bbN$, 
$x_i$ are formal variables and $a_i \in A$.
These elements satisfy the relations of the form
\begin{align*}
&p(x_1,\ldots,x_m,a_1,\ldots,a_{e-1},q(a_e,\ldots,a_{e+s-1}),
   a_{e+s},\ldots,a_{r+s-1})
\\
&= p \circ_{m+e} q(x_1,\ldots,x_r,a_1,\ldots,a_{r+s-1}).
\end{align*}

%The result in \cite[4.1.11 Proposition]{Fr} 
%gives an explicit presentation of enveloping algebra.

By taking the unary part of the enveloping operad,
we get an algebra object in the monoidal category $\catE$.
This is the operadic enveloping algebra.

\begin{dfn}\label{dfn:env:op-alg}
The \emph{enveloping algebra} $U_{\opP}(A)$ of a $\opP$-algebra $A$ 
is a unital associative ring object $\opU_{\opP}(A)(1)$ in $\catE$ 
whose multiplication 
$U_{\opP}(A) \otimes U_{\opP}(A) \to U_{\opP}(A)$
is induced by the composition map $\mu_A$.
\end{dfn}

Using the set-theoretical description of $\opU_{\opP}(A)$,
we can consider the universal algebra $U_{\opP}(A)$ 
as the ring with the generators 
\[
 u = p(x,a_1,\ldots,a_n), \quad p \in \opP(1+n), \ a_i \in A
\]
and the defining relations
\begin{align*}
&p(x,a_1,\ldots,a_{e-1},q(a_e,\ldots,a_{e+n-1}),
   a_{e+n},\ldots,a_{m+n-1})
\\
&= p \circ_{1+e} q(x,a_1,\ldots,a_{m+n-1}).
\end{align*}

We have a more explicit description using the free $A$-module
in Definition \ref{dfn:free-A-mod}.

\begin{fct*}[{\cite[\S12.3.4]{LV}}]
For $\opP=(\opP,\mu,\eta) \in \cOp_{\catC}$ and $A \in {}_{\opP}\catE$,
the unital associative ring $U_{\opP}(A)$ is isomorphic to 
\[
 (A \otimes^{\opP} \one_{\catE},m,u),
\]
where the multiplication $m$ is given by
\[
 m: \, (A \otimes^{\opP} \one) \otimes (A \otimes^{\opP} \one)
       \longsimto A \otimes^{\opP}(A \otimes^{\opP} \one)
       \longto A \otimes^{\opP} \one
\]
with the second morphism induced by the composition map $\mu$,
and where the unit $u:\one \to A \otimes^{\opP} \one$ is induced by 
the unit map $\eta$. %:\opI \to \opP$ of the operad structure of $\opP$.
\end{fct*}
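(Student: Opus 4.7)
The proof proceeds in three phases: equip $A \otimes^{\opP} \one_{\catE}$ with the stated unital associative algebra structure, identify its underlying object with $U_{\opP}(A) = \opU_{\opP}(A)(1)$, and verify that the two algebra structures coincide. The central building block is a ``base change'' isomorphism $(A \otimes^{\opP} M) \otimes N \simeq A \otimes^{\opP}(M \otimes N)$ for $M,N \in \catE$. To see it, tensor the coequalizer of Definition \ref{dfn:free-A-mod} by $N$: since the functors $S(\opP,-;-)$ are built from external and internal tensor products, both of which commute with colimits in each $\catE$-variable by conditions E5--E6, the result is precisely the coequalizer defining $A \otimes^{\opP}(M \otimes N)$.

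Given this, the multiplication $m$ arises as the composite
\[
 (A \otimes^{\opP} \one) \otimes (A \otimes^{\opP} \one) \simto
 A \otimes^{\opP}(\one \otimes (A \otimes^{\opP} \one)) \simto
 A \otimes^{\opP}(A \otimes^{\opP} \one) \longto A \otimes^{\opP} \one,
\]
whose last arrow is the counit of the free-forgetful adjunction $A \otimes^{\opP} - \dashv \fog$ applied to the $A$-module $A \otimes^{\opP}\one$; concretely this map is induced by the $A$-action $\lambda: S(\opP,A; A \otimes^{\opP}\one) \to A \otimes^{\opP}\one$, which in turn uses the operadic composition $\mu$. The unit $u:\one \to A \otimes^{\opP}\one$ is the composite $\one \simeq S(\opI,A;\one) \xrightarrow{S(\eta,\id;\id)} S(\opP,A;\one) \surj A \otimes^{\opP}\one$. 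Associativity and unitality of $(A \otimes^{\opP} \one,m,u)$ then reduce to the operad axioms for $\opP$, the $\opP$-algebra axioms for $A$, and the coherence of the base-change isomorphism.

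To identify this algebra with $U_{\opP}(A)$, I would rely on the set-theoretic description provided just before the statement: an element $p(x,a_1,\ldots,a_n) \in U_{\opP}(A)$ with $p \in \opP(1+n)$ and $a_i \in A$ corresponds to the class in the coequalizer of a representative $p \otimes (\one \otimes a_1 \otimes \cdots \otimes a_n) \in S(\opP,A;\one)$, with the $\one$-slot placed first. Well-definedness up to the action of $\sg_n$ on the $a_i$ follows from the $\sg_{1+n}$-coinvariance of $S(\opP,A;\one)$, and the defining relations of $U_{\opP}(A)$ --- substitution of operad elements into the algebra variables --- match the coequalizer relations $\wt{\mu} = \wt{\mu}_{A}$ exactly. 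The multiplication in $U_{\opP}(A)$, which by the description amounts to composing two generators along the distinguished variable $x$ via $\circ_1$, matches the composite $m$ above because the final arrow in $m$ is induced by $\mu$.

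The main technical obstacle is the compatibility of the base-change isomorphism with the $A$-module structure on both sides, which is the nontrivial input to the associativity of $m$: one must show that the isomorphism $(A \otimes^{\opP} M) \otimes N \simeq A \otimes^{\opP}(M\otimes N)$ intertwines the natural $A$-actions. A secondary difficulty is to phrase the set-theoretic matching of the third phase in purely categorical terms valid for an arbitrary base category $\catC$ satisfying C1--C2, so that the proof extends beyond the concrete setting where one reasons with ``elements''; this can be done by replacing the explicit generator identification with a universal-property argument, checking that $A \otimes^{\opP}\one$ satisfies the adjunction defining $\opU_{\opP}(A)(1)$ in unary degree.
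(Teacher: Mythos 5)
The paper does not actually prove this statement --- it is quoted as a Fact with a bare citation to \cite[\S 12.3.4]{LV} --- so there is no internal proof to measure your argument against; the relevant benchmark is the cited source, and your proposal is essentially the argument given there. Your key lemma, the base-change isomorphism $(A \otimes^{\opP} M) \otimes N \simeq A \otimes^{\opP}(M \otimes N)$, is correct and is the right organizing device: since $S(\opP,A;M)$ is a direct sum of terms each containing exactly one $M$-slot, conditions E3, E5 and E6 let you slide $N$ into that slot and commute $- \otimes N$ past the coequalizer defining $A \otimes^{\opP} M$; and because the two parallel arrows $\wt{\mu}$, $\wt{\mu}_A$ only touch the $\opP$- and $A$-slots, the resulting isomorphism automatically intertwines the $A$-actions, so what you call the ``main technical obstacle'' is in fact routine. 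Your reading of the second arrow of $m$ as the counit of the adjunction $A \otimes^{\opP} - \dashv \fog$ at the free module $A \otimes^{\opP}\one$ is also consistent with the paper's phrase ``induced by the composition map $\mu$'', since that counit is precisely the action map built from $S(\mu,\id;\id)$.

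The one place where your sketch leans on something the paper itself only sketches is the third phase: the generators-and-relations matching uses the set-theoretic presentation of $\opU_{\opP}(A)$, which the paper states without derivation and which is not literally available over a general base $\catC$. You flag this yourself, and the categorical repair is the one you hint at: both $U_{\opP}(A) = \opU_{\opP}(A)(1)$ (via Fact \ref{fct:rep=mod}) and $A \otimes^{\opP}\one$ (via the free-forgetful adjunction together with your base-change isomorphism, which exhibits $A \otimes^{\opP} M \simeq (A \otimes^{\opP}\one) \otimes M$ as the free $A$-module on $M$) are associative algebra objects in $\catE$ whose categories of left modules are canonically equivalent to $\cMod{\opP}{A}$ compatibly with the forgetful functors to $\catE$; uniqueness of such a representing monoid then yields the isomorphism of algebras, and one checks it carries $\circ_1$-composition in the unary part to your $m$. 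With that substitution the proof is complete.
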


As in the case of the ordinary Lie algebras, we have

\begin{fct}[{\cite[4.3.2 Proposition]{Fr}}]\label{fct:rep=mod}
Let $A$ be a $\opP$-algebra in $\catE$.
The category of $A$-modules in $\catE$ 
is equivalent to the category of left $U_{\opP}(A)$-modules.
\end{fct}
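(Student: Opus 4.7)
The plan is to construct mutually quasi-inverse functors between $\cMod{\opP}{A}$ and $\lMod{U_{\opP}(A)}$, using the description $U_{\opP}(A) \simeq A \otimes^{\opP} \one$ established in the preceding fact.

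First I would establish a natural isomorphism in $\catE$,
$S(\opP,A;M) \simeq W \otimes M$,
where $W := \bigoplus_{k \ge 0}\bigl(\opP(1+k) \otimes A^{\otimes k}\bigr)_{\sg_k}$.
The point is that in the definition of $S(\opP,A;M)$ the $\sg_k$-action permutes only the $A$-slots and keeps the distinguished $M$-slot fixed, so by axiom E3 one may collect $M$ outside the coinvariants. Then, comparing the coequalizer that presents $A \otimes^{\opP} M$ with the one defining $A \otimes^{\opP} \one = U_{\opP}(A)$ and using axiom E5 (internal tensor preserves colimits in each variable), one obtains a natural isomorphism $A \otimes^{\opP} M \simeq U_{\opP}(A) \otimes M$ in $\catE$.

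With this in hand, I would build the forward functor $\Phi : \cMod{\opP}{A} \to \lMod{U_{\opP}(A)}$: for $(E,\lambda_E)$, the structure map corresponds via the identification above to a morphism $W \otimes E \to E$, which I would show descends to an action $\rho_E : U_{\opP}(A) \otimes E \to E$. The relations cutting $W$ down to $U_{\opP}(A)$ arise from $\opP$-composition applied only to $A$-arguments, and the associativity diagram of Definition \ref{dfn:mod-P-alg}, specialised to such composites, says precisely that $\lambda_E$ respects those relations; the unit and associativity axioms for $\rho_E$ are then extracted from the unit and associativity axioms for $\lambda_E$, the latter applied to composites that nontrivially touch the $E$-slot and where the multiplication on $U_{\opP}(A) = A \otimes^{\opP}\one$ is built from $\mu$ via $A \otimes^{\opP}(A \otimes^{\opP}\one) \to A \otimes^{\opP}\one$. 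The quasi-inverse $\Psi: \lMod{U_{\opP}(A)} \to \cMod{\opP}{A}$ sends $(E,\rho_E)$ to the $A$-module whose structure map is the composite $S(\opP,A;E) \simto W \otimes E \to U_{\opP}(A) \otimes E \xrightarrow{\rho_E} E$, the middle arrow being induced by the quotient $W \surj U_{\opP}(A)$. Both triangle identities $\Phi\Psi \simeq \id$ and $\Psi\Phi \simeq \id$ then hold by construction, since in each case one simply recovers the original structure map through the same identifications.

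The main obstacle is the careful bookkeeping in Step 1 combined with the correct splitting of the associativity axiom of Definition \ref{dfn:mod-P-alg}. One must separate its content into (a) the purely $A$-sided relations that define $U_{\opP}(A)$ as a quotient of $W$, and (b) the mixed relations that encode associativity of the $U_{\opP}(A)$-action on $E$, and check that these two pieces together exhaust the axiom without any further cross terms. Once this separation is made explicit, the remaining verifications are formal manipulations with the $\sg_k$-coinvariants and the external/internal tensor structures of $\catE$ over $\catC$.
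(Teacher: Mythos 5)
The paper does not prove this statement at all --- it is quoted as a Fact from \cite[4.3.2 Proposition]{Fr} --- and your proposal is essentially the standard argument given there (and in \cite[\S 12.3.1]{LV}): identify $S(\opP,A;E)\simeq W\otimes E$ with $W=\bigoplus_k(\opP(1+k)\otimes A^{\otimes k})_{\sg_k}$, observe that the coequalizer presenting $U_{\opP}(A)=A\otimes^{\opP}\one$ imposes exactly the purely $A$-sided relations, and split the associativity axiom of Definition \ref{dfn:mod-P-alg} into the part forcing descent of $W\otimes E\to E$ to $U_{\opP}(A)\otimes E\to E$ and the part giving associativity of that action. The outline is correct; the only imprecision is your claim that $\sg_k$ ``keeps the distinguished $M$-slot fixed'' --- it does permute that slot, and the identification with $W\otimes E$ really comes from summing over the position of the $E$-slot and passing to the stabilizer $\sg_{k-1}$ of a chosen position, which is the routine orbit argument you in any case need for the bookkeeping in your Step 1.
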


\begin{eg}\label{eg:env-alg}
For the classical operads in Example \ref{eg:classical-operads}
and Example \ref{eg:classical-op-alg},
we have the followings.
\begin{enumerate}
\item
For the commutative operad $\opP=\opCom$, 
the enveloping algebra $U_{\opCom}(A)$ is 
the unitary commutative algebra $A_+ = \one \oplus A$.
\item
For the associative operad $\opP=\opAsc$, 
the enveloping algebra $U_{\opAsc}(A)$ is 
the classical enveloping algebra $A^e = A_+ \otimes A_+^{\op}$.
\item
For the Lie operad $\opP=\opLie$,
the enveloping algebra $U_{\opLie}(\frkg)$ is 
the classical enveloping algebra $U(\frkg)$ 
if we take the base ring $k$ to be a field.
\end{enumerate}
\end{eg}

Since the enveloping operad is defined by adjunction,
the correspondence $A \mapsto U_{\opP}(A)$ 
%of $\opP$-algebras to its enveloping algebra 
enjoys a functoriality.
%Precisely speaking, we have

\begin{fct}[{\cite[Proposition 12.3.9]{LV}}]
\label{fct:UP-functor}
For any operad $\opP$ in $\catC$,
the enveloping algebra construction gives a functor 
\[
 U_{\opP}: \ 
 {}_{\opP}\catE \longto 
 (\text{unital associative algebra objects in $\catE$}).
\]
\end{fct}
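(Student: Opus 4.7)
The plan is to deduce functoriality formally from the universal property defining the enveloping operad. The adjunction
\[
 (\opP/\cOp_\catE)(\opU_\opP(A),\opQ) \deq {}_\opP\catE(A,\opQ(0))
\]
exhibits $\opU_\opP$ as the left adjoint of the assignment $R: (\phi:\opP \to \opQ) \mapsto \opQ(0)$, where $\opQ(0)$ carries the $\opP$-algebra structure obtained by pulling back the canonical $\opQ$-algebra structure along $\phi$. First I would verify that $R$ is actually a functor $\opP/\cOp_\catE \to {}_\opP\catE$: a morphism in $\opP/\cOp_\catE$ is an operad morphism $g: \opQ \to \opQ'$ with $g\phi = \phi'$, and the induced map $g(0): \opQ(0) \to \opQ'(0)$ is a $\opQ$-algebra morphism (where $\opQ'(0)$ is viewed as a $\opQ$-algebra via $g$); the condition $\phi' = g\phi$ then ensures that it intertwines the pulled-back $\opP$-actions. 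This is the only point where one does any honest verification at the operad level.

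Once $R$ is known to be a functor, the standard yoga of adjunctions makes $\opU_\opP$ automatically a functor ${}_\opP\catE \to \opP/\cOp_\catE$. Concretely, a $\opP$-algebra morphism $f: A \to B$ can be composed with the unit $B \to R\,\opU_\opP(B) = \opU_\opP(B)(0)$ of the adjunction, and transposing yields a unique operad morphism $\opU_\opP(f): \opU_\opP(A) \to \opU_\opP(B)$ in $\opP/\cOp_\catE$; preservation of identities and composites is forced by the uniqueness clause of the adjunction bijection. Post-composing with the unary-component functor $\opQ \mapsto \opQ(1)$ then produces a morphism $U_\opP(f): U_\opP(A) \to U_\opP(B)$ in $\catE$ for every $f$.

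It remains to verify that $U_\opP(f)$ is a morphism of unital associative algebra objects. This is immediate from Definition \ref{dfn:env:op-alg}: the multiplication and unit on $U_\opP(A) = \opU_\opP(A)(1)$ are inherited from the operadic composition and unit of $\opU_\opP(A)$ by restriction to the unary component, and since $\opU_\opP(f)$ is by construction an operad morphism (hence commutes with operadic composition and unit globally), its unary restriction $U_\opP(f)$ commutes with the multiplications and units on the two enveloping algebras. The main obstacle in the whole argument is really only the bookkeeping establishing functoriality of $R$; once that is in place, everything else follows formally from the adjunction.
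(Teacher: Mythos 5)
Your proposal is correct and follows essentially the same route as the paper, which records this statement as a cited fact and justifies it with exactly your observation: since the enveloping operad is defined by an adjunction (as left adjoint to $\opQ \mapsto \opQ(0)$), the assignment $A \mapsto \opU_{\opP}(A)$ is automatically functorial, and restricting to the unary component yields the functor $U_{\opP}$. Your additional bookkeeping — checking that $\opQ \mapsto \opQ(0)$ is a functor on the comma category and that the unary restriction of an operad morphism respects the induced algebra structure — just fills in the routine details the paper leaves implicit.
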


We close this subsection by the recollection on 
the relative free module \cite[\S12.3.5]{LV}.
First we note that 
any morphism $f: B \to A$ of $\opP$-algebras induces a functor
\[
 f^*: \, \cMod{\opP}{A} \longto \cMod{\opP}{B}.
\]
Indeed, by Fact \ref{fct:UP-functor},
we have a morphism $U_{\opP}(f): U_{\opP}(B) \to U_{\opP}(A)$ 
of associative algebras, 
and it gives the functor $U_{\opP}(f)^*$ from 
the category of left $U_{\opP}(A)$-modules to 
the category of left $U_{\opP}(B)$-modules 
by the restriction of scalar.
Then by Fact \ref{fct:rep=mod} we have the desired functor $f^*$.

\begin{dfn*}
We call $f^*$ \emph{the restriction functor}.
\end{dfn*}

\begin{rmk}\label{rmk:AB}
Let us note that 
there is a natural $A$-module structure on $A$ itself
by Lemma \ref{lem:mod-by-alg} applied to $\id_A:A \to A$,
and that there is also a $B$-module structure on $A$
by the same lemma applied to $f:B \to A$.
Let us denote by ${}_A A$ the former $A$-module
and by ${}_B A$ the latter $B$-module.
Then we have $f^* {}_B A \simeq {}_A A$ as $A$-modules.
\end{rmk}

Next we consider another functor 
\[
 f_!: \, \cMod{\opP}{B} \longto \cMod{\opP}{A}
\]
given by the following coequalizer.
\[
 \xymatrix{
  U_{\opP}(A) \otimes U_{\opP}(B) \otimes M 
  \ar@<0.5ex>[rr]^(0.55){U_{\opP}(f) \otimes \id} 
  \ar@<-0.5ex>[rr]_(0.55){\id \otimes \lambda} & &
  U_{\opP}(A) \otimes M \ar[r] & f_!(M).
 }
\]
Here $\lambda:U_{\opP}(B) \otimes M \to M$ denotes 
the left $U_{\opP}(B)$-module structure on $M$.
The functor $f_!$ is in fact a left adjoint of 
the restriction functor $f^*$.

\begin{fct}[{\cite[Proposition 12.3.10]{LV}}]\label{fct:f_!-f^*}
For any operad $\opP$ in $\catC$ and any morphism
$f:B \to A$ of $\opP$-algebras in $\catE$,
we have an adjoint pair of functors
\[
 f_!: \, \cMod{\opP}{B} \longbij \cMod{\opP}{A} \, :f^*.
\]
In particular we have a natural isomorphism
\[
 \cMod{\opP}{A}(f_!(M),N) \simeq \cMod{\opP}{B}(M,f^*(N))
\]
for each $M \in \cMod{\opP}{B}$ and $N \in \cMod{\opP}{A}$.
\end{fct}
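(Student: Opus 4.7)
The plan is to reduce the statement to the classical tensor-hom adjunction for non-commutative rings, using the equivalence of Fact \ref{fct:rep=mod} between $\cMod{\opP}{A}$ and the category of left $U_{\opP}(A)$-modules. Under this equivalence, the restriction functor $f^*$ is, by construction, exactly the restriction-of-scalars functor $U_{\opP}(f)^*$ along the ring morphism $U_{\opP}(f): U_{\opP}(B) \to U_{\opP}(A)$ provided by Fact \ref{fct:UP-functor}. Hence the statement will follow once we identify $f_!(M)$ with the relative tensor product $U_{\opP}(A) \otimes_{U_{\opP}(B)} M$ in $\catE$.

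First I would examine the defining coequalizer of $f_!(M)$. The two parallel arrows are precisely the two maps whose coequalizer computes the balanced (relative) tensor product over $U_{\opP}(B)$: one uses the right $U_{\opP}(B)$-module structure on $U_{\opP}(A)$ obtained by restricting the right multiplication on $U_{\opP}(A)$ along $U_{\opP}(f)$, and the other uses the given left $U_{\opP}(B)$-action on $M$. Thus by the universal property of coequalizers we obtain a canonical identification
\[
 f_!(M) \, \simeq \, U_{\opP}(A) \otimes_{U_{\opP}(B)} M
\]
in $\catE$, and the residual left multiplication of $U_{\opP}(A)$ on its first factor makes this an object of $\cMod{\opP}{A}$; the verification that it descends through the coequalizer is the standard argument for associativity of the tensor product (and uses that the internal tensor product commutes with colimits in each variable, condition E5).

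Next I would construct the adjunction bijection. Given $M \in \cMod{\opP}{B}$ and $N \in \cMod{\opP}{A}$, a morphism $f_!(M) \to N$ in $\cMod{\opP}{A}$ corresponds, via the coequalizer description, to a morphism $U_{\opP}(A) \otimes M \to N$ in $\catE$ that is left $U_{\opP}(A)$-linear and coequalises the two arrows above, i.e.\ a $U_{\opP}(B)$-balanced, left $U_{\opP}(A)$-linear morphism. Restricting along the unit $\one \to U_{\opP}(A)$ (coming from the unit of the associative ring object) produces a morphism $M \to f^*(N)$ in $\cMod{\opP}{B}$; conversely, any such morphism extends uniquely to a left $U_{\opP}(A)$-linear balanced map out of $U_{\opP}(A) \otimes M$ by freely multiplying by $U_{\opP}(A)$. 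Naturality in both $M$ and $N$ is clear from the construction.

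The main technical obstacle is the careful bookkeeping for the left $U_{\opP}(A)$-module structure on $f_!(M)$: one must check that the multiplication $m: U_{\opP}(A) \otimes U_{\opP}(A) \to U_{\opP}(A)$ descends to a well-defined action on the coequalizer, and this is where the balanced nature of the coequalizer combined with associativity of $m$ is needed. All other steps are formal consequences of standard module-theoretic adjunctions transported through the equivalence of Fact \ref{fct:rep=mod}; no further properties of the operad $\opP$ are required.
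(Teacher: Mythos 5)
Your proposal is correct and follows exactly the route the paper intends: the paper states this as a Fact citing \cite[Proposition 12.3.10]{LV}, having already set up $f^*$ as restriction of scalars along $U_{\opP}(f)$ via Fact \ref{fct:rep=mod} and $f_!$ as the balanced tensor coequalizer, so the adjunction is the standard extension/restriction-of-scalars adjunction for associative ring objects, just as you argue. No gaps; your attention to the descent of the left $U_{\opP}(A)$-action through the coequalizer (using that $\otimes$ preserves colimits in each variable) is exactly the point that needs checking.
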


%%%%%%%%%%%%%%%%%%%%%%%%%%%%%%%%%%%%%%%%%%%%%%%%%%%%%%%%%%%%%%%%%%%%%%
\subsection{Derivations}

Now we recall derivations of algebras over operads
following \cite[\S4.4]{Fr} and \cite[\S12.3.7]{LV}.

\begin{dfn}\label{dfn:derivation}
Let $f:B \to A$ be a morphism 
in the category ${}_{\opP}\catE$ of $\opP$-algebras in $\catE$.
Let $E=(E,\lambda_E)$ be an $A$-module in $\catE$.
A morphism $\theta \in \catE(B,E)$ is called an \emph{$A$-derivation}
if it makes the following diagram commutative.
\[
 \xymatrix{
  S(\opP,B) \ar[d]_{\mu_B} \ar@{=}[r] &
  S(\opP,B;B) \ar[rr]^{S(\id,\id;\theta)} & & 
  S(\opP,B;E) \ar[rr]^{S(\id,f;\id)} & &
  S(\opP,A;E) \ar[d]^{\lambda_E} \\
  B \ar[rrrrr]_{\theta} & & & & & E
 }
\]

We denote by $\Der^{\opP}_A(B,E)$ the set of all such derivations.
\end{dfn}

Using the partial composition $\circ_i$ in \eqref{eq:circ_i},
the condition for $\theta$ is restated as 
$\theta \circ p = \sum_{i=1}^n p \circ_i \theta: 
 B^{\otimes n} \longto E$
for all $n \in \bbN$ and $p \in \opP(n)$.
Using this presentation one can recover the classical notion of derivations 
in the case $\opP=\opCom,\opAsc,\opLie$.
%Note that by the existence of limits in $\catE$ 
%(see E4 in Definition \ref{dfn:E:over:C})
%this definition makes sense.

Next we will discuss the functor given by the operation 
$B \mapsto \Der^{\opP}_A(B,E)$.
The source category of this functor is set to be the comma category 
${}_{\opP}\catE/A$ of objects over $A$ in ${}_{\opP}\catE$.
A direct definition is 

\begin{dfn*}
Let $A \in {}_{\opP}\catE$.
Consider the category whose objects are 
morphisms $f:B \to A$ in ${}_{\opP}\catE$,
and whose morphisms are the commutative diagrams
\[
 \xymatrix{ 
  B \ar[rr] \ar[rd]_{f} & & B' \ar[ld]^{f'} \\ & A}
\]
We call it
the \emph{category of $\opP$-algebras over $A$},
and denote it by ${}_{\opP}\catE/A$. 
\end{dfn*}

The functor 
\[
 \Der^{\opP}_{A}(-,E): \, {}_{\opP}\catE/A \longto \cSet
\]
is representable by 
the \emph{module of K\"{a}hler differentials}.
Its definition is as follows.
Recall the free $A$-module functor $A \otimes^{\opP} -$ 
in Definition \ref{dfn:free-A-mod}.
Let $A$ be a $\opP$-algebra in $\catE$. 
We define $\Omega_{\opP} A \in \catE$ to be the following coequalizer.
\[
 \xymatrix{
  A \otimes^{\opP} S(\opP,A) 
  \ar@<0.5ex>[rr]^(0.55){\wt{\mu}_{(1)}} 
  \ar@<-0.5ex>[rr]_(0.55){\wt{\mu}_A} & &
  A \otimes^{\opP} A \ar[r] & \Omega_{\opP} A
 },
\]
where the map $\wt{\mu}_{(1)}$ comes from 
\[
 S(\opP,A;S(\opP,A)) = 
 S(\opP,A;S(\opP,A;A)) \to S(\opP \circ_{(1)} \opP,A;A)
 \xrightarrow{S(\mu,\id;\id)} S(\opP,A;A)
\]
and the map $\wt{\mu}_{A}$ is comes from 
\[
 S(\opP,A;S(\opP,A)) \xrightarrow{S(\id,\id;\mu_A)} S(\opP,A;A).
\]
The $A$-module structure on $A \otimes^{\opP} A$ passes to 
the quotient $\Omega_{\opP}A$.

\begin{dfn}\label{dfn:kahler}
The $A$-module $\Omega_{\opP}A$ in $\catE$ 
is called \emph{the module of K\"{a}hler differentials}. 
\end{dfn}

Now let $f:B \to A$ be a morphism in ${}_{\opP}\catE$.
Recall the functor $f_!:\cMod{\opP}{B} \to \cMod{\opP}{A}$
(see Fact \ref{fct:f_!-f^*}).

\begin{fct*}[{\cite[\S4.4]{Fr}, \cite[\S12.3]{LV}}]
For any $E \in \cMod{\opP}{A}$ we have 
\[
 \cMod{\opP}{A}(\Omega_{\opP}B, f_! E) \deq \Der^{\opP}_A(B,E).
\]
\end{fct*}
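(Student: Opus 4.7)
The plan is to combine the universal property of Kähler differentials with the adjunction $f_! \dashv f^*$ established in Fact \ref{fct:f_!-f^*}. The strategy is to pass to $B$-modules via restriction along $f$ and to use the coequalizer presentation of $\Omega_{\opP}B$ to extract the derivation condition.

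First I would establish the absolute case: for any $B$-module $N$ in $\catE$ there is a natural bijection
\[
 \cMod{\opP}{B}(\Omega_{\opP}B, N) \simeq \Der^{\opP}_B(B, N).
\]
To verify this, I would use the adjunction between the free $B$-module functor $B \otimes^{\opP} -$ and the forgetful functor $\fog$: a morphism $B \otimes^{\opP} B \to N$ of $B$-modules corresponds to a morphism $\theta: B \to \fog(N)$ in $\catE$. The coequalizer relation imposed on $\Omega_{\opP} B$, namely the equality of the two arrows $\wt{\mu}_{(1)}$ and $\wt{\mu}_B$, translates under this adjunction into exactly the derivation condition $\theta \circ p = \sum_{i=1}^{n} p \circ_i \theta$ for all $p \in \opP(n)$. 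Matching the two parallel arrows in the coequalizer against the partial compositions $\circ_i$ is the main technical point here.

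Next I would reduce the relative case to this absolute one. Unwinding Definition \ref{dfn:derivation}, the composite $\lambda_E \circ S(\id, f; \id): S(\opP,B;E) \to E$ is precisely the $B$-action on the restriction $f^*E$ obtained by pulling back along $U_{\opP}(f): U_{\opP}(B) \to U_{\opP}(A)$. Hence the defining square of an $A$-derivation $\theta:B \to E$ is the same data as the $B$-derivation condition on $\theta$ regarded as a map $B \to f^*E$, giving
\[
 \Der^{\opP}_A(B,E) \deq \Der^{\opP}_B(B, f^*E).
\]

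Finally I would assemble the chain
\[
 \cMod{\opP}{A}(f_! \Omega_{\opP}B, E) \simeq \cMod{\opP}{B}(\Omega_{\opP}B, f^*E)
 \simeq \Der^{\opP}_B(B, f^*E) \deq \Der^{\opP}_A(B,E),
\]
where the first bijection is Fact \ref{fct:f_!-f^*} and the second is the absolute case. The main obstacle is the absolute step: carefully checking that the equality of $\wt{\mu}_{(1)}$ and $\wt{\mu}_B$ in the coequalizer defining $\Omega_{\opP}B$, after passing through the free-module adjunction, is precisely the operadic Leibniz identity encoded by the partial compositions $\circ_i$.
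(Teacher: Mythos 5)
Your argument is correct and is essentially the standard proof from the sources the paper cites for this Fact (Fresse \S4.4, Loday--Vallette \S12.3): the paper itself gives no proof, and your decomposition into the absolute representability $\cMod{\opP}{B}(\Omega_{\opP}B,N)\simeq\Der^{\opP}_B(B,N)$ via the coequalizer presentation and the free-module adjunction, followed by the identification $\Der^{\opP}_A(B,E)=\Der^{\opP}_B(B,f^*E)$ and the adjunction $f_!\dashv f^*$ of Fact \ref{fct:f_!-f^*}, is exactly how those references establish it. The only point worth flagging is that the statement as printed places the extension-of-scalars functor on the wrong factor --- it should read $\cMod{\opP}{A}(f_!\Omega_{\opP}B,E)$, which is the form your final chain (correctly) proves.
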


%%%%%%%%%%%%%%%%%%%%%%%%%%%%%%%%%%%%%%%%%%%%%%%%%%%%%%%%%%%%%%%%%%%%%%
\subsection{Cooperads}

Cooperad is a natural dual notion of operad.
See \cite[\S5.8]{LV} for an explanation in the non-relative setting.
Here we give definitions in the relative setting.
Let $\catC$ and $\catE$ be as in the previous subsections.

First we introduce a new monoidal operation on the category $\catC^{\sg}$
of $\sg_*$-objects,
which can be considered as a natural dual operation of $\circ$ 
in Definition \ref{dfn:circ}.

\begin{dfn*}
For $M,N \in \catC^{\sg}$, define $M \bcirc N \in \catC^{\sg}$ by
\[
 M \bcirc N := \prod_{k \ge 0} (M(k) \otimes N^{\otimes k})^{\sg_k}.
\]
Here $\sg_k$ acts diagonally on $M(k) \otimes N^{\otimes k}$
where the action on $N^{\otimes k}$ is the permutation.
\end{dfn*}

One can check that $(\catC^{\sg},\bcirc,\opI)$ is a monoidal category.
Using $\bcirc$ instead of $\circ$,
one can define cooperads just dually as operads.

\begin{dfn*}
A \emph{cooperad in $\catC$} is a $\sg_*$-object $\opC$ in $\catC$ 
together with two morphisms of $\sg_*$-objects 
$\Delta: \opC \to \opC \circ \opC$ and $\ve: \opC \to \opI$
making the following diagrams commutative.
\[
 \xymatrix{
  \opC \ar[r]^{\Delta} \ar[d]_(0.45){\Delta} & 
  \opC \bcirc \opC \ar[d]^(0.45){\id \bcirc \Delta} \\
  \opC \bcirc \opC \ar[r]_(0.42){\Delta \bcirc \id} & 
  \opC \bcirc \opC \bcirc \opC
 }
 \qquad
 \xymatrix{
  & \opC \ar[ld]_{\sim} \ar[d]_{\Delta} \ar[rd]^{\sim} \\
  \opI \bcirc \opC & 
  \opC \bcirc \opC \ar[l]^{\ve \bcirc \id} 
                   \ar[r]_(0.45){\id \bcirc \ve} & 
  \opC \bcirc \opI   
 } 
\]
The morphism $\Delta$ is called the \emph{decomposition map},
and $\ve$ is called the \emph{counit map}.

A morphism between cooperads is defined naturally.
The category of cooperads in $\catC$ is denoted by $\cCo_{\catC}$.
\end{dfn*}

As in the case of the identity operad,
the identity $\sg_*$-object $\opI$ has a cooperad structure.
The corresponding cooperad is called \emph{the identity cooperad} 
and denoted by the same symbol $\opI$.

One can also define coalgebras over cooperads 
in a dual manner of algebras over operads
For $C \in \catC^{\sg}$ and $X \in \catE$, set
\[
 \ol{S}(C,X) := 
 \prod_{k \ge 0} (C(k) \otimes X^{\otimes k})^{\sg_k},
\]
where $\sg_k$ acts diagonally on $C(k) \otimes X^{\otimes k}$.

\begin{dfn*}
Let $\opC$ be a cooperad in $\catC$.
A \emph{$\opC$-coalgebra $C$} in $\catE$ is an object $C \in \catE$
equipped with a morphism $\Delta_C: C \to \ol{S}(\opC,C)$
such that the following diagrams commute.
\begin{align*}
&\xymatrix{
  \opC \ar[rrr]^{\Delta_C} \ar[d]_{\Delta_C} & & &
  \ol{S}(\opC,C) \ar[d]^{\ol{S}(\Delta,\id)} \\
  \ol{S}(\opC,C) \ar[rr]_(0.45){\ol{S}(\id,\Delta_C)} & & 
  \ol{S}(\opC,\ol{S}(\opC,C)) \ar[r]_{\sim} &
  \ol{S}(\opC \bcirc \opC,A)
 } 
 \\ %\qquad
& \hskip 7em
 \xymatrix{
  C \ar[d]_{\Delta_C} \ar[rd]^{\sim} \\
  \ol{S}(\opC,C) \ar[r]_{\ol{S}(\eta,\id)} & \ol{S}(\opI,C)
 }
\end{align*}
\end{dfn*}

The structure of a cooperad is encoded by the \emph{partial decompositions} 
\[
 \Delta_{(i)}: \opC(r+s-1) \longto \opC(r) \otimes \opC(s)
\]
for $1 \le i \le r+s-1$ 
as in the case of operads and partial compositions.

Dualizing the argument of free operad, 
we have the notion of cofree cooperad.
Let us briefly recall its definition.
See \cite[\S5.8.6]{LV} for the detail.

In the adjoint language, the cofree cooperad is defined as the image of 
a given object $M \in \catC$ by the left adjoint 
\[
 \ol{\opF}: \catC^{\sg} \longto \cCo_{\catC}
\]
of the forgetful functor $\cCo_{\catC} \to \catC^{\sg}$.

Explicitly, the underlying $\sg_*$-object is given by
the same one as the free operad $\opF(M)$.
In other words, we set 
\[
 \ol{\opF}(M) \seteq \varinjlim \ol{\opF}_n(M),
\]
where $\ol{\opF}_n(M)$ is defined by 
\[
 \ol{\opF}_0(M) \seteq \opI, \quad
 \ol{\opF}_n(M) \seteq \opI \oplus \bigl(M \bcirc \ol{\opF}_{n-1}(M)\bigr).
\]

The decomposition map is inductively defined in the following way.
First we set
\[
 \Delta(\id) \seteq \id \bcirc \id, \quad
 \Delta(m)   \seteq \id \bcirc m + m \bcirc \id^{\otimes n} \ 
 (m \in M(n)).
\]
Thus $\Delta$ on $\ol{\opF}_1(M)$ is defined.
Suppose $\Delta$ is defined on $\ol{\opF}_{n-1}(M)$.
Then for 
$p%=(m;n_1,\ldots,n_k) 
 \in M \bcirc \ol{\opF}_{n-1}(M) \subset \ol{\opF}_n(M)$ 
we set
\[
 \Delta(p) \seteq \id \bcirc p + \Delta^+(p),
\]
where $\Delta^+$ is the following composition.
\begin{align*}
 M \bcirc \ol{\opF}_{n-1}M 
&\xrightarrow{\, \id \bcirc \Delta \, }
 M \bcirc (\ol{\opF}_{n-1}M \bcirc \ol{\opF}_{n-1}M )
 \longsimto (M \bcirc \ol{\opF}_{n-1}M) \bcirc \ol{\opF}_{n-1}M \\
&\xrightarrow{\, j_n \bcirc \, i_n \, }
 \ol{\opF}_n M \bcirc \ol{\opF}_n M.
\end{align*}
Here $i_n: \ol{F}_{n-1}(M) \inj \ol{F}_{n-1}(M)$ and 
$j_n: M \bcirc \ol{F}_{n-1}(M) \inj \ol{F}_n(M)$ are natural inclusions.

Define $\ve:\ol{\opF}(M) \to M$ by $\ol{\opF}_1(M) = \opI \oplus M \surj M$.
Then $(\ol{\opF}(M),\Delta,\ve)$ is a cooperad in $\catC$.

\begin{dfn}\label{dfn:cofree-coop}
The cooperad $\ol{\opF}(M)$ is called the \emph{cofree cooperad} 
associated to $M$.
\end{dfn}

Moreover, the inclusion map $\eta: \opI \to \ol{\opF}(M)$ induced by 
by the natural inclusions $j_n$ is a \emph{coaugmentation} map 
of the cooperad $\ol{\opF}(M)$.
In other words, $\eta$ is a cooperad morphism such that $\ve \eta = \id$.

\begin{dfn}\label{dfn:coaug-coop}
A cooperad equipped with a coaugmentation map is called
a \emph{coaugmented cooperad}.
For a coaugmented cooperad $\opC$ with the coaugmentation map 
$\eta: \opI \to \opC$,
the cokernel $\Coker(\eta)$ is called the \emph{coaugmentation coideal},
denoted by $\opC_{\coaug}$.
\end{dfn}

%%%%%%%%%%%%%%%%%%%%%%%%%%%%%%%%%%%%%%%%%%%%%%%%%%%%%%%%%%%%%%%%%%%%%%
\section{Operadic homology algebra}
\label{sect:op-hom-alg}

In this section we review the homology theory for algebras over operads
following \cite[Chap.\ 13]{Fr} and \cite[Chap.\ 12]{LV}.
Unless otherwise stated, we take $\catC=\catE=\dgkMod$, 
%symmetric monoidal categories $\catC$ and $\catE$ 
%as in the previous \S\ref{sect:operad}.
the category of complexes of $\bbk$-modules, or dg $\bbk$-modules, 
over a fixed commutative ring $\bbk$.
An object of $\dgkMod$ will be denoted like 
$C=(\oplus_{n\in\bbZ}C_n,d)=(C_{\bullet},d)$ with $d:C_n \to C_{n+1}$.
For a homogeneous element $x \in C_p$, we set $|x|:=p$.
We consider the standard symmetric monoidal structure on $\dgkMod$.
Explicitly, for $(C_{\bullet},d_C),(D_{\bullet},d_D) \in \dgkMod$ 
we define $C \otimes D=((C\otimes D)_\bullet, d)$ by 
\[
 (C \otimes D)_n := \bigoplus_{p+q=n}C_p \otimes D_q,
\]
and 
\[
 d(x \otimes y) := d_C(x) \otimes y + (-1)^{p} x \otimes d_D(y) 
\]
for a homogeneous element $x \in C_p$.
%$\opP=(\opP,\mu,\eta)$ denotes an operad in $\catC$.

%%%%%%%%%%%%%%%%%%%%%%%%%%%%%%%%%%%%%%%%%%%%%%%%%%%%%%%%%%%%%%%%%%%%%
\subsection{Homology of algebras over operads}

Following \cite[Chapter 13]{Fr} we recall 
the homology theory of algebras over operads.
Since we take $\catC=\dgkMod$, 
it has a standard model structure, 
making $\catC$ to be a cofibrantly generated symmetric monoidal category.
%We assume that the base category $\catC$  has a model structure 
%so that it is a cofibrantly generated symmetric monoidal category.
Let $\opP$ be a $\sg_*$-cofibrant operad in $\catC$.
Then by the argument on semi-model structure on operads \cite[Chap.\ 11]{Fr}
%by Fact \ref{fct:semi-model:operads} and 
%Fact \ref{fct:semi-model:algebras},
the category ${}_{\opP}\catE$ of $\opP$-algebras in $\catE=\dgkMod$ 
has a semi-model structure.
In particular for any $A \in {}_{\opP}\catE$ 
one can take a cofibrant replacement $Q_A \simto A$ in ${}_{\opP}\catE$. 
%Then we can consider the set $\Der^{\opP}_A(Q_A,E)$ of derivations 
%for $E \in \cMod{\opP}{A}$ (see Definition \ref{dfn:derivation}).
%Note that since we take $\catE=\dgkMod$,
%the set $\Der^{\opP}_A(B,E)$ is actually a dg $\bbk$-module.
%Thus we have 
%\[
% \Der_A^{\opP}(Q_A,-): \, \cMod{\opP}{A} \longto \dgkMod.
%\]

For the definition of (co)homology of algebras over operads,
it is convenient to introduce the universal coefficient.
Recall the set $\Omega_{\opP}(\cdot)$ of 
K\"{a}hler differentials in Definition \ref{dfn:kahler}.
Since we take $\catC=\catE=\dgkMod$,
$\Omega_{\opP}(M)$ is actually a dg $\bbk$-module 
for any $M \in \cMod{\opP}{A}$
In particular, taking a cofibrant replacement $Q_A \simto A$,
one has the dg $\bbk$-module 
$\Omega_{\opP}(Q_A) \in \cMod{\opP}{A}$.
Note that by Fact \ref{fct:rep=mod}
$\Omega_{\opP}(Q_A)$ is a left $U_{\opP}(Q_A)$-module,
where $U_{\opP}(Q_A)$ denotes the enveloping algebra of $Q_A$.
(see Definition \ref{dfn:env:op-alg}).
Note also that by the morphism $Q_A \to A$
the enveloping algebra $U_{\opP}(Q_A)$ acts on $U_{\opP}(A)$ from right. 

\begin{dfn*}
For $A \in {}_{\opP}\catE$ %, and $M$ be a right $U_{\opP}(A)$-module.
we define the left $U_{\opP}(A)$-module $T_{\opP}(Q_A)$ to be
\[
 T_{\opP}(Q_A) \seteq 
 U_{\opP}(A) \otimes_{U_{\opP}(Q_A)} \Omega_{\opP}(Q_A).
\]
\end{dfn*}

Let us cite the following important result.

\begin{fct}[{\cite[13.1.12 Lemma]{Fr}}]\label{fct:T^1:weq}
The morphism $T_{\opP}(f):T_{\opP}(Q_A) \to T_{\opP}(Q_B)$ induced by
a weak equivalence $f:Q_A \simto Q_B$ of $\opP$-algebras over $A$ in $\catE$
is a weak equivalence of left $U_{\opP}(A)$-modules
if $Q_A$ and $Q_B$ are cofibrant.
\end{fct}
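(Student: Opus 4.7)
The plan is to reduce the statement to two homotopical properties of the basic operadic constructions $\Omega_{\opP}(-)$ and $U_{\opP}(-)$. Let $g_A: Q_A \to A$ and $g_B: Q_B \to A$ be the structure maps of the cofibrant replacements, with $g_A = g_B \circ f$. Using the notation of Fact \ref{fct:f_!-f^*}, and the identification $T_{\opP}(Q) = (g)_! \Omega_{\opP}(Q)$, one has
\[
 T_{\opP}(Q_A) = (g_B)_! \bigl(f_!\Omega_{\opP}(Q_A)\bigr),
 \qquad
 T_{\opP}(Q_B) = (g_B)_!\Omega_{\opP}(Q_B),
\]
and $T_{\opP}(f)$ is the image under $(g_B)_!$ of the natural comparison morphism $\Phi_f: f_!\Omega_{\opP}(Q_A) \to \Omega_{\opP}(Q_B)$ of $U_{\opP}(Q_B)$-modules. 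The problem therefore splits into two sub-claims: (i) that $\Phi_f$ is itself a weak equivalence, and (ii) that $(g_B)_! = U_{\opP}(A) \otimes_{U_{\opP}(Q_B)} (-)$ preserves this weak equivalence.

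For (ii), it is enough to establish that $\Omega_{\opP}$ sends cofibrant $\opP$-algebras to cofibrant modules over their enveloping algebras; then $\Omega_{\opP}(Q_B)$ is a cofibrant $U_{\opP}(Q_B)$-module, and $f_!\Omega_{\opP}(Q_A)$ is cofibrant as the base change of a cofibrant module along the morphism $U_{\opP}(f)$ of associative algebras. The left adjoint $(g_B)_!$ is then a left Quillen functor and preserves the weak equivalence between these cofibrant modules. To handle (i) together with the cofibrancy statement, my plan is to invoke the cellular description of cofibrant $\opP$-algebras: each such object is a retract of a quasi-free $\opP$-algebra $(\opF(\opP)(M),d)$ with $M$ a cofibrant $\sg_*$-object built cellularly, and for any quasi-free $(\opF(\opP)(M),d)$ there is a canonical isomorphism
\[
 \Omega_{\opP}(\opF(\opP)(M),d) \simeq U_{\opP}(\opF(\opP)(M),d) \otimes M
\]
of $U_{\opP}$-modules with a compatible twisted differential. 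This gives the required cofibrancy immediately, and it reduces (i) to a comparison of generating $\sg_*$-objects that can be checked by a transfinite induction along the cell attachments defining $Q_A$ and $Q_B$.

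The main obstacle I anticipate is step (i): while the description of $\Omega_{\opP}$ on quasi-free objects is clean, propagating the weak equivalence $f$ through the nonlinear Kähler-differential construction requires matching cellular presentations of $Q_A$ and $Q_B$ up to homotopy, and at each attachment stage one must verify that the comparison map remains a weak equivalence after extending scalars along a morphism of enveloping algebras. It is precisely here that the $\sg_*$-cofibrancy hypothesis on $\opP$ plays its role: it ensures that $U_{\opP}(-)$ is itself homotopy invariant on cofibrant $\opP$-algebras and that the $\sg_k$-coinvariants implicit in the definitions of $S(\opP,-)$ and of the enveloping algebra compute the correct derived functors, so that each inductive step goes through.
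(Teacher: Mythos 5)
First, a remark on the comparison: the paper does not prove this statement at all --- it is imported verbatim as Lemma 13.1.12 of Fresse's book [Fr], so the only available benchmark is Fresse's own argument. Your reduction matches its skeleton: writing $T_{\opP}(Q)=g_!\,\Omega_{\opP}(Q)$ for the structure map $g:Q\to A$, factoring $T_{\opP}(f)$ as $(g_B)_!$ applied to the adjoint comparison map $\Phi_f:f_!\Omega_{\opP}(Q_A)\to\Omega_{\opP}(Q_B)$, and using the quasi-free description $\Omega_{\opP}(\opF(M),d)\simeq(U_{\opP}(\opF(M),d)\otimes M,d)$ to see that $\Omega_{\opP}$ of a cofibrant algebra is a cofibrant module, so that $(g_B)_!$ preserves the weak equivalence by Ken Brown's lemma. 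Step (ii) is complete as you state it.

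The gap is in step (i), which carries all the content. You propose to show that $\Phi_f$ is a weak equivalence by a transfinite induction that ``matches cellular presentations of $Q_A$ and $Q_B$ up to homotopy.'' This is not available as stated: an arbitrary weak equivalence $f:Q_A\simto Q_B$ of cofibrant $\opP$-algebras does not respect any choice of cell structures, and there is no canonical alignment of attaching maps to induct over; this is exactly the difficulty you flag, and you do not resolve it. The standard way out (and Fresse's) is to avoid inducting on $f$ altogether: one first establishes, once and for all, that the functors $U_{\opP}(-)$ and $\Omega_{\opP}(-)$ preserve weak equivalences between cofibrant $\opP$-algebras --- in Fresse's framework both arise from right $\opP$-modules, and functors of this type are homotopy invariant on cofibrant algebras, which is precisely where the $\sg_*$-cofibrancy of $\opP$ enters, as you correctly anticipate. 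Granting this, $U_{\opP}(f)$ is a weak equivalence of dg algebras, so for the cofibrant $U_{\opP}(Q_A)$-module $\Omega_{\opP}(Q_A)$ the unit $\Omega_{\opP}(Q_A)\to f^*f_!\Omega_{\opP}(Q_A)$ is a weak equivalence; combining this with the weak equivalence $\Omega_{\opP}(f):\Omega_{\opP}(Q_A)\to\Omega_{\opP}(Q_B)$ and two-out-of-three shows $\Phi_f$ is a weak equivalence, with no cell-matching required. Your outline is therefore repairable, but as written the central step is missing rather than merely sketched.
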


Now we can define the (co)homology of $\opP$-algebras with coefficients
using $\otimes$ and $\Hom$ over the modules of the associative algebra 
$U_{\opP}(Q_A)$.

\begin{dfn}\label{dfn:homology}
Let $A \in {}_{\opP}\catE$ 
and $Q_A$ be a cofibrant replacement of $A$ in $\catE$.
\begin{enumerate}
\item
For a right $U_{\opP}(A)$-module $M$, 
the \emph{homology of $A$ with coefficient in $M$} is 
a dg $\bbk$-module defined by 
\[
 H^{\opP}_{\bullet}(A,M) \seteq 
 H_{\bullet}\bigl(M \otimes_{U_{\opP}(A)}T_{\opP}(Q_A)\bigr),
\]

\item
For a left $U_{\opP}(A)$-module $N$, 
the \emph{cohomology of $A$ with coefficient in $N$} is 
a graded $\bbk$-module defined by 
\[
 H_{\opP}^{\bullet}(A,N) \seteq H^{\bullet} \Hom_{U_{\opP}(A)}(T_{\opP}(Q_A),N).
\]
\end{enumerate}
\end{dfn}

By Fact \ref{fct:T^1:weq},
these definitions are 
independent of the choice of cofibrant replacement $Q_A$
\cite[13.1.2, 13.1.4 Proposition]{Fr}.

%Let us take a $\opP$-algebra $A \in {}_{\opP}\catE$.
%The cohomology of $A$ with coefficient in an $A$-module
%is defined via 

Let us close this subsection by explaining 
the origin of the definition of homology.
Recall the set $\Der^{\opP}_A(B,E)$ of derivations 
in Definition \ref{dfn:derivation}.
In the present setting $\catC=\catE=\dgkMod$,
this set is a dg $\bbk$-module.

\begin{fct*}[{\cite[Chap.\ 13]{Fr}}]
For $A \in {}_{\opP}\catE$ and $N \in \cMod{\opP}{A}$, % be an $A$-module.
we have the following dg $\bbk$-module isomorphism. 
\[
 H_{\opP}^{\bullet}(A,N) \simeq H^{\bullet} \Der_A^{\opP}(Q_A,N).
\]
\end{fct*}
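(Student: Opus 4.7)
The plan is to identify the complex $\Hom_{U_{\opP}(A)}(T_{\opP}(Q_A),N)$ with $\Der^{\opP}_A(Q_A,N)$ as dg $\bbk$-modules by unwinding the universal coefficient module and invoking earlier facts of the paper; taking $H^{\bullet}$ will then yield the claim. Throughout, write $f\colon Q_A \simto A$ for the structure morphism of the cofibrant replacement.

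First I would rewrite the universal coefficient module. Comparing the defining coequalizer of the push-forward functor $f_!$ appearing before Fact \ref{fct:f_!-f^*} with the formula $T_{\opP}(Q_A) = U_{\opP}(A) \otimes_{U_{\opP}(Q_A)} \Omega_{\opP}(Q_A)$, and using Fact \ref{fct:rep=mod} to translate between $A$-modules and left $U_{\opP}(A)$-modules, one obtains a natural isomorphism $T_{\opP}(Q_A)\simeq f_!\bigl(\Omega_{\opP}(Q_A)\bigr)$ in $\cMod{\opP}{A}$. Hence
\[
 \Hom_{U_{\opP}(A)}(T_{\opP}(Q_A),N) \simeq \cMod{\opP}{A}\bigl(f_!\Omega_{\opP}(Q_A),\,N\bigr).
\]

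Second I would invoke the representability of derivations by the module of K\"ahler differentials stated just after Definition \ref{dfn:kahler}, applied to $f\colon Q_A \to A$, to obtain
\[
 \cMod{\opP}{A}\bigl(f_!\Omega_{\opP}(Q_A),\,N\bigr) \deq \Der^{\opP}_A(Q_A,N).
\]
Composing the two identifications yields a natural isomorphism
\[
 \Hom_{U_{\opP}(A)}(T_{\opP}(Q_A),N) \simeq \Der^{\opP}_A(Q_A,N),
\]
and taking $H^{\bullet}$ delivers the desired statement.

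The main obstacle will be to promote these set-theoretic bijections to isomorphisms of complexes, not merely of graded $\bbk$-modules. Each $\Hom$ and $\Der$ carries its standard internal differential, namely the graded commutator with the differentials on source and target; the earlier facts were stated in the plain symmetric-monoidal setting, so I must verify that both identifications intertwine these differentials degreewise. I expect this to follow from the naturality of the adjunction $f_! \dashv f^*$ together with the functoriality of $\Omega_{\opP}(-)$ and $U_{\opP}(-)$: the differential on $Q_A$ induces compatible differentials on $\Omega_{\opP}(Q_A)$ and $U_{\opP}(Q_A)$ through their universal defining coequalizers, and consequently the derivation condition in Definition \ref{dfn:derivation} transports faithfully across the adjunction. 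Once this chain-level compatibility is in hand, independence from the choice of cofibrant replacement $Q_A$ on the level of $H^{\bullet}$ is guaranteed by Fact \ref{fct:T^1:weq}.
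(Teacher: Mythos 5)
Your proposal is correct and follows the intended argument (the paper itself gives no proof, only the citation to [Fr, Chap.\ 13]): one identifies $T_{\opP}(Q_A)=U_{\opP}(A)\otimes_{U_{\opP}(Q_A)}\Omega_{\opP}(Q_A)$ with $f_!\Omega_{\opP}(Q_A)$ for $f\colon Q_A\to A$, and then the representability of $\Der^{\opP}_A(Q_A,-)$ by the K\"ahler differentials, combined with the adjunction $f_!\dashv f^*$, gives the chain-level isomorphism $\Hom_{U_{\opP}(A)}(T_{\opP}(Q_A),N)\simeq\Der^{\opP}_A(Q_A,N)$. Your attention to the dg (rather than merely set-theoretic) nature of these identifications is the right point to flag, and your reading of the representability fact in the form $\cMod{\opP}{A}(f_!\Omega_{\opP}B,E)\simeq\Der^{\opP}_A(B,E)$ is the correct one.
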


In \cite[Chap.\ 12]{LV} the right hand side is taken to be 
the definition of the homology of $A$ with coefficient.
It is reminiscent of the Andr\'e-Quillen homology of commutative algebras.
%Let us emphasize again that 
%the coefficient of the cohomology is taken in $\cMod{\opP}{A}$,
%or equivalently by Fact \ref{fct:rep=mod}, in the category of 
%left $U_{\opP}(A)$-modules,
%where $U_{\opP}(A)$ denotes the enveloping algebra of $A$.
%(see Definition \ref{dfn:env:op-alg}).

%We call these (co)homology as \emph{operadic (co)homology}.
%
%

%%%%%%%%%%%%%%%%%%%%%%%%%%%%%%%%%%%%%%%%%%%%%%%%%%%%%%%%%%%%%%%%%%%%%%
\subsection{Operadic twisting morphism}

The remaining subsections in the present section
are devoted to the explanation of
operadic cochain complexes for Koszul operads.
The main subject will be given in \S\ref{subsec:occ}.

We give a relative version of operadic twisting morphism 
explained in \cite[\S6.4]{LV}.
In this subsection $\catC$ and $\catE$ are taken generally 
as in \S\ref{sect:operad}.
%be the base symmetric monoidal category as in \S\ref{sect:operad}.
%
For an operad $(\opP,\mu_{\opP},\eta)$ and 
a cooperad $(\opC,\Delta_{\opC},\ve)$ in the base category $\catC$, consider
\[
 \catC(\opC,\opP) \seteq \{\catC(\opC(n),\opP(n))\}_{n\ge0},
\]
which is a $\sg_*$-object under the action 
\[
 (f \cdot \sigma)(x) \seteq f(x \cdot \sigma^{-1})\cdot \sigma
\]
for $f \in \catD(\opC(n),\opP(n))$, $x \in \opC(n)$ and $\sigma \in \sg_n$.

Now we define $\mu(f;g_1,\ldots,g_k;\sigma) \in \catC(\opC(n),\opP(n))$ 
for $\sigma \in \sg_n$, $f \in \catC(\opC(k),\opP(k))$ and 
$g_j \in \catC(\opC(i_j),\opP(i_j)$, $1 \le j \le k$
with $i_1+\cdots+i_k=n$ to be the following composition of morphisms.
\begin{align*}
 \opC(n) & \xrightarrow{\ \Delta_{\opC} \ } 
 (\opC \bcirc \opC)(n) \longsurj 
 \opC(k) \otimes \opC(i_1) \otimes \cdots \otimes \opC(i_k) \otimes \sg_n \\
&\xrightarrow{f \otimes g_1 \otimes \cdots \otimes g_k \otimes \sigma}
 \opP(k) \otimes \opP(i_1) \otimes \cdots \otimes \opP(i_k) \otimes \sg_n
 \longto (\opP \circ \opP)(n)
 \xrightarrow{\ \mu_{\opP} \ } \opP(n)
\end{align*}
Then we have

\begin{fct*}[{\cite[Propsoition 6.4.1]{LV}}]
$\catC(\opC,\opP)$ with $\mu$ the composition map is an operad in $\catC$.
\end{fct*}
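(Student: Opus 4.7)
The plan is to verify the three axioms of an operad in $\catC$ for the $\sg_*$-object $\catC(\opC,\opP)$: that the stated $\sg_n$-action is well-defined, that the composition map $\mu$ descends to a morphism of $\sg_*$-objects and is associative, and that there is a unit $\eta$ satisfying the unit axioms.

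First I would check that the formula $(f \cdot \sigma)(x) := f(x \cdot \sigma^{-1}) \cdot \sigma$ defines a right $\sg_n$-action on $\catC(\opC(n),\opP(n))$; this is straightforward from the associativity of the $\sg_n$-actions on $\opC(n)$ and on $\opP(n)$ separately. I would then verify that the formula defining $\mu(f; g_1, \ldots, g_k; \sigma)$ is equivariant in its inputs, so that it descends to a morphism $(\catC(\opC,\opP) \circ \catC(\opC,\opP))(n) \to \catC(\opC(n),\opP(n))$ of $\sg_*$-objects. The required equivariance reduces to the $\sg_n$- and $\sg_k$-equivariance built into $\Delta_\opC$ and $\mu_\opP$. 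The unit map $\eta: \opI \to \catC(\opC,\opP)$ is then determined in arity $1$ by the composite $\opC(1) \xrightarrow{\ve} \opI(1) = \one \xrightarrow{\eta_\opP} \opP(1)$, and is trivial in all other arities.

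Next comes associativity, that is, the commutativity of the pentagon for $\mu$ in the $\circ$-monoidal structure on $\catC^{\sg}$. Unwinding both composites, each one applies a three-fold cooperad decomposition of $\opC$ (via either $\Delta_\opC \bcirc \id$ or $\id \bcirc \Delta_\opC$), plugs in the given morphisms $f$, $g_j$, $h_{j,\ell}$ componentwise, and then reassembles in $\opP$ by a three-fold operadic composition (via either $\mu_\opP \circ \id$ or $\id \circ \mu_\opP$). Coassociativity of $\Delta_\opC$ identifies the two three-fold decompositions, associativity of $\mu_\opP$ identifies the two three-fold compositions, and the permutation bookkeeping is handled by naturality of the associators. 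The two unit axioms reduce in the same spirit to the counit axioms of $\opC$ on the cooperad side combined with the unit axiom of $\opP$ on the operad side: the factor $\opI(1) = \one$ collapses out of the decomposition and the corresponding insertion of $\eta_\opP$ collapses out of the composition.

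The main obstacle will be the bookkeeping in the associativity verification: the construction of $\mu$ couples a cooperad decomposition via $\bcirc$ to an operad composition via $\circ$, and these are two distinct monoidal operations on $\catC^{\sg}$. One must unwind the defining (co)limit presentations of $\circ$ and $\bcirc$ and check that the associator isomorphisms on each side are correctly interchanged by the hom-pairing $\catC(-, -)$. Once this translation is in place, the pentagon collapses cleanly to coassociativity of $\Delta_\opC$ combined with associativity of $\mu_\opP$, which is exactly what one expects for a \emph{convolution operad}.
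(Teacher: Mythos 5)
The paper gives no proof of this statement---it is quoted as a Fact with a citation to \cite[Proposition 6.4.1]{LV}---and your outline is exactly the direct verification carried out there: equivariance of the defining formula for $\mu$, associativity from coassociativity of $\Delta_{\opC}$ together with associativity of $\mu_{\opP}$, and the unit $\opI \to \catC(\opC,\opP)$ given in arity one by $\eta_{\opP}\circ\ve$, with the unit axioms following from the counit axiom of $\opC$ and the unit axiom of $\opP$. Your plan is correct and takes essentially the same route as the cited source.
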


Following \cite[\S6.4]{LV} we use 

\begin{dfn*}
The operad $\catC(\opC,\opP)$ is called the \emph{convolution operad}.
\end{dfn*}

Now we change the base category to the dg category $\catD$ 
explained in the beginning of this section.
Each object $X$ in $\catD$ has a $\bbZ$-grading $X_{\bullet}$ 
and a differential $d: X_{\bullet} \to X_{\bullet-1}$.
A $\sg_*$-object $M=\{M(n)\}_{n\ge0}$ in $\catD$
consists of objects $M(n)$ in $\catD$ with $\sg_n$-action.

\begin{dfn*}
For a homogeneous morphism $f:M \to N$ in $\catD^{\sg}$ of degree $|f|$,
define its derivative $\partial(f)$ to be 
\[
 \partial(f) \seteq d_N \circ f - (-1)^{|f|}f \circ d_M.
\]
\end{dfn*}

The importance of the convolution operad lies in 

\begin{fct}[{\cite[Proposition 6.4.5]{LV}}]\label{fct:convol:dgla}
The convolution operad $(\catD(\opC,\opP),\partial)$ 
is an operad in the dg category $\catD$.
It is also equipped with a pre-Lie product 
\[
 f \star g: \opC \xrightarrow{ \Delta_{(1)} } 
 \opC \circ_{(1)} \opC \xrightarrow{f \circ_{(1)} g} 
 \opP \circ_{(1)} \opP \xrightarrow{\mu_{(1)}} \opP
\]
for $f,g \in \Hom(\opC,\opP)$,
so that 
$(\catD(\opC,\opP),[\,,\,],\partial)$ with 
$[f,g]:=f \star g - g \star f$ is a dg Lie algebra.
\end{fct}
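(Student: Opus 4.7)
The plan is to verify the three assertions in order: that $(\catD(\opC,\opP),\partial)$ is a dg operad, that $\star$ is a pre-Lie product, and that the antisymmetrized bracket is a dg Lie bracket. My strategy is to transcribe the argument of \cite[Proposition~6.4.5]{LV} into the relative setting, relying only on the fact that $\Delta_\opC$, $\mu_\opP$, the $\sg_n$-actions, and the symmetric monoidal functor $\catC \to \catD$ all preserve the grading and commute with the differentials.

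First, I would check that $\partial$ squares to zero and commutes with the $\sg_n$-action on each $\catD(\opC(n),\opP(n))$; both are immediate from $d_\opC^2 = 0 = d_\opP^2$ and the equivariance of $d_\opC, d_\opP$. Then I would show that $\partial$ is a derivation for the operadic composition $\mu$: expanding $\mu(f;g_1,\ldots,g_k;\sigma)$ as the composite displayed just before the Fact, the Leibniz rule propagates precisely because $\Delta_\opC$ and $\mu_\opP$ are chain maps of $\sg_*$-objects in $\catD$.

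Second, for the pre-Lie identity, I would expand both $(f \star g) \star h$ and $f \star (g \star h)$ via the iterated infinitesimal decomposition $\opC \to \opC \circ_{(1)} \opC \circ_{(1)} \opC$. By coassociativity of $\Delta_{(1)}$ and associativity of $\mu_{(1)}$, the expression $(f \star g) \star h$ splits into two summands: a \emph{nested} summand, in which $h$ is inserted into an input of $g$ already inserted into an input of $f$, and a \emph{parallel} summand, in which $g$ and $h$ are inserted into two distinct inputs of $f$. The nested summand coincides with $f \star (g \star h)$, while the parallel summand is manifestly invariant under swapping $(g,h)$. This yields the right-symmetric identity
\[
 (f \star g) \star h - f \star (g \star h) \deq (f \star h) \star g - f \star (h \star g),
\]
which is precisely the defining identity of a (graded) pre-Lie algebra.

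Third, the antisymmetrization of any pre-Lie product yields a Lie bracket by a standard algebraic computation, so $[\,,\,]$ satisfies the Jacobi identity. Compatibility of $\partial$ with $[\,,\,]$ follows from its compatibility with $\star$, and the latter in turn follows because $\Delta_{(1)}$ and $\mu_{(1)}$ are chain maps. The main technical nuisance will be bookkeeping the Koszul signs that appear when a graded morphism $f$ is applied to a component of the iterated decomposition of $x \in \opC$: checking that these signs align on both sides of the pre-Lie identity is routine but tedious, and is the only place where the dg structure contributes anything beyond the underlying graded-algebra identity.
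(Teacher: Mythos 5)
The paper offers no proof of this statement---it is quoted verbatim as a Fact from \cite[Proposition 6.4.5]{LV}---and your reconstruction is exactly the standard argument given there: derivation property of $\partial$ from $\Delta_{(1)}$ and $\mu_{(1)}$ being chain maps, the nested/parallel splitting of $(f \star g) \star h$ giving right-symmetry of the associator, and antisymmetrization of a pre-Lie product yielding a Lie bracket. The only imprecision is that your displayed right-symmetric identity should carry the Koszul sign $(-1)^{|g||h|}$ on the right-hand side in the graded setting (the parallel summand is symmetric in $(g,h)$ only up to that sign), but you flag the sign bookkeeping yourself, so this is cosmetic.
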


For the definition of pre-Lie algebra, see \cite[\S1.4]{LV}.

A solution $\alpha \in \catD(\opC,\opP)$ of the Maurer-Cartan equation
\[
 \partial(\alpha) + \alpha \star \alpha = 0
\]
of degree $-1$ is called an \emph{operadic twisting morphism}.
For such $\alpha$, one can define a complex
\begin{align*}
 \opP \circ_{\alpha} \opC = 
 (\opP \circ \opC,d_{\opP \circ \opC}+d^l_{\alpha})
\end{align*}
called the \emph{left twisting composite product},
and another one 
\begin{align}\label{eq:ltcp}
 \opC \circ_{\alpha} \opP = 
 (\opC \circ \opP,d_{\opC \circ \opP}+d^r_{\alpha})
\end{align}
called the \emph{right twisting composite product}.
See \cite[\S6.4.5]{LV} for the detail.

%%%%%%%%%%%%%%%%%%%%%%%%%%%%%%%%%%%%%%%%%%%%%%%%%%%%%%%%%%%%%%%%%%%%
\subsection{Operadic bar and cobar construction}

We explain a relative version of 
the operadic bar and cobar constructions given in \cite[\S6.5]{LV}.

Let $\catD$ be a symmetric monoidal dg category linear over a field $\bbk$.
The operadic bar construction means the following functor
\[
 \opBar: \ \{\text{augmented operads in $\catD$}\} \longto 
           \{\text{coaugmented cooperads in $\catD$}\}. %conilpotent 
\]

%Here an operad $\opP=(\opP,\mu,\eta)$ is called \emph{augmented} 
%if it is equipped with an operad morphism $\ve: \opP \to I$
%such that $\eta \ve=\id$.
%The kernel of $\ve$ is denoted by $\opP_{\aug}$
%and called the \emph{augmentation ideal}.
%
%A cooperad is called \emph{conilpotent} if
%its coradical filtration is exhaustive.
%See \cite[\S5.8.5]{LV} for the detailed definition of conilpotency.

Let us explain the definition of $\opBar$.
Let $\bbk s$ be the one-dimensional graded vector space spanned by $s$
with grading $|s|=1$.
The suspension of $V \in \catD$ is defined to be
\[
 s V := \bbk s \otimes V.
\]
In particular we have $(S V)_i \simeq V_{i-1}$.

For an augmented operad $\opP=(\opP,\mu_{\opP},\eta,\ve)$ in $\catD$,
we set
\[
 \opBar(\opP) := \bigl(\ol{\opF}(s \opP_{\aug}),d_1+d_2\bigr).
\]
Here $\ol{\opF}$ denotes the cofree cooperad functor
explained around Definition \ref{dfn:cofree-coop},
and we applied it to the suspended $\sg_*$-object $s \opP_{\aug}$,
where $\opP_{\aug}$ is the augmentation ideal
(see Definition \ref{dfn:augmented-operad}).

The differentials $d_1$ and $d_2$ are defined as follows.
$d_1$ is the map induced on $\ol{F}(s\opP_{\aug})$ from 
the differential $d_{\opP}$ on $\opP$.
Thus we have $d_1^2=0$.
$d_2$ is given by the following composite.
\begin{align*}
 \ol{\opF}(s \opP_{\aug}) 
&\longsurj
 s \opP_{\aug} \circ_{(1)} s \opP_{\aug} \longsimto
 (\bbk s \otimes \opP_{\aug}) \otimes (\bbk s \otimes \opP_{\aug})
\\
&\xrightarrow{\, \id \otimes \tau \otimes \id \,}
 (\bbk s \otimes \bbk s) \otimes (\opP_{\aug} \otimes \opP_{\aug})
 \xrightarrow{\, \mu_s \otimes \mu_{\opP} \,}
 \bbk s \otimes \opP_{\aug}.
\end{align*}
Here $\tau$ is the permutation map,
and $\mu_s: \bbk s \otimes \bbk s \to \bbk s$ is the map of degree $-1$
defined by $\mu_s(s \otimes s) = s$.
By \cite[Proposition 6.5.1]{LV} we know $d_2^2=0$.
We also have $d_1d_2+d_2d_1=0$,
so that $\opBar(\opP)$ is a cooperad in $\catD$.

The cooperad $\opBar(\opP)$ obtained is coaugmented
by the coaugmentation on the cofree cooperad.
%Moreover it is conilpotent.
%see \cite[\S5.8.6]{LV} for the proof 
%that the cofree cooperad is conilpotent.
In summary, the functor $\opBar$ is well-defined.

Dually one can construct a functor
\[
 \opCob: \ \{\text{coaugmented cooperads in $\catD$}\} \longto 
           \{\text{augmented operads in $\catD$}\}.
\]
For an coaugmented cooperad $\opC=(\opC,\Delta_{\opC},\ve,\eta)$ in $\catD$,
we set
\[
 \opCob(\opC) := \bigl(\opF(s^{-1} \opC_{\coaug}),d_1+d_2\bigr).
\]
Here $\opF$ denotes the free operad functor,
and $s^{-1}$ denotes the inverse suspension, 
i.e., 
the graded vector space $\bbk s^{-1}$ generated by $s^{-1}$ with $|s^{-1}|=-1$.
$\opC_{\coaug}$ denotes the coaugmentation coideal of $\opC$,
see Definition \ref{dfn:coaug-coop}.

The differential $d_1$ is induced by the one $d_{\opC}$ on the cooperad $\opC$.
The differential $d_2$ is defined by
\begin{align*}
 \bbk s^{-1} \otimes \opC_{\coaug} 
&\xrightarrow{ \, \Delta_s \otimes \Delta_{\opC} \, }
 (\bbk s^{-1} \otimes \bbk s^{-1}) \otimes (\opC_{\coaug} \bcirc_{(1)} \opC_{\coaug})
\\
&\xrightarrow{ \, \id \otimes \tau \otimes \id \, }
 (s^{-1} \otimes \opC_{\coaug}) \bcirc_{(1)} (\bbk s^{-1} \otimes \opC_{\coaug})
\\
&\longsimto
 s^{-1} \opC_{\coaug} \bcirc_{(1)} s^{-1} \opC_{\coaug}
 \longinj
 \opF(s^{-1} \opC_{\coaug}).
\end{align*}

Now we can state the fundamental result 
on the operadic bar and cobar constructions.

\begin{fct*}[{\cite[Theorem 6.5.7]{LV}}]
The functors $\opBar$ and $\opCob$ form an adjoint pair
\[
 \opCob: \{\text{coaugmented cooperads in $\catD$}\} \longbij
           \{\text{augmented operads in $\catD$}\}   :\opBar.
\]
More precisely, 
for an augmented operad $\opP$ and a coaugmented cooperad $\opC$ 
in $\catD$, there exists a natural isomorphism
\[
 \cOp_{\catD}(\opCob(\opC),\opP) \simeq 
 \cCo_{\catD}(\opC,\opBar(\opP)).
\]
\end{fct*}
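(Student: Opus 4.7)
The plan is to prove the adjunction by introducing the set $\mathrm{Tw}(\opC,\opP)$ of operadic twisting morphisms as a mediating object, and establishing natural bijections
\[
 \cOp_{\catD}(\opCob(\opC),\opP) \;\simeq\; \mathrm{Tw}(\opC,\opP) \;\simeq\; \cCo_{\catD}(\opC,\opBar(\opP)),
\]
where $\mathrm{Tw}(\opC,\opP)$ denotes the set of $\alpha \in \catD(\opC,\opP)$ of degree $-1$ satisfying the Maurer--Cartan equation $\partial(\alpha)+\alpha\star\alpha=0$, vanishing on $\opI \subset \opC$ and composing to zero with the augmentation $\opP \to \opI$. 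Recall from Fact \ref{fct:convol:dgla} that $\catD(\opC,\opP)$ is a dg Lie algebra, so the Maurer--Cartan equation makes sense.

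For the first bijection, I would use the freeness property of $\opCob(\opC) = \opF(s^{-1}\opC_{\coaug})$. By the adjunction defining the free operad $\opF$, an operad morphism $\opCob(\opC) \to \opP$ is determined by a morphism of $\sg_*$-objects $s^{-1}\opC_{\coaug} \to \opP$ of degree $0$, which is the same data as a morphism $\opC_{\coaug} \to \opP$ of degree $-1$, and extends uniquely to $\alpha:\opC\to\opP$ vanishing on the coaugmentation $\opI$. The remaining content is that compatibility with the differentials $d_1+d_2$ on $\opCob(\opC)$ translates precisely into the Maurer--Cartan equation: the $d_1$-compatibility gives $\partial(\alpha)=0$ modulo $\alpha\star\alpha$, while the $d_2$-compatibility, computed against the definition of $d_2$ involving $\Delta_{\opC}$ and the suspension map $\Delta_s$, introduces the quadratic term $\alpha\star\alpha$. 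I would unwind this by evaluating on generators of the free operad and invoking the universal property of the partial decomposition.

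For the second bijection, I would dualize the previous argument. Using the cofreeness of $\opBar(\opP) = \ol{\opF}(s\opP_{\aug})$ stated after Definition \ref{dfn:cofree-coop}, a cooperad morphism $\opC \to \opBar(\opP)$ is determined by its projection onto $s\opP_{\aug}$, i.e., by a morphism $\opC \to \opP_{\aug}$ of degree $-1$, which extends uniquely to $\alpha:\opC\to\opP$ composing to zero with the augmentation. Again, compatibility of such a map with the bar differentials $d_1+d_2$ on the target must be checked to be equivalent to the Maurer--Cartan equation for $\alpha$; here the $d_2$ on $\opBar(\opP)$, built from $\mu_{\opP}$ and $\mu_s$, contributes the quadratic term $\alpha\star\alpha$ in the opposite direction.

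Finally I would verify that composing the two bijections is natural in $\opC$ and $\opP$: both constructions are explicit and functorial in the coefficients, and naturality in each variable reduces to the naturality of the free/cofree adjunctions for $\opF$ and $\ol{\opF}$. The main obstacle I foresee is the bookkeeping of signs and shuffles in matching $d_2$ on $\opCob(\opC)$ and on $\opBar(\opP)$ with the quadratic term $\alpha\star\alpha$ of the convolution pre-Lie product; in particular, one must be careful that the suspension/desuspension maps $\mu_s$ and $\Delta_s$ contribute exactly the degree shift needed to turn the degree $-1$ morphism $\alpha$ into a degree $0$ morphism on the shifted generators. Once the sign conventions are fixed (following \cite[\S6.4]{LV}), the equivalence is a direct computation on generators.
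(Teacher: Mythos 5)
The paper states this as a Fact imported verbatim from \cite[Theorem 6.5.7]{LV} and gives no proof of its own; your argument, factoring both Hom-sets through the set of operadic twisting morphisms $\mathrm{Tw}(\opC,\opP)$ and using the freeness of $\opCob(\opC)=\opF(s^{-1}\opC_{\coaug})$ and the cofreeness of $\opBar(\opP)=\ol{\opF}(s\opP_{\aug})$ to reduce the differential compatibility to the Maurer--Cartan equation, is exactly the proof given in that cited source. This is correct and is essentially the same approach, so there is nothing to add beyond the standard caveat (also present in \cite{LV}) that cofreeness of $\ol{\opF}$ holds within conilpotent cooperads, which is the setting the paper's colimit construction of $\ol{\opF}$ implicitly provides.
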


\begin{fct*}[{\cite[Theorem 6.6.3]{LV}}]
The unit $\eta: \opBar \opCob \opC \simto \opC$ 
is a quasi-isomorphism of cooperads in $\catD$.
Dually, the counit $\ve: \opCob \opBar \opP \simto \opP$ 
is a quasi-isomorphism of operads in $\catD$.
\end{fct*}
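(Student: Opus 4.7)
The plan is to establish the counit statement $\ve: \opCob \opBar \opP \to \opP$ first; the unit statement $\eta: \opBar \opCob \opC \to \opC$ follows by the evident duality, replacing the free operad functor $\opF$ by the cofree cooperad functor $\ol{\opF}$ and interchanging operads with cooperads throughout, with the argument transferring without essential change.

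First, I would recast the counit in the language of operadic twisting morphisms. The bar construction carries a universal twisting morphism $\pi: \opBar \opP \to \opP$ of degree $-1$ satisfying $\partial(\pi) + \pi \star \pi = 0$, and the counit $\ve$ is the image of $\pi$ under the cobar-bar adjunction of the preceding fact, extended multiplicatively out of $\opF(s^{-1} \opBar(\opP)_{\coaug})$. It then suffices to prove that the right twisted composite product $\opBar\opP \circ_\pi \opP$ of \eqref{eq:ltcp} is acyclic, since the standard comparison theorem for operadic Koszul complexes converts the acyclicity of this two-sided bar construction into the quasi-isomorphism statement for $\ve$.

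Second, I would install a filtration on $\opBar\opP \circ_\pi \opP$ indexed by the total number of internal vertices in the underlying two-level trees. In each arity this filtration is exhaustive and bounded, so the associated spectral sequence converges strongly. Passing to $E^0$ kills the internal differential $d_\opP$ of $\opP$ as well as the $d_1$ component of the bar differential induced by $d_\opP$, leaving only the structural $d_2$ piece built from the operadic composition $\mu_\opP$ of the relevant arity-$2$ corollas.

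Third, and this is the core of the argument, I would construct an explicit contracting homotopy $h$ on the $E^0$ page. Combinatorially, $E^0$ decomposes as a sum over trees grafted in two stories, decorated by elements of $s\opP_{\aug}$ on the upper story and of $\opP$ on the lower story, and $h$ acts by detaching the lowest bar-vertex and absorbing its operation into the adjacent $\opP$-factor below via $\mu_\opP$. A direct check then yields $\partial h + h \partial = \id$ away from the basepoint in arity $1$, giving the desired acyclicity and hence the quasi-isomorphism of $\ve$. The main obstacle will be the sign bookkeeping: juggling the two suspensions $s$ and $s^{-1}$ together with the shuffles entering $\Delta_{(1)}$ and $\circ_{(1)}$ is delicate, and if this becomes unwieldy one may instead invoke \cite[Theorem 6.6.3]{LV} directly, noting that its proof, formulated for operads in a single symmetric monoidal dg category, transfers verbatim to the present relative framework because the external tensor product $\otimes: \catC \times \catE \to \catE$ preserves the colimits used to construct $\opBar$, $\opCob$, and the convolution pre-Lie structure of Fact \ref{fct:convol:dgla}.
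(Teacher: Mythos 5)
The paper does not actually prove this statement: it is imported wholesale as a Fact with the citation \cite[Theorem 6.6.3]{LV}, so there is no internal proof to compare your attempt against. Your sketch is, in substance, a reconstruction of Loday--Vallette's own argument --- the universal twisting morphism $\pi:\opBar\opP\to\opP$, reduction of the quasi-isomorphism of $\ve$ to the acyclicity of the twisted composite product $\opBar\opP\circ_{\pi}\opP$ via the comparison theorem for twisting morphisms, and then a filtration with a contracting homotopy on the associated graded --- so the route is the standard and correct one, and your closing fallback of simply invoking \cite[Theorem 6.6.3]{LV} is exactly what the paper does.

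Two points in the sketch are nevertheless off or underspecified. First, filtering by the total number of internal vertices does not kill the internal differential $d_{\opP}$ on $E^{0}$: that differential preserves the vertex count, so it survives to the associated graded; what the filtration must isolate is the single structural summand on which the extra-degeneracy homotopy is defined, and one must then check that the homotopy commutes with the surviving $d_{\opP}$. More seriously, boundedness of this filtration in a fixed arity fails unless $\opP$ is reduced or connected weight-graded: if $\opP(0)\neq 0$ or $\opP(1)\neq\bbk$, a tree with a fixed number of leaves can carry arbitrarily many vertices, and this connectivity hypothesis (which Loday--Vallette impose) appears neither in your argument nor in the paper's statement. Second, the ``evident duality'' yielding the unit statement is not free: $\opBar$ is built from the cofree cooperad (invariants, products) while $\opCob$ is built from the free operad (coinvariants, coproducts), and the dual filtration on $\opBar\opCob\opC$ is exhaustive only when $\opC$ is conilpotent; the unit is a quasi-isomorphism only under that hypothesis, which the paper's notion of coaugmented cooperad does not automatically guarantee. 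Making these two hypotheses explicit would close the gaps; the rest of the outline is sound.
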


%%%%%%%%%%%%%%%%%%%%%%%%%%%%%%%%%%%%%%%%%%%%%%%%%%%%%%%%%%%%%%%%%%%%%%
\subsection{Operadic (co)chain complexes}
\label{subsec:occ}

Following \cite[\S12.1]{LV}, we recall a general construction of 
complexes computing the (co)homology of algebras over quadratic operads.
We take $\catC=\catE=\dgkMod$ as in the beginning of this section.

For a graded $\sg_*$-module $M$ in $\catC$,
consider the free operad  $\opF(M)$ in $\catC$.
It has a \emph{weight grading} given by 
\begin{align*}
&w(\id):= 0, \quad w(\xi) ;= 1 \ (\xi \in M(n)),
\\
&w(\xi;\eta_1,\ldots,\eta_k;\sigma):=w(\xi)+w(\eta_1)+\cdots+w(\eta_k).
\end{align*}
Here we used the set-theoretic notation \eqref{eq:compos:elem}
and $\id \in \one = \bbk \subset \opF(M)(1)$.
We denote by $\opF(M)^{(m)}$ the submodule of weight $m$.

Now an operadic \emph{quadratic data}
is a pair $(E,R)$ of a graded $\sg_*$-module $E$ 
and a graded sub $\sg_*$-module $R \subset \opF(E)^{(2)}$.

Associated to such $(E,R)$, we define the \emph{quadratic operad} to be 
\[
 \opP(E,R) := \opF(E)/(R),
\]
the quotient operad of $\opF(E)$ by the operadic ideal generated by $R$.
Thus it is a universal operad among the quotient operads 
$\opF(E) \surj \opP$ such that 
the composition $(R) \inj \opF(E) \surj \opP$ is trivial.
An operad which is isomorphic to such $\opP(E,R)$
is called quadratic.
The classical operads in Example \ref{eg:classical-operads} are thus 
quadratic.

Dually we define the \emph{quadratic cooperad} $\opC(E,R)$ to be
the cooperad which is universal among sub-cooperads 
$\opC \inj \ol{\opF}(E)$ such that the composition 
$\opC \inj \ol{\opF}(E) \surj \ol{\opF}(E)^{(2)}$
is trivial.

\begin{dfn*}
The \emph{Koszul dual cooperad} of the quadratic operad $\opP(E,R)$ 
is defined to be the quadratic cooperad 
\[
 \opC(s E,s^2 R).
\]
Here $s$ denotes the degree shifting in $\catC=\dgkMod$.
\end{dfn*}

Note that $\opP(E,R)$ and $\opC(s E,S^2 R)$ are both 
$\sg_*$-modules in $\catC=\dgkMod$ with trivial differentials.
We now have 

\begin{fct*}[{\cite[Lemma 7.4.1]{LV}}]
Define a morphism $\kappa$ of $\sg_*$-modules in $\catC$ to be
\[
 \kappa: \opC(s E, s^2 R) \longsurj s E \xrightarrow{s^{-1}} E
 \longinj \opP(E,R).
\]
Then $\kappa \star \kappa =0$, where $\star$ is the pre-Lie product 
in Fact \ref{fct:convol:dgla}.
\end{fct*}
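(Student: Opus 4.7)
The plan is to exploit the weight grading on both the free operad $\opF(E)$ and the cofree cooperad $\ol{\opF}(sE)$, and to show that $\kappa\star\kappa$ is forced to be concentrated in weight $2$, where it factors through the defining relations $R$ and thus vanishes in $\opP(E,R)$.

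First I would observe that, by construction, $\kappa$ is non-trivial only on the weight-$1$ component of $\opC(sE,s^2R)$: the map $\kappa$ is the composite
\[
 \opC(sE,s^2R)\longsurj sE\xrightarrow{\,s^{-1}\,}E\longinj \opP(E,R),
\]
and the projection to $sE$ kills the summands of weight $0$ (the coaugmentation part $\opI$) and of weight $\geq 2$. Because the partial composition $\kappa\circ_{(1)}\kappa$ is a tensor of two copies of $\kappa$, it vanishes unless both internal and external factors have weight $1$. Since the infinitesimal decomposition $\Delta_{(1)}$ respects the weight grading (it decomposes a weight-$n$ element into pairs whose weights sum to $n$), the composite $\kappa\star\kappa=\mu_{(1)}\circ(\kappa\circ_{(1)}\kappa)\circ\Delta_{(1)}$ is supported entirely in weight $2$. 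This reduces the statement to checking vanishing on $\opC(sE,s^2R)^{(2)}=s^2R$.

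Next I would identify the weight-$2$ computation. By the universal property of $\opC(sE,s^2R)$ as a sub-cooperad of $\ol{\opF}(sE)$, its weight-$2$ piece is, by definition, the sub-$\sg_*$-module $s^2R\subset sE\circ_{(1)}sE$ realized inside $\ol{\opF}(sE)^{(2)}$. On such an element the infinitesimal decomposition $\Delta_{(1)}$ reduces to the canonical inclusion $s^2R\inj sE\circ_{(1)}sE$; this is a direct consequence of the inductive definition of $\Delta$ for the cofree cooperad given before Definition \ref{dfn:cofree-coop} restricted to its quadratic sub-cooperad. Applying $\kappa\circ_{(1)}\kappa$ then amounts, up to the isomorphism $(\mu_s\otimes\mu_s)^{-1}$ absorbing the suspension shifts, to the natural map $s^2R\to E\circ_{(1)}E\subset \opF(E)^{(2)}$ whose image is precisely the copy of $R$ inside $\opF(E)^{(2)}$. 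Finally, $\mu_{(1)}:\opP(E,R)\circ_{(1)}\opP(E,R)\to \opP(E,R)$ is the operadic composition through the quotient, and by definition of $\opP(E,R)=\opF(E)/(R)$ the subspace $R$ maps to zero.

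The main obstacle I anticipate is bookkeeping, not conceptual: one must track the Koszul-type sign obtained from the map $\id\otimes\tau\otimes\id$ and the suspension multiplication $\mu_s:\bbk s\otimes\bbk s\to\bbk s$ appearing implicitly through the Koszul dual construction, and one must verify that the identification $s^2R\simeq R$ produced by $\kappa\circ_{(1)}\kappa$ is literally the inclusion $R\inj \opF(E)^{(2)}$, as opposed to a sign-twisted version. Once those identifications are made carefully---most economically by using the set-theoretic notation $(\xi;\eta_1,\ldots,\eta_k;\sigma)$ introduced in \eqref{eq:compos:elem} together with the weight filtration---the two steps above combine to give $\kappa\star\kappa=0$.
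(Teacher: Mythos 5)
The paper offers no proof of this Fact, citing \cite[Lemma 7.4.1]{LV} instead; your argument is correct and is essentially the standard proof given there. The weight-grading reduction (only the weight-$1\circ_{(1)}$weight-$1$ term of $\Delta_{(1)}$ survives after applying $\kappa\circ_{(1)}\kappa$, landing in the image of $R$ inside $\opF(E)^{(2)}$, which dies in the quotient $\opP(E,R)=\opF(E)/(R)$) is exactly the intended mechanism.
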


Thus $\kappa$ is an operadic twisting morphism.
Now recall the construction of the left twisting composite product 
\eqref{eq:ltcp}.

\begin{dfn*}
Let $\opP=\opP(E,R)$ be a quadratic operad
and $\opC= \opC(s E,s^2 R)$ be the associated Koszul dual cooperad.
\begin{enumerate}
\item
The complex $\opC  \circ_{\kappa} \opP$ is called 
the \emph{Koszul complex} of the operad $\opP$.
\item
$\opP$ is called a \emph{Koszul operad} 
if the Koszul complex $\opC  \circ_{\kappa} \opP$ is acyclic.
\end{enumerate}
\end{dfn*}

The bar-cobar construction explained in the previous subsection implies

\begin{fct*}[{\cite[Theorem 7.4.2]{LV}}]
For a quadratic cooperad $\opP=\opP(E,R)$, the followings are equivalent.
\begin{enumerate}
\item 
$\opP$ is Koszul.
\item
The right twisting composite product 
$\opP  \circ_{\kappa} \opC$ with $\opC=\opC(s E, s^2 R)$ is acyclic.
\item
The canonical inclusion $\opC \inj \opBar(\opC)$ 
induced by the adjunction $(\opCob,\opBar)$ is a quasi-isomorphism.
\item
The canonical projection $\opC \inj \opCob(\opC)$ is a quasi-isomorphism.
\end{enumerate}
\end{fct*}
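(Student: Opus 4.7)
The plan is to prove the four conditions equivalent via a cycle of implications, exploiting throughout the weight gradings inherited from the free and cofree (co)operad constructions. Both $\opP=\opP(E,R)$ and $\opC=\opC(sE,s^2R)$ are naturally weight-graded; the twisting morphism $\kappa$ has internal degree $-1$ and weight $+1$, so both $\opC\circ_{\kappa}\opP$ and $\opP\circ_{\kappa}\opC$ decompose as direct sums of finite subcomplexes indexed by total weight $n\ge 0$. In weights $0$ and $1$ these subcomplexes are trivially acyclic in positive homological degree, so the question of acyclicity reduces to the weight-$n$ piece for each $n\ge 2$.

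For (1)$\Leftrightarrow$(2) I would invoke the operadic comparison lemma of \cite[\S6.7]{LV}: for a weight-connected twisting morphism between a conilpotent cooperad and an augmented operad, the left and right twisting composites are quasi-isomorphic to the identity simultaneously. The proof is a filtration-by-weight argument whose two $E^1$-pages coincide up to a symmetry of the bigrading, which yields the equivalence.

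For (1)$\Leftrightarrow$(3), the adjunction $(\opCob,\opBar)$ applied to $\kappa$ produces canonical morphisms $\iota\colon\opC\to\opBar(\opP)$ of cooperads and $\pi\colon\opCob(\opC)\to\opP$ of operads. Filter $\opBar(\opP)=(\ol{\opF}(s\opP_{\aug}),d_1+d_2)$ by the weight of $\opP_{\aug}$, equivalently by the number of internal vertices of the underlying tree monomials. Since $d_2$ lowers this weight by one, the associated spectral sequence has its $E^1$-page identified in each arity and weight with the corresponding piece of $\opC\circ_{\kappa}\opP$. Hence $\iota$ is a quasi-isomorphism iff this $E^1$-page collapses onto $\opC$ in weight zero, equivalently iff the Koszul complex is acyclic. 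The equivalence (3)$\Leftrightarrow$(4) is then formal from the bar-cobar quasi-isomorphism theorem cited just above the statement: the unit $\opC\to\opBar\opCob(\opC)$ and counit $\opCob\opBar(\opP)\to\opP$ being always quasi-isomorphisms, a triangular-identity diagram chase shows that $\iota$ is a quasi-isomorphism iff $\pi$ is.

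The principal obstacle is the identification, within step (1)$\Leftrightarrow$(3), of the $E^1$-page of the weight spectral sequence on $\opBar(\opP)$ with the Koszul complex. One must unwind the cofree cooperad differential $d_2$, which is built from the partial composition of $\opP$ on adjacent suspended factors with a sign from $\mu_s$, and match it arity by arity with the twisting differential $d^r_{\kappa}$. The bookkeeping of suspensions, Koszul signs, and $\sg_*$-equivariance is the substantive combinatorial content; once the identification is in place, the remaining implications follow formally from the $(\opCob,\opBar)$ adjunction and the bar-cobar resolution.
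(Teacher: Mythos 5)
First, a remark on the ground truth: the paper does not prove this statement at all --- it is imported verbatim as a Fact from \cite[Theorem 7.4.2]{LV} (and the paper's wording even contains typos: ``quadratic cooperad $\opP$'' should read ``quadratic operad'', and item (4) should be the canonical projection $\opCob(\opC)\surj\opP$, not a map out of $\opC$). So the only meaningful comparison is with the proof in Loday--Vallette, which runs through their fundamental theorem of operadic twisting morphisms (Theorem 6.6.1) together with the operadic comparison lemma of \S 6.7. Your overall architecture --- weight gradings, finite weight-homogeneous subcomplexes, the comparison lemma for (1)$\Leftrightarrow$(2), and reduction of (3) and (4) to acyclicity of twisted composite products --- is the right skeleton.

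However, your step (1)$\Leftrightarrow$(3) has a genuine gap. Filtering $\opBar(\opP)=(\ol{\opF}(s\opP_{\aug}),d_1+d_2)$ by the weight of $\opP_{\aug}$ does not produce the Koszul complex on the $E^1$ page: for a quadratic operad with zero internal differential one has $d_1=0$, so the associated graded carries the zero differential, $E^1=E^0=\ol{\opF}(s\opP_{\aug})$, and $d^1$ is just $d_2$ again --- the spectral sequence reproduces the bar complex and gains nothing. More fundamentally, the Koszul complex $\opC\circ_{\kappa}\opP$ contains $\opP$ itself as a tensor factor and is not a subquotient of $\opBar(\opP)$ in any natural way. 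The correct argument is to tensor up: the cooperad morphism $\iota$ induces a morphism of right twisted composite products $\opC\circ_{\kappa}\opP\to\opBar(\opP)\circ_{\pi}\opP$, where $\pi$ is the universal twisting morphism and the target is the always-acyclic bar resolution; the comparison lemma (this is where the connected weight grading is genuinely used) then gives that $\iota$ is a quasi-isomorphism iff $\opC\circ_{\kappa}\opP$ is acyclic. The same caveat undermines your (3)$\Leftrightarrow$(4): the cobar functor does not reflect (nor, without connectivity hypotheses, preserve) arbitrary quasi-isomorphisms of cooperads, so the triangular-identity diagram chase combined with the counit $\opCob\opBar(\opP)\simto\opP$ does not by itself close the loop. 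Loday--Vallette instead prove (4) equivalent to acyclicity of the left twisted composite $\opP\circ_{\kappa}\opC$ by the mirror-image comparison-lemma argument, and the cycle of equivalences closes through (1)$\Leftrightarrow$(2).
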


Now we can given the definition of operadic chain complex.

\begin{dfn*}
Let $\opP=\opP(E,R)$ be a quadratic operad and 
$\opC=\opC(s E,s^2 R)$ be its Koszul dual cooperad.
For a $\opP$-algebra $A$, we set
\[
 C^{\opP}_{\bullet}(A) := (\opC \circ_{\kappa}\opP)\circ_{\opP}A.
\]
\end{dfn*}

Here $\circ_{\opP}$ denotes the relative composite product,
which is defined to be the coequalizer
\[
 \xymatrix{
 M \circ \opP \circ N
 \ar@<0.5ex>[rr]^{\id \circ \lambda} \ar@<-0.5ex>[rr]_{\rho \circ \id} 
 & & M \circ N \ar[r] & M \circ_{\opP} N
 }
\]
for a right $\opP$-module $M$ with the action map 
$\rho:M \circ \opP \to M$
and a left $\opP$-module $N$ with the action map 
$\lambda: \opP \circ N \to N$.
Thus as a $\sg_*$-module we have 
$C^{\opP}_{\bullet}(A) \simeq \opC \circ A = \ol{S}(\opC,A)$,
and we can present
\[
 C^{\opP}_{\bullet}(A)  = (\ol{S}(\opC,A), d).
\]

For a binary quadratic operad $\opP$, 
we have the following explicit description of this complex.

\begin{fct}[{\cite[Proposition 12.1.1]{LV}}]\label{fct:occ=ccc}
Let $\opP=\opP(E,R)$ be a binary quadratic operad,
i.e., $E=(0,0,E,0,\ldots)$ as a $\sg_*$-module.
Then for a $\opP$-algebra $A$, we have 
\[
 C^{\opP}_{n}(A) = \opC(n+1) \otimes_{\sg_{n+1}}A^{\otimes (n+1)}
\]
with the differential
\begin{align*}
&d(\zeta \otimes (a_1,\ldots,a_{n+1}))
\\
&= \sum \xi \otimes (a_{\sigma^{-1}(1)},\ldots,a_{\sigma^{-1}(i-1)},
  \eta(a_{\sigma^{-1}(i)},a_{\sigma^{-1}(i+1)}),
  a_{\sigma^{-1}(i+2)},\ldots,a_{\sigma^{-1}(n+1)})
\end{align*}
for 
$\Delta_{(1)}\zeta = \sum(\xi;\id,\ldots,\id,\eta,\id,\ldots,\id;\sigma)$,
$\zeta \in \opC(n+1)$, $\xi \in \opC(n)$, 
$\eta \in \opC(2)=E$ and $\sigma \in \sg_{n+1}$.
\end{fct}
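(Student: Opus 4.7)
My plan has two stages: identify the underlying graded $\sg_*$-module of $C^{\opP}_{\bullet}(A)$, then compute its differential. For the module, the paragraph immediately preceding the statement already supplies $C^{\opP}_{\bullet}(A) \simeq \ol{S}(\opC,A) = \bigoplus_k (\opC(k)\otimes A^{\otimes k})_{\sg_k}$, so it suffices to show that in the binary case $\opC(n+1)$ is concentrated in homological degree $n$. I would argue via the weight grading: since $E$ is concentrated in arity $2$, any tree with $w$ internal vertices labelled by $E$ has exactly $w+1$ leaves, so $\opF(E)^{(w)}$ is supported in arity $w+1$ only. The Koszul dual cooperad $\opC(sE,s^2R)$ is a sub-cooperad of $\ol{\opF}(sE)$ inheriting this weight grading, and because the suspension $s$ has internal degree $1$, the weight-$n$ part of $\opC(n+1)$ sits in total degree $n$. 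Identifying the coinvariants $(\opC(n+1)\otimes A^{\otimes(n+1)})_{\sg_{n+1}}$ with the quotient notation $\opC(n+1)\otimes_{\sg_{n+1}}A^{\otimes(n+1)}$ is then tautological.

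For the differential, both $\opC$ and $\opP$ carry trivial internal differentials in the quadratic setting, so the entire differential on $\opC \circ_\kappa \opP$ equals the Koszul twist $d^r_\kappa$. Unwinding its definition, $d^r_\kappa$ factors as the composite
\[
\opC \circ \opP \xrightarrow{\Delta_{(1)} \circ \id} (\opC \circ_{(1)} \opC) \circ \opP \xrightarrow{(\id \circ_{(1)} \kappa) \circ \id} (\opC \circ_{(1)} \opP) \circ \opP \xrightarrow{\id \circ \mu_{\opP}} \opC \circ \opP.
\]
Since $\kappa$ is nonzero only on the arity-two component $\opC(2) \simeq sE$, only those summands of $\Delta_{(1)}\zeta$ whose inserted factor lies in $\opC(2)$ survive, and such a factor $\eta$ is sent to $s^{-1}\eta \in E \subset \opP(2)$. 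After passage to the relative composite product $-\circ_{\opP} A$, this $\opP(2)$-element acts on two factors of $A$ as the binary operation $\eta$ dictated by the $\opP$-algebra structure. Writing the surviving decomposition as $\Delta_{(1)}\zeta = \sum(\xi;\id,\ldots,\id,\eta,\id,\ldots,\id;\sigma)$ with $\xi\in \opC(n)$ and $\eta \in \opC(2)=E$ placed at the $i$-th slot, one reads off exactly the claimed formula. The occurrence of $\sigma^{-1}$ reflects that the shuffle $\sigma$ attached to the partial decomposition reorders the labelled leaves, whereas the $\sg_{n+1}$-action on $A^{\otimes(n+1)}$ permutes tensor positions.

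The main technical obstacle I anticipate is the sign bookkeeping forced by the Koszul sign rule, stemming both from the suspension-desuspension pair $s, s^{-1}$ and from moving degree-one generators past one another, combined with verifying $\sg_{n+1}$-equivariance carefully so that the formula descends unambiguously to the coinvariants. Once this bookkeeping is pinned down, the remainder is a routine comparison on decomposable elements $\zeta \otimes (a_1,\ldots,a_{n+1})$, using the combinatorics of partial decompositions in $\opC$.
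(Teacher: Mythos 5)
The paper states this result as a \textbf{Fact} imported verbatim from \cite[Proposition 12.1.1]{LV} and supplies no proof of its own, so there is no internal argument to compare against; your proposal essentially reconstructs the standard proof from that reference. Your two steps are the right ones and are correct: the weight--arity correspondence for a binary $E$ (a tree with $w$ binary vertices has $w+1$ leaves, and $n$ suspensions put $\opC(n+1)$ in degree $n$) identifies the underlying module, and since the quadratic data carry trivial internal differentials the entire differential of $(\opC\circ_{\kappa}\opP)\circ_{\opP}A$ reduces to $d^r_{\kappa}$, which survives only on those summands of $\Delta_{(1)}$ whose inserted factor lies in $\opC(2)\simeq sE$ where $\kappa$ is nonzero, yielding exactly the displayed formula up to the sign and $\sg_{n+1}$-equivariance bookkeeping you already flag as the remaining technical work.
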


Similarly, the operadic cochain complex is defined as follows.

\begin{dfn*}
Let $\opP=\opP(E,R)$ be a quadratic operad and 
$\opC=\opC(s E,s^2 R)$ be its Koszul dual cooperad.
For a $\opP$-algebra $A$ and an $A$-module $M$, we set
\[
 C_{\opP}^{\bullet}(A,M) := (\Hom(\opC \circ A,M),\partial_{\kappa}).
\]
where the differential is given by 
\[
 \partial_{\kappa}(g) := \partial(g)-(-1)^{|g|}d.
\]
Here $d$ is the differential of $C^{\opP}_{\bullet}(A)$,
and $\partial$ is given by
\[
 \ol{S}(\opC,A) \xrightarrow{\Delta} \ol{S}(\opC \circ \opC,A)
 \xrightarrow{\ol{S}(\kappa,\wt{\ve};g)}\ol{S}(\opC,A;M)
 \xrightarrow{\lambda_M} M.
\]
The map $\wt{\ve}=\ol{S}(\ve,\id): \ol{S}(\opC,A) \to \opI \circ A \simeq A$
is induced by the counit $\ve:\opC \surj \opI$.
\end{dfn*}

The comparison to Definition \ref{dfn:homology} is given by 

\begin{fct*}[{\cite[Theorem 12.4.3]{LV}}]
Let $\opP=\opP(E,R)$ be a quadratic operad in $\catC$,
$A$ be a $\opP$-algebra and $M$ be an $A$-module in $\catE$.
If the operad $\opP$ is Koszul and the complexes $A,M$ are bounded below,
then the cohomology  $H_{\opP}^\bullet(A,M)$ 
in Definition \ref{dfn:homology} is calculated 
by the complex $C_{\opP}^{\bullet}(A,M)$.
\end{fct*}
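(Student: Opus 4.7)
The plan is to construct an explicit cofibrant replacement $\opP_\infty(A) \simto A$ whose derivations complex with coefficients in $M$ is isomorphic, on the nose, to $C_{\opP}^{\bullet}(A,M)$; the conclusion will then follow from the preceding fact identifying $H_{\opP}^{\bullet}(A,M)$ with $H^{\bullet}\Der^{\opP}_A(Q_A,M)$ for any cofibrant replacement $Q_A$.

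The candidate is the \emph{Koszul resolution} $\opP_\infty(A) := (\opP \circ (\opC \circ A),\, d_{\mathrm{int}} + d_\kappa)$, the quasi-free $\opP$-algebra on the $\sg_*$-object $\opC \circ A$ whose differential combines the internal differentials of $\opP$, $\opC$ and $A$ with the twist $d_\kappa$ induced by the Koszul twisting morphism $\kappa:\opC \to \opP$. The augmentation $\opP_\infty(A) \to A$ is the natural projection killing the $\opC_{\coaug}$-part of the generators and evaluating $\opI \circ A \simto A$.

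Step 1 is the derivations identification. For any quasi-free $\opP$-algebra $\opP \circ V$, the adjunction between the free $\opP$-algebra functor and the forgetful functor $\fog$ yields $\Der^{\opP}_A(\opP \circ V, M) \simeq \Hom(V, M)$ as graded $\bbk$-modules. Applying this to $V = \opC \circ A$ and tracing the differential $d_\kappa$ through the adjunction produces exactly $(\Hom(\opC \circ A, M), \partial_\kappa) = C_{\opP}^{\bullet}(A,M)$. Step 2 is cofibrancy: being quasi-free on a $\sg_*$-cofibrant underlying object, $\opP_\infty(A)$ is cofibrant in the semi-model structure on ${}_{\opP}\catE$ cited at the beginning of this section.

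Step 3, which is the main step, is to prove that $\opP_\infty(A) \simto A$ is a quasi-isomorphism. The plan is to filter $\opP_\infty(A)$ by the weight in the $\opC$-factor. On the associated graded, the only surviving piece of the differential is the Koszul differential of the right twisting composite product $\opP \circ_\kappa \opC$ of \eqref{eq:ltcp}, tensored along the $A$-slots with the identity. The Koszulness of $\opP$ supplies acyclicity of $\opP \circ_\kappa \opC$ in positive weight (via the cited equivalence of Koszulness with acyclicity of the right twisting composite), and an operadic K\"unneth-type argument propagates this to acyclicity of the associated graded complex. The hypothesis that $A$ and $M$ are bounded below is exactly what makes the induced spectral sequence converge strongly, so this acyclicity lifts to the statement that the augmentation is a quasi-isomorphism. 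The hardest part will be this third step: routing the operadic Koszul acyclicity through the composite with $A$ in $\catE = \dgkMod$ and justifying convergence of the weight spectral sequence is where the essential work lies; combining Steps 1--3 then delivers the desired comparison $H_{\opP}^{\bullet}(A,M) \simeq H^{\bullet} C_{\opP}^{\bullet}(A,M)$.
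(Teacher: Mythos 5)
The paper gives no proof of this statement --- it is imported as \cite[Theorem 12.4.3]{LV} --- and your proposal reconstructs essentially the argument of that cited source: the quasi-free Koszul resolution on generators $\opC \circ A$ with differential twisted by $\kappa$, the adjunction identifying $\Der^{\opP}_A(\opP\circ V,M)\simeq \Hom(V,M)$ so that the derivations complex becomes $C^{\bullet}_{\opP}(A,M)$ on the nose, and the weight-filtration spectral sequence whose strong convergence is exactly where the bounded-below hypothesis enters. The one point to tighten is Step~2: quasi-freeness on $\sg_*$-cofibrant generators alone does not give cofibrancy in the semi-model structure --- you also need the differential to lower an exhaustive filtration of the generators, which here is supplied by the weight grading of $\opC$ since $d_{\kappa}$ strictly decreases it.
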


\begin{eg}
Using Fact \ref{fct:occ=ccc},
one can check that 
for algebras over the classical operads 
in Example \ref{eg:classical-operads},
the operadic (co)chain complex 
$C^{\opP}_\bullet(A)$ (or $C_{\opP}^\bullet(A,M)$) 
is essentially the classical complex calculating the (co)homology.
In other words they are
\begin{enumerate}
\item
the Harrison (co)chain complex for $\opP=\opCom$,
\item
the Hochschild (co)chain complex for $\opP=\opAsc$,
\item
the Chevalley-Eilenberg (co)chain complex for $\opP=\opLie$.
\end{enumerate}
\end{eg}

%%%%%%%%%%%%%%%%%%%%%%%%%%%%%%%%%%%%%%%%%%%%%%%%%%%%%%%%%%%%%%%%%%%%%%
%%%%%%%%%%%%%%%%%%%%%%%%%%%%%%%%%%%%%%%%%%%%%%%%%%%%%%%%%%%%%%%%%%%%%%
\section{Relative homology of operads}
\label{sect:rel-hom}

%%%%%%%%%%%%%%%%%%%%%%%%%%%%%%%%%%%%%%%%%%%%%%%%%%%%%%%%%%%%%%%%%%%%%%
\subsection{Cotriple homology}

Let us briefly recall the theory of cotriple homology 
following \cite[Chapter 8]{W}.
We begin with the introduction of simplicial objects 
and the associated complexes.

We denote by $\Delta$ the category of finite ordered sets 
$[n] := \{0<1<\cdots<n\}$ ($n \in \bbN$)
and non-decreasing monotone maps.

\begin{dfn*}
For a category $\catC$,
a \emph{simplicial object} in $\catC$ means a functor 
$C:\Delta^{\op} \to \catC$.
\end{dfn*}

Let $C$ be a simplicial object in $\catC$.
We set $C_n := C([n])$.
Then the structure of simplicial object 
is uniquely determined by $\{C_n\}_{n\in\bbN}$
together with face operations $\partial_i: C_n \to C_{n-1}$ 
and degeneracy operations $\sigma_i: C_n \to C_{n+1}$ 
for $i=0,1,\ldots,n$
satisfying the simplicial identities 
(see \cite[Proposition 8.1.3]{W}).
%Hereafter we denote simply by $C_{\bullet}$ the simplicial object $C$.

Let $\catA$ be an abelian category.
For a simplicial object $A$ in $\catA$, 
we set 
\[
 C_n(A) := \begin{cases} A_n & (n\ge0) \\ 0 & (n<0)\end{cases}, \quad
 d:=\sum_{i=0}^n(-1)^i \partial_i : C_n(A) \to C_{n-1}(A), 
\]
where $\partial_i:A_n \to A_{n-1}$ is the face operation of $A$.
Then we have $d^2=0$ so that 
$C_{\bullet}(A)=(\{C_n(A)\}_{n\in\bbZ},d)$ is a chain complex in $\catA$.

\begin{dfn*}
The chain complex $C_\bullet(A)$ is called 
the \emph{unnormalized chain complex} of $A$.
\end{dfn*}

Dually we define a \emph{cosimplicial object} in $\catC$ to be 
a functor $\Delta \to \catC$.
For a cosimplicial object $A$ in an abelian category,
setting $A^n := A([n])$,
we have a cochain complex $C^{\bullet}{A}=(\{C^n(A)\}_{n\in\bbZ},d)$
with $C^n(A) = A^n$ and $d:A^n \to A^{n+1}$.

\begin{dfn*}
The cochain complex $C^\bullet(A)$ is called 
the \emph{unnormalized cochain complex} of $A$.
\end{dfn*}

Next we turn to the notion of cotriple and 
a construction of simplicial object from a given cotriple.

\begin{dfn*}
A \emph{cotriple} $(T, \ve, \delta)$ in a category $\catC$ 
consists of a functor $T:\catC \to \catC$,
natural transformations $\ve:T \to \id_{\catC}$ and $\delta:T \to T T$
such that the following diagrams commute.
\begin{align*}
\xymatrix{
 T C \ar[r]^{\delta_C} \ar[d]_{\delta_C} & T T C \ar[d]^{\delta_{T C}} 
 \\
 T T C \ar[r]_{T \delta_{C}} & T T T C
}
\qquad
\xymatrix{
     & T C \ar[ld]_{\id} \ar[d]^{\delta_{C}} \ar[rd]^{\id} \\
 T C & \ar[l]^{T \ve_C} T T C \ar[r]_{\ve_{T C}} & T C
}
\end{align*}
\end{dfn*}

\begin{fct}[{\cite[8.6.4]{W}}]\label{fct:cotriple:simplicial}
Let $(T, \ve, \delta)$ be  a cotriple in a category $\catC$.
For an object $C \in \catC$,
we have a simplicial object $T_{\Delta}C$ in $\catC$ determined by
\[
 (T_{\Delta} C)_n := T^{n+1} C, \quad 
 \partial_i := T^i \ve T^{n-i}, \quad
 \sigma_i := T^i \delta T^{n-i}.
\]
\end{fct}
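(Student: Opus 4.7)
The plan is to verify the simplicial identities for the candidate face maps $\partial_i = T^i \ve T^{n-i}$ (natural transformations $T^{n+1} \Rightarrow T^n$) and degeneracy maps $\sigma_i = T^i \delta T^{n-i}$ (natural transformations $T^{n+1} \Rightarrow T^{n+2}$), evaluated at $C$. Concretely, one must check three families of relations: the face-face identity $\partial_i \partial_j = \partial_{j-1} \partial_i$ for $i<j$; the degeneracy-degeneracy identity $\sigma_i \sigma_j = \sigma_{j+1} \sigma_i$ for $i \le j$; and the mixed relations $\partial_i \sigma_j = \sigma_{j-1}\partial_i$ for $i<j$, $\partial_j \sigma_j = \id = \partial_{j+1}\sigma_j$, and $\partial_i \sigma_j = \sigma_j \partial_{i-1}$ for $i > j+1$. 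All of them follow from the two cotriple axioms (coassociativity and counit) together with the naturality of $\ve$ and $\delta$.

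First I would handle the face-face identity. Writing out $\partial_i \partial_j$ for $i<j$ as a composite, and regarding $T^{j-i-1}\ve_{T^{n-j}C}: T^{n-i}C \to T^{n-i-1}C$ as an ordinary morphism in $\catC$, the naturality square for $\ve$ applied to this morphism and then hit with $T^i$ on the left yields
\[
 T^i \ve_{T^{n-i-1}C} \circ T^j \ve_{T^{n-j}C}
 \deq T^{j-1} \ve_{T^{n-j}C} \circ T^i \ve_{T^{n-i}C},
\]
which is precisely $\partial_i \partial_j = \partial_{j-1} \partial_i$. The degeneracy-degeneracy identity is the exact dual statement: naturality of $\delta$ together with the coassociativity square $\delta_{TC} \circ \delta_C = T\delta_C \circ \delta_C$ in the definition of cotriple yields $\sigma_i \sigma_j = \sigma_{j+1} \sigma_i$ for $i \le j$.

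For the mixed identities, the cases $i<j$ and $i>j+1$ again reduce to naturality alone, since the applications of $\ve$ and $\delta$ occur at disjoint positions among the $n+2$ copies of $T$ and can be interchanged freely. The two remaining cases $\partial_j \sigma_j = \id$ and $\partial_{j+1}\sigma_j = \id$ are obtained by applying $T^j$ on the left and $T^{n-j}$ on the right to the two counit triangles $T\ve_{C'} \circ \delta_{C'} = \id_{TC'}$ and $\ve_{TC'} \circ \delta_{C'} = \id_{TC'}$ at $C' = T^{n-j}C$, which are exactly the two triangles in the definition of a cotriple.

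The main obstacle is purely combinatorial bookkeeping: one must write the natural transformations with explicit component objects and track how an index $i$ shifts to $i-1$ or stays fixed after a previous operator has been applied. Conceptually nothing goes beyond the cotriple axioms and naturality, and this construction is the formal dual of the standard cosimplicial object associated with a monad.
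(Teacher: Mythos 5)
Your verification is correct: all five families of simplicial identities do reduce, exactly as you say, to naturality of $\ve$ and $\delta$ for the disjoint-position cases, to coassociativity for $\sigma_j\sigma_j=\sigma_{j+1}\sigma_j$, and to the two counit triangles (whiskered by $T^j$ on the left and evaluated at $C'=T^{n-j}C$) for $\partial_j\sigma_j=\id=\partial_{j+1}\sigma_j$. The paper gives no proof of its own---it cites Weibel [8.6.4]---and your argument is essentially the standard verification found there, so there is nothing to reconcile.
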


Let $(T, \ve, \delta)$ be a cotriple in a category $\catC$, 
and $E: \catC \to \catM$ be a functor to an abelian category $\catM$.
Take an object $A \in \catC$.
Then the image $E(T_{\Delta}A)$
%of the simplicial object $T_{\Delta}A$ under the functor $E$ 
is a simplicial object in $\catM$,
since a simplicial object is defined to be a contravariant functor.
Thus we have the unnormalized chain complex $C_{\bullet}(E(T_{\Delta}A))$. 

\begin{dfn}\label{dfn:cotriple:homology}
The \emph{cotriple homology} $H_n(A,E)$ of $A \in \catC$ 
with coefficients in a functor $E:\catC \to \catM$ is defined to be
the homology of $C_{\bullet}(E(T_{\Delta}A))$:
\[
 H_n(A,E) := H_n C_{\bullet}(E(T_{\Delta}A)).
\]
\end{dfn}

Dually, given a functor $F: \catC^{\op} \to \catM$ to an abelian category $\catM$,
we have a cosimplicial object $F(T_{\Delta}A)$ in $\catM$,
so that we have the unnormalized cochain complex $C^{\bullet}(F(T_{\Delta}A))$. 
It is natural to introduce

\begin{dfn}\label{dfn:cotriple:cohomology}
The \emph{cotriple cohomology} $H^n(A,F)$ of $A \in \catC$ 
with coefficients in a functor $F:\catC^{\op} \to \catM$ is defined to be
the homology of $C^{\bullet}(F(T_{\Delta}A))$:
\[
 H^n(A,F) := H^n C^{\bullet}(F(T_{\Delta}A)).
\]
\end{dfn}

Finally we recall a construct of cotriple from a given
adjoint pair of functors, following \cite[8.6.2]{W}.
Let 
\[
 F: \catB \leftrightarrows \catC :U
\]
be a adjoint pair of functors.
Set 
\[
 T := F U: \, \catC \longto \catC.
\]
The adjunction gives natural transformations 
\[
 \ve: T= F U \to \id_{\catC}, \quad \eta: \id_{\catB} \to U F.
\]
%The former gives the $\ve$ in $(F U, \ve,\delta)$.
Also we define a natural transformation $\delta:T \to T T$ to be 
\[
 \delta_C := F(\eta_{U C}): \, F(U C) \longto F(U F U C)
\]
for $C \in \catC$.

\begin{fct}\label{fct:cotriple:adj}
The obtained data $(F U, \ve, \delta)$ is a cotriple in $\catC$.
\end{fct}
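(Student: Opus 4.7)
The plan is to verify directly the three defining diagrams of a cotriple, using only the naturality of the unit $\eta$ and counit $\ve$ of the adjunction, together with the two triangle identities $\ve F \circ F \eta = \id_F$ and $U \ve \circ \eta U = \id_U$.

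First I would handle the two counit axioms, which reduce immediately to the triangle identities. The right counit axiom requires $\ve_{TC} \circ \delta_C = \id_{TC}$, i.e.\ $\ve_{FUC} \circ F(\eta_{UC}) = \id_{FUC}$; this is precisely the triangle identity $\ve F \circ F\eta = \id_F$ evaluated at the object $UC \in \catB$. The left counit axiom requires $T(\ve_C) \circ \delta_C = \id_{TC}$, i.e.\ $FU(\ve_C) \circ F(\eta_{UC}) = \id_{FUC}$. Since $F$ is a functor this equals $F\bigl(U(\ve_C) \circ \eta_{UC}\bigr)$, and the inside is the other triangle identity $U\ve \circ \eta U = \id_U$ evaluated at $C$, hence equals $F(\id_{UC}) = \id_{FUC}$.

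Second, I would establish the coassociativity condition $T(\delta_C) \circ \delta_C = \delta_{TC} \circ \delta_C$. Unwinding, the left-hand side is $FUF(\eta_{UC}) \circ F(\eta_{UC}) = F\bigl(UF(\eta_{UC}) \circ \eta_{UC}\bigr)$, while the right-hand side is $F(\eta_{UFUC}) \circ F(\eta_{UC}) = F\bigl(\eta_{UFUC} \circ \eta_{UC}\bigr)$. So it suffices to show
\[
 UF(\eta_{UC}) \circ \eta_{UC} \deq \eta_{UFUC} \circ \eta_{UC}
\]
in $\catB$, but this is exactly the naturality square for $\eta : \id_\catB \to UF$ applied to the morphism $\eta_{UC}: UC \to UFUC$.

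None of the three steps involves real obstacles; the whole verification is formal manipulation with the adjunction data. The only place where one needs to be careful is in correctly identifying which triangle identity appears in each counit axiom (the two look similar but act on different objects) and in keeping track of the direction of naturality in the coassociativity check; otherwise the argument is a direct diagram chase.
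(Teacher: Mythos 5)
Your verification is correct and is exactly the standard argument: the two counit axioms are the two triangle identities of the adjunction (applied at $UC$ and at $C$ respectively), and coassociativity is the naturality square of $\eta:\id_{\catB}\to UF$ at the morphism $\eta_{UC}$. The paper itself gives no proof, deferring to \cite[8.6.2]{W}, and the proof there is this same diagram chase, so your proposal matches the intended argument.
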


%%%%%%%%%%%%%%%%%%%%%%%%%%%%%%%%%%%%%%%%%%%%%%%%%%%%%%%%%%%%%%%%%%%%%%
\subsection{Relative homology of algebras over operads}

Let $\catC$ and $\catE$ be symmetric monoidal categories 
as in \S\ref{sect:operad} 
and $\opP$ be an operad in $\catC$.
Recall that ${}_{\opP}{\catE}$ denotes the category of $\opP$-algebras 
in $\catE$ (see Definition \ref{dfn:P-alg:hom}).

%\begin{dfn*}
%Let $f:B \to A$ be a morphism in ${}_{\opP} \catE$.
%The category ${}_{\opP}\catE_f$ of \emph{$\opP$-algebras over $f$} 
%consists of commuting diagrams
%\[
% \xymatrix{
%  & A' \ar[d]^{\varphi} \\ B \ar[r]_{f} \ar[ru]^{f'} & A
% }
%\]
%in ${}_{\opP}\catE$ with morphisms defined naturally.
%We denote such an object by $(A',\varphi,f')$ or simply by $A'$.
%\end{dfn*}

Let us fix $f: B \to A$ in ${}_{\opP}\catE$ for a while.
Recall that by Fact \ref{fct:f_!-f^*} we have an adjoint pair 
%left adjoint 
\[
 f_!: \, \cMod{\opP}{A} \longbij \cMod{\opP}{B} \, :f^*
\]
of functors,
where $f^*$ is the restriction functor and 
$f_!:=U_{\opP}(A) \otimes_{U_{\opP}(B)} -$. 
Now we apply Fact \ref{fct:cotriple:adj} to this adjoint pair,
and get a cotriple $(f_!f^*,\ve,\delta)$ in $\cMod{\opP}{A}$.
Then applying Fact \ref{fct:cotriple:simplicial} to this cotriple
we have a simplicial object
$\BD{\opP}(A,B,M)$ in $\cMod{\opP}{A}$ for $M \in \cMod{\opP}{A}$ with
\[
 \BD{\opP}(A,B,M)_n = (f_!f^*)^{n+1}M.
\]
As a left $U_{\opP}(A)$-module we have
\[
 \BD{\opP}(A,B,M)_n = 
  (U_{\opP}(A) \otimes_{U_{\opP}(B)})^{n+1} M,
\]
where we omitted the restriction functor $f^*$.
In particular, taking $M=A$,
we have a simplicial left $U_{\opP}(A)$-module $\BD{\opP}(A,B,A)$.

We now assume $\catE=\dgkMod$ and consider the functors 
\[
 M \otimes_{U_{\opP}(A)} -: \, \cMod{\opP}{A} \longto \cMod{\opP}{A}
\]
and 
\[
 \Hom_{U_{\opP}(A)}(-,N): \, 
 \bigl(\cMod{\opP}{A}\bigr)^{\op} \longto \cMod{\opP}{A},
\] 
where $M$ (resp.\ $N$) is a right (resp.\ left) 
$U_{\opP}(A)$-module in $\catE$.
Then applying Definition 
\ref{dfn:cotriple:homology}, \ref{dfn:cotriple:cohomology} to 
these functors and the simplicial object $\BD{\opP}(A,B,A)$ 
in $\cMod{\opP}{A}$,
we obtain the unnormalized (co)chain complex.
It gives the notion of relative (co)homology for algebras over operads.

\begin{dfn}\label{dfn:homology:relative}
Let $f:B \to A$ be a morphism in ${}_{\opP}\catE$ with $\catE=\dgkMod$.
\begin{enumerate}
\item 
For a right $U_{\opP}(A)$-module $M$ in $\catE$, we define 
\emph{the relative homology of $A$ over $B$ with coefficient in $M$} to be 
\[
 H_n(A,B,M) := 
 H_n C_{\bullet}(M \otimes_{U_{\opP}(A)} \BD{\opP}(A,B,A)).
\]

\item 
For a left $U_{\opP}(A)$-module
(or equivalently, an $A$-module) $N$ in $\catE$, we define 
\emph{the relative cohomology of $A$ over $B$ with coefficient in $N$} to be 
\[
 H^n(A,B,N) := 
 H^n C^{\bullet}\Hom_{U_{\opP}(A)}(\BD{\opP}(A,B,A),N).
\]
\end{enumerate}
\end{dfn}

The following statement is a relative version of \cite[13.3.4 Theorem]{Fr},
which claims that the cotriple homology coincides with 
the homology of algebras over operads in Definition \ref{dfn:homology}.

\begin{thm}
Let $\opP$ be a $\sg_*$-cofibrant operad in $\catC=\dgkMod$,
and $A$ be a $\opP$-algebra in $\catE=\dgkMod$.
Then the relative (co)homology in Definition \ref{dfn:homology:relative}
with $B=\bbk$ the trivial $\opP$-algebra
is isomorphic to the (co)homology in Definition \ref{dfn:homology}.
In other words, we have
\[
 H_n(A,M) = 
 H_n C_{\bullet}\bigl(M \otimes_{U_{\opP}(A)}\BD{\opP}(A,\bbk,A)\bigr)
\]
and 
\[
 H^n(A,N) = 
 H^n C^{\bullet}\Hom_{U_{\opP}(A)}\bigl(\BD{\opP}(A,\bbk,A),N\bigr).
\]
\end{thm}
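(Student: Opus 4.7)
The plan is to identify $\BD{\opP}(A,\bbk,A)$ with the classical two-sided bar resolution of $A$ as a left $U_{\opP}(A)$-module, verify that it is a cofibrant (semifree) resolution in the dg sense, and then combine this with \cite[13.3.4 Theorem]{Fr} to conclude.

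First, I would unfold the cotriple associated to $f: \bbk \to A$. Since $\bbk$ is the trivial $\opP$-algebra, $U_{\opP}(\bbk) = \bbk$, so the restriction functor $f^*: \cMod{\opP}{A} \to \dgkMod$ is the forgetful functor and its left adjoint is $f_! = U_{\opP}(A) \otimes_{\bbk} -$. Iterating the cotriple $f_!f^*$ on $A$ yields
\[
 \BD{\opP}(A,\bbk,A)_n \simeq U_{\opP}(A)^{\otimes (n+1)} \otimes_{\bbk} A
\]
as a left $U_{\opP}(A)$-module, with face maps $\partial_i$ given by multiplication of adjacent tensor factors (the rightmost one using the action $\mu_A$) and degeneracies $\sigma_i$ inserting the unit of $U_{\opP}(A)$. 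Consequently the associated unnormalized chain complex $C_\bullet(\BD{\opP}(A,\bbk,A))$, together with the augmentation $U_{\opP}(A) \otimes A \to A$ induced by $\mu_A$, is precisely the classical two-sided bar complex $B(U_{\opP}(A),U_{\opP}(A),A)$.

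Second, the standard extra degeneracy provides a contracting homotopy of this augmented complex after restriction of scalars to $\bbk$, hence it is acyclic; and as each term is $U_{\opP}(A)$-free up to a $\bbk$-flat factor, the augmented bar complex is a semifree resolution of $A$ as a dg left $U_{\opP}(A)$-module. The $\sg_*$-cofibrancy hypothesis on $\opP$ is used here to ensure that $U_{\opP}(A)$ admits a sensible model category of dg modules via the semi-model structure of \cite[Ch.\ 11]{Fr}. Third, by \cite[13.3.4 Theorem]{Fr}, the operadic (co)homology of Definition \ref{dfn:homology} is nothing but the derived functor of $M \otimes_{U_{\opP}(A)}(-)$, respectively $\Hom_{U_{\opP}(A)}(-,N)$, applied to $A$. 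Therefore any cofibrant resolution of $A$ in dg left $U_{\opP}(A)$-modules (such as $T_{\opP}(Q_A)$ on the one hand, or our bar resolution on the other) computes the same (co)homology, yielding the two displayed isomorphisms.

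The main obstacle will be the verification in Step 2 that $C_\bullet(\BD{\opP}(A,\bbk,A))$ is genuinely a cofibrant object in the relevant dg model category, and the identification of Fresse's non-relative cotriple construction (which lives in ${}_{\opP}\catE$) with the derived tensor product over $U_{\opP}(A)$. For associative algebras over a field these are textbook, but in the present generality one must carefully propagate the $\sg_*$-cofibrancy of $\opP$ through the formation of $U_{\opP}(A)$ and through the enveloping algebra of the cofibrant replacement $Q_A$, invoking Fact \ref{fct:T^1:weq} to bridge the two cofibrant models.
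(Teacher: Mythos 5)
The paper states this theorem without proof (it is only flagged as ``a relative version of \cite[13.3.4 Theorem]{Fr}''), so there is no argument of the author's to compare yours against; judged on its own terms, your Steps 1 and 2 are correct but Step 3 contains a genuine gap. The identification of $\BD{\opP}(A,\bbk,A)$ with the two-sided bar construction on $U_{\opP}(A)$, and the extra-degeneracy argument showing that its unnormalized complex is a semifree resolution of $A$ as a dg left $U_{\opP}(A)$-module, are fine. What they establish is that the relative (co)homology of Definition \ref{dfn:homology:relative} with $B=\bbk$ computes $\mathrm{Tor}^{U_{\opP}(A)}_{\bullet}(M,A)$ and $\mathrm{Ext}^{\bullet}_{U_{\opP}(A)}(A,N)$, where $A$ carries the tautological module structure of Lemma \ref{lem:mod-by-alg}.

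The gap is your assertion that \cite[13.3.4 Theorem]{Fr} identifies the (co)homology of Definition \ref{dfn:homology} with the derived functor of $M\otimes_{U_{\opP}(A)}(-)$, resp.\ $\Hom_{U_{\opP}(A)}(-,N)$, applied to $A$. It does not: Definition \ref{dfn:homology} applies that derived functor to the cotangent object $T_{\opP}(Q_A)=U_{\opP}(A)\otimes_{U_{\opP}(Q_A)}\Omega_{\opP}(Q_A)$, i.e.\ to derived K\"ahler differentials, and Fresse's theorem identifies this with cotriple homology for the \emph{free-$\opP$-algebra} cotriple on ${}_{\opP}\catE$ with the coefficient functor $R\mapsto M\otimes_{U_{\opP}(A)}T_{\opP}(R)$ --- not with a cotriple on $\cMod{\opP}{A}$ and a constant coefficient. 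The objects $T_{\opP}(Q_A)$ and ${}_{U_{\opP}(A)}A$ are not quasi-isomorphic in general, so the two derived functors genuinely differ. Concretely, take $\opP=\opLie$ over a field of characteristic $0$ and $\frkg=\bbk x$ the one-dimensional abelian Lie algebra: $\frkg$ is already cofibrant (free on one generator), $\Omega_{\opLie}(\frkg)\simeq U_{\opLie}(\frkg)=\bbk[x]$ is free of rank one, hence $H^{\opLie}_{n}(\frkg,\bbk)=0$ for $n\ge 1$; whereas the adjoint action on $\frkg$ is trivial, so $\mathrm{Tor}^{\bbk[x]}_{1}(\bbk,\frkg)=\bbk\neq 0$. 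Thus the displayed identities cannot be reached by your route, and with the paper's definitions read at face value (as you and I both read them, with $U_{\opP}(\bbk)=\bbk$) they in fact appear to fail. A correct argument would have to replace the module-level resolution of $A$ by a simplicial resolution of $A$ in ${}_{\opP}\catE$ to which one then applies $T_{\opP}$, or else the statement itself needs to be modified; the ``main obstacle'' you flag at the end is not a technical verification but the precise point at which the argument breaks.
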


\section{Operadic semi-infinite cohomology}
\label{sect:std}

In this section we introduce the main ingredient of this note.
Let $\bbk$ be a field.
We set %$\catC=\lMod{\bbk}$ and $\catE=\dgkMod$.
$\catC=\catE=\dgkMod$.
The monoidal structure will be denote by $\otimes_{\bbk}$
or by $\otimes$ for simplicity. 
We fix an operad $\opP$ in $\catC$.

%%%%%%%%%%%%%%%%%%%%%%%%%%%%%%%%%%%%%%%%%%%%%%%%%%%%%%%%%%%%%%%%%%%%%%
\subsection{Semi-infinite structure}

Let $A$ be a $\opP$-algebra in  $\catE$.
Recall that $U_{\opP}(A)$ denotes the enveloping algebra of $A$,
which is a dg algebra over $\bbk$ in the present situation.
We denote by $U_{\opP}(A)= \oplus_{n \in \bbZ} U_{\opP}(A)_n$
the grading structure.
Mimicking the notion of semi-infinite structure for associative algebras
in \cite{A2}, we introduce

\begin{dfn}\label{dfn:semi-inf-str}
The \emph{semi-infinite structure} on $A$ is 
the monomorphism
\[
 f: B \longinj A
\]
in $\catE$ such that the following conditions hold.
We set $N := \Coker(f)$.
\begin{enumerate}
\item 
The enveloping algebra $U_{\opP}(A)$ contains 
$U_{\opP}(N)$ and $U_{\opP}(B)$ as graded subalgebras.
\item
$U_{\opP}(N)$ is non-negatively graded,
$U_{\opP}(N)_0=\bbk$ and $\dim_{\bbk} U_{\opP}(N)_n < \infty$.
\item
$U_{\opP}(B)$ is non-positively graded.
\item
The multiplication in $U_{\opP}(A)$ gives isomorphisms of 
graded vector spaces
$U_{\opP}(B) \otimes_{\bbk} U_{\opP}(N) \simto U_{\opP}(A)$ and 
$U_{\opP}(N) \otimes_{\bbk} U_{\opP}(B) \simto U_{\opP}(A)$.
\item
The two linear isomorphisms in the previous item 
are continuous in the following sense.
Let 
$\varphi:
 U_{\opP}(B) \otimes U_{\opP}(N) \simto 
 U_{\opP}(N) \otimes U_{\opP}(B)$
be the composition of the isomorphisms. 
Then that for every $m,n \in \bbZ$ there exist 
$k_+,k_- \in \bbN$ such that 
$\varphi \bigl(U_{\opP}(B)_m \otimes U_{\opP}(N)_n \bigr) 
 \subset \oplus_{k_- \le k \le k_+} 
 U_{\opP}(N)_{n-k} \otimes U_{\opP}(B)_{m+k}$.
We demand the same condition for the other composition 
$U_{\opP}(N) \otimes U_{\opP}(B) \simto 
 U_{\opP}(B) \otimes U_{\opP}(N)$.
\end{enumerate}
\end{dfn}

\begin{eg*}
Recall that enveloping algebras for $\opP=\opAsc,\opLie$
are nothing but the classical enveloping algebras
by Example \ref{eg:env-alg}.
Then one can immediately check that 
\begin{enumerate}
\item
The semi-infinite structure of an associative algebra 
given in \cite{A2} gives a semi-infinite structure in our sense
for $\opP=\opAsc$.
\item
The semi-infinite structure of a Lie algebra 
given in \cite{V1} gives a semi-infinite structure 
for $\opP=\opLie$.
\end{enumerate}
\end{eg*}

\subsection{Standard semi-injective resolution}

Let $A$ be an $\opP$-algebra in $\catE$ 
with a semi-infinite structure $B \inj A$. 
%satisfying Assumption \ref{asmp:S=S'}.
We will use the symbol $N:=\Coker(B \inj A)$.

\begin{dfn*}
For an $A$-module (or a left $U_{\opP}(A)$-module) 
$M$  in $\catE$, we set
\[
 \BD{\opP,\sinf+\bullet}(A,M) := 
 \Hom_{\cMod{\opP}{A}}(\BD{\opP}(A,B,A),M) \otimes_{U_{\opP}(A)}
 \BD{\opP}(A,N,A).
\]
The \emph{operadic semi-infinite homology} of $A$ with coefficient in $M$ 
is defined to be the homology of the complex 
$\BD{\opP,\sinf+\bullet}(A,M)$.
\end{dfn*}

By the argument for associative algebras 
in \cite[Proposition 2.6.3]{Se},
we have

\begin{thm}
$\BD{\sinf+\bullet}(A,M)$ is a semi-injective resolution of $M$,
In other words, it is a complex of $A$-modules satisfying
\begin{enumerate}
\item 
$K$-injective as a complex of $N$-modules, 
\item
$K$-projective relative to $N$.
\end{enumerate}
\end{thm}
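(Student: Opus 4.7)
The plan is to adapt Sevostyanov's argument for associative algebras \cite[Proposition~2.6.3]{Se} to the operadic setting, substituting $U_{\opP}(A)$ for the classical enveloping algebra throughout and letting Fact~\ref{fct:f_!-f^*} supply the required adjunctions. First I would unpack $\BD{\opP,\sinf+\bullet}(A,M)$ as the totalization of a double complex indexed by $(p,q)$, with $p$ coming from the $\Hom$ factor and $q$ from the bar factor. Iteratively using the adjunction $f_!\dashv f^*$ along $B\inj A$ and along $N\inj A$, together with the semi-infinite factorization $U_{\opP}(A)\simeq U_{\opP}(B)\otimes_{\bbk}U_{\opP}(N)\simeq U_{\opP}(N)\otimes_{\bbk}U_{\opP}(B)$ of Definition~\ref{dfn:semi-inf-str}(4)--(5), each $(p,q)$ bidegree rewrites as a hybrid object built from tensor powers of $U_{\opP}(N)$, values in $M$, and tensor powers of $U_{\opP}(B)$, interpreted through the completion that the continuity condition~(5) dictates.

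For property (1), $K$-injectivity as a complex of $N$-modules, I would argue termwise. Each summand $\Hom_{\cMod{\opP}{A}}(\BD{\opP}(A,B,A)_p,M)$ simplifies, via $f_!\dashv f^*$ along $B\inj A$ and the bimodule factorization $U_{\opP}(A)\simeq U_{\opP}(N)\otimes_{\bbk}U_{\opP}(B)$ (condition~(4)), into an object of shape $\Hom_{\bbk}(U_{\opP}(N)^{\otimes p},-)$, which is a coinduced $U_{\opP}(N)$-module. Coinduction along the trivial inclusion $\bbk\inj U_{\opP}(N)$ lands in genuine $K$-injectives precisely because $U_{\opP}(N)$ is non-negatively graded with finite-dimensional graded pieces (condition~(2)). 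The balanced tensor against $\BD{\opP}(A,N,A)$ over $U_{\opP}(A)$ preserves $K$-injectivity over $U_{\opP}(N)$ since the complementary factor is, in each bidegree, $U_{\opP}(A)$-projective.

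For property (2), $K$-projectivity relative to $N$, I would again argue termwise. Each summand $\BD{\opP}(A,N,A)_q=(U_{\opP}(A)\otimes_{U_{\opP}(N)})^{q+1}A$ is $U_{\opP}(N)$-free in every degree, giving the termwise relative projectivity directly from the construction. The $\Hom$ factor is, in each degree, $U_{\opP}(A)$-coinduced by the computation of Step~2; hence after the $U_{\opP}(A)$-balanced tensor the resulting total complex inherits the relative projectivity property over $U_{\opP}(N)$. Assembling through the totalization requires one to verify that the filtrations arising from the two gradings remain compatible with both properties, which in turn reduces to checking that the termwise $K$-injective (respectively $N$-free) structure survives the completion.

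The main obstacle will be the continuity condition~(5), which is what makes the $U_{\opP}(A)$-balanced tensor product of the two infinitely generated factors well defined at all. One must construct the correct completion of the totalization, verify that the relevant spectral sequences converge strongly in the resulting topological $\dgkMod$, and check that coinduction along $\bbk\inj U_{\opP}(N)$ genuinely produces $K$-injective objects once that completion has been taken, rather than a merely formal analogue. Condition~(5) is engineered precisely to control these points, but the careful bookkeeping of gradings and filtrations across every adjunction — the step where the operadic machinery substitutes for the associative-algebra arguments of \cite{Se} — will be the technical core of the proof.
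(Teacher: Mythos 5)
Your proposal follows exactly the route the paper takes: the paper offers no argument beyond the single line ``By the argument for associative algebras in \cite[Proposition 2.6.3]{Se}, we have,'' so adapting Sevostyanov's Proposition 2.6.3 with $U_{\opP}(A)$ in place of the classical enveloping algebra is precisely the paper's (implicit) proof. Your sketch in fact records considerably more of the necessary detail---the bicomplex decomposition, the use of the factorization and continuity conditions of Definition~\ref{dfn:semi-inf-str}, and the completion issues---than the paper itself does.
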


By construction, setting $\opP=\opAsc$ or $\opP=\opLie$,
we recover the complexes appearing in the literature \cite{Fe,V1,A2,Se}.
In particular we have 

\begin{thm}
For the classical operad $\op=\opAsc$ or $\opP=\opLie$, 
the homology of the complex 
$\BD{\opP,\sinf+\bullet}(A,M)$ gives 
the semi-infinite cohomology of $A$ with coefficient in $M$
in the literature.
\end{thm}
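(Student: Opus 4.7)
The approach is to unwind the definition of $\BD{\opP,\sinf+\bullet}(A,M)$ in each classical case and compare term by term with the complexes of \cite{A2} and \cite{V1}. The key ingredient is the cotriple identification: applying Fact~\ref{fct:cotriple:adj} to the adjoint pair $(f_!,f^*)$ of Fact~\ref{fct:f_!-f^*} gives $f_!f^* = U_{\opP}(A)\otimes_{U_{\opP}(B)}-$, so by Fact~\ref{fct:cotriple:simplicial} the $n$th term of the simplicial object $\BD{\opP}(A,B,A)$ is
\[
 \underbrace{U_{\opP}(A)\otimes_{U_{\opP}(B)}\cdots\otimes_{U_{\opP}(B)}U_{\opP}(A)}_{(n+1)\text{ copies}},
\]
which is the standard relative bar resolution of $A$ over $U_{\opP}(B)$. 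Analogously $\BD{\opP}(A,N,A)$ is the standard relative bar resolution over $U_{\opP}(N)$. By Example~\ref{eg:env-alg}, $U_{\opAsc}(A)=A_+\otimes A_+^{\op}$ and $U_{\opLie}(\frkg)=U(\frkg)$, so these are exactly the resolutions used to build the classical semi-infinite complexes in \cite{Fe,V1,A2}.

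Inserting this identification into the definition
\[
 \BD{\opP,\sinf+\bullet}(A,M)
 =\Hom_{\cMod{\opP}{A}}(\BD{\opP}(A,B,A),M)\otimes_{U_{\opP}(A)}\BD{\opP}(A,N,A)
\]
realises it as a bicomplex whose $(p,q)$-term is isomorphic, via the linear splitting $U_{\opP}(A)\simeq U_{\opP}(N)\otimes U_{\opP}(B)$ from Definition~\ref{dfn:semi-inf-str}(4), to a space of the schematic form
\[
 \Hom_{U_{\opP}(B)}\bigl(U_{\opP}(A)^{\otimes p},M\bigr)
 \otimes_{U_{\opP}(N)}U_{\opP}(A)^{\otimes q}.
\]
For $\opP=\opAsc$ one recognises Arkhipov's semi-infinite standard complex of \cite{A2}, while for $\opP=\opLie$ one recognises the Feigin--Voronov standard complex of \cite{Fe,V1}, combining a Chevalley--Eilenberg-type cochain construction along $B$ with a Chevalley--Eilenberg-type chain construction along $N$.

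The main obstacle will be the continuity and topology bookkeeping: the classical semi-infinite complexes involve restricted duals and completed tensor products so that the $\Hom$-factor is spanned by sums of finite support along one direction, and one must verify that condition (5) of Definition~\ref{dfn:semi-inf-str} is precisely what turns the formal identification above into an isomorphism of bicomplexes. To close the argument one invokes the previous theorem: $\BD{\opP,\sinf+\bullet}(A,M)$ is a semi-injective resolution of $M$, namely $K$-injective over $N$ and $K$-projective relative to $N$, and the classical semi-infinite standard complexes of \cite{A2,V1} are characterised by the very same semi-injectivity property, so both compute the same derived functor and hence have isomorphic cohomology. In the associative case one can import the argument of \cite[Proposition 2.6.3]{Se} essentially verbatim; in the Lie case one runs the analogous argument with $U_{\opLie}(A)$ and $U_{\opLie}(B)$ in place of the associative enveloping algebra.
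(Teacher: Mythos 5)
The paper offers no argument for this theorem beyond the sentence immediately preceding it (``By construction, setting $\opP=\opAsc$ or $\opP=\opLie$, we recover the complexes appearing in the literature''), so there is no detailed proof to compare against. Your proposal follows exactly the route the paper intends --- unwind the cotriple construction to exhibit $\BD{\opP}(A,B,A)$ and $\BD{\opP}(A,N,A)$ as relative two-sided bar resolutions over $U_{\opP}(B)$ and $U_{\opP}(N)$, use Example~\ref{eg:env-alg} to identify $U_{\opAsc}(A)$ and $U_{\opLie}(\frkg)$ with the classical enveloping algebras, and match the resulting bicomplex against the standard semi-infinite complexes of Arkhipov and Feigin--Voronov --- and it is considerably more explicit than what the paper provides. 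The one genuinely different ingredient is your closing appeal to the characterisation of semi-infinite cohomology through semi-injective (semijective) resolutions, so that the preceding theorem lets you conclude by uniqueness of the derived functor rather than by a literal term-by-term isomorphism; the paper does not spell this out, but it is arguably the cleanest way to finish and correctly exploits the structure already established. Be aware, however, that the two steps you yourself flag --- the recognition ``one recognises Arkhipov's complex'' and the verification that condition (5) of Definition~\ref{dfn:semi-inf-str} is what makes the formal identification compatible with the restricted duals and completed tensor products in the classical constructions --- are precisely where all the substance lies, and neither your outline nor the paper actually carries them out; as written both remain proof sketches rather than proofs.
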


%%%%%%%%%%%%%%%%%%%%%%%%%%%%%%%%%%%%%%%%%%%%%%%%%%%%%%%%%%%%%%%%%%%%%%
%%%%%%%%%%%%%%%%%%%%%%%%%%%%%%%%%%%%%%%%%%%%%%%%%%%%%%%%%%%%%%%%%%%%%%

\end{document}